\newcommand{\CC}{\mathbb{C}}
\newcommand{\SSS}{\mathbb{S}}
\newcommand{\QQ}{\mathbb{Q}}
\newcommand{\KK}{\mathbb{K}}
\newcommand{\ZZ}{\mathbb{Z}}
\newcommand{\RR}{\mathbb{R}}
\newcommand{\tC}{\mathrm{C}}
\newcommand{\cC}{\mathcal{C}}
\newcommand{\cD}{\mathcal{D}}
\newcommand{\cE}{\mathcal{E}}
\newcommand{\cF}{\mathcal{F}}
\newcommand{\cL}{\mathcal{L}}
\newcommand{\cM}{\mathcal{M}}
\newcommand{\cN}{\mathcal{N}}
\newcommand{\cS}{\mathcal{S}}
\newcommand{\cW}{\mathcal{W}}
\newcommand{\ku}{\mathtt{ku}}
\newcommand{\subq}[3]{\SSS^{#1, #2}_{#3}}
\def\Csep{\mathtt{SC^*}}
\def\FCW{\mathtt{CW}^f_*}
\def\CW{\mathtt{CW}_*}
\def\NC{\mathtt{NC}}
\def\NCW{\mathtt{NCW}}
\def\Sp{\mathtt{Sp}}
\def\SpM{\mathtt{Sp^M}}
\def\Cat{\mathtt{Cat}}
\def\NSp{\mathtt{NSp}}
\def\FNCW{\mathtt{NCW}^f}
\newcommand{\Fin}{\mathtt{Fin}_*}
\def\Epi{\mathtt{Epi}}
\def\Finsk{\mathtt{Fin^{\leq k}_*}}
\def\Fins1{\mathtt{Fin^{\leq 1}_*}}
\def\Gred{\mathfrak{G}}
\def\Nat{\mathrm{Nat}}
\def\Chi{\mathcal{X}}
\newtheorem{thm}{Theorem}[section]
\newtheorem{cor}[thm]{Corollary}
\newtheorem{lem}[thm]{Lemma}
\newtheorem{prop}[thm]{Proposition}
\newtheorem{example}{Example}
\theoremstyle{definition}
\newtheorem{define}[thm]{Definition}
\theoremstyle{remark}
\newtheorem{rem}[thm]{Remark}
\DeclareMathOperator{\im}{Im}
\DeclareMathOperator{\Fun}{Fun}
\DeclareMathOperator{\op}{op}
\DeclareMathOperator{\Inj}{Inj}
\DeclareMathOperator{\Map}{Map}
\DeclareMathOperator{\Hom}{Hom}
\DeclareMathOperator{\prehocolim}{hocolim}
\DeclareMathOperator{\precolim}{colim}
\DeclareMathOperator{\uTop}{Top_u}
\DeclareMathOperator{\Top}{Top}
\def\colim{\mathop{\precolim}}
\def\hocolim{\mathop{\prehocolim}}
\def\End{\textrm{End}}
\def\im{\textrm{Im\,}}
\DeclareTextFontCommand{\textcyr}{\fontencoding{OT2}\fontfamily{wncyr}\fontseries{m}\fontshape{n}\selectfont}
\newcommand\noloc{%
  \nobreak
  \mspace{6mu plus 1mu}
  {:}
  \nonscript\mkern-\thinmuskip
  \mathpunct{}
  \mspace{2mu}
}
\begin{document}
\title{Suspension spectra of matrix algebras, the rank filtration, and rational noncommutative CW-spectra}

\author{Gregory Arone \thanks{Supported in part by the Swedish Research Council, grant number 2016-05440} \\ Stockholm University \\ gregory.arone@math.su.se \and Ilan Barnea \thanks{Supported by ISF 786/19} \\ Haifa University \\ ilanbarnea770@gmail.com \and Tomer M. Schlank \thanks{Supported by ISF 1588/18 and BSF 2018389} \\ Hebrew University \\tomer.schlank@gmail.com}

\maketitle

\begin{abstract} In a companion paper~\cite{ABS1} we introduced the stable $\infty$-category of noncommutative CW-spectra, which we denoted $\NSp$.  Let $\cM$ denote the full spectrally enriched subcategory of $\NSp$ whose objects are the non-commutative suspension spectra of matrix algebras. In~\cite{ABS1} we proved that $\NSp$ is equivalent to the $\infty$-category of spectral presheaves on $\cM$.  In this paper we investigate the structure of $\cM$, and derive some consequences regarding the structure of $\NSp$.

To begin with, we introduce a rank filtration of $\cM$. We show that the mapping spectra of $\cM$ map naturally to the connective $K$-theory spectrum $ku$, and that the rank filtration of $\cM$ is a lift of the classical rank filtration of $ku$. We describe the subquotients of the rank filtration in terms of complexes of direct-sum decompositions which also arose in the study of $K$-theory and of Weiss's orthogonal calculus. We prove that the rank filtration stabilizes rationally after the first stage. Using this we give an explicit model of the rationalization of $\NSp$ as presheaves of rational spectra on the category of finite-dimensional Hilbert spaces and unitary transformations up to scaling.
Our results also have consequences for the $p$-localization and the chromatic localization of $\cM$.
\end{abstract}

\tableofcontents

\section{Introduction}
In our previous paper~\cite{ABS1} we introduced the $\infty$-category of noncommutative CW-spectra, which we denoted $\NSp$. This is the stabilization of the $\infty$-category of noncommutative CW-complexes, denoted $\NCW$, which in turn is the ind completion of the $\infty$-category of finite noncommutative CW-complexes, denoted $\FNCW$. The latter is defined as \textbf{opposite} of the topological nerve of the topological category whose objects are the $C^*$-algebras which are noncommutative CW-complexes in the sense of \cite{ELP} and whose hom-spaces are given by taking the topology of pointwise norm convergence on the sets of $*$-homomorphisms.

The main result of \cite{ABS1} says that $\NSp$ is equivalent to the $\infty$-category of spectral presheaves over a small spectrally enriched $\infty$-category $\cM$. The spectral $\infty$-category $\cM$ is defined to be the full spectral subcategory of $\NSp$, whose objects are noncommutative suspension spectra of matrix algebras. In this paper we analyze the category $\cM$ in considerable detail. We introduce a rank filtration of $\cM$, describe the subquotients of the rank filtration and use this to give an explicit model for the rationalization of $\NSp$.

\begin{rem}
  In \cite{ABS1} we made extensive use of Hinich's theory of enriched $\infty$-categories (see \cite{Hin2,Hin3}). In this paper we also use this theory and terminology on a few occasions. The interested reader is referred also to \cite[Section 3]{ABS1} for a summery of the parts of this theory relevant to us.
\end{rem}

Given an integer $k\geq 1$, let $M_k$ be the $C^*$-algebra of $k\times k$ matrices over $\CC$. We have the stabilization functor
$$\Sigma^\infty_{\NC}:\NCW\to\NSp$$
and the set of objects of $\cM$ is $\{\Sigma^\infty_{\NC}M_k|\,k\geq 1\}$. Thus the objects of $\cM$ are in one to one correspondence with the positive integers. Given two integers $k, l$, we denote the spectral mapping object in $\cM$ by
$$\SSS^{k,l}:=\Hom_{\cM}(\Sigma^\infty_{\NC} M_k,\Sigma^\infty_{\NC} M_l)\simeq \Hom_{\NSp}(\Sigma^\infty_{\NC} M_k,\Sigma^\infty_{\NC} M_l)\in\Sp,$$
where $\Sp$ is the $\infty$-category of (ordinary) spectra.

Our goal in this paper is to analyze the category $\cM$. For this, it will be convenient to use a model category that models the $\infty$-category of spectra $\Sp$. Analyzing $\cM$ means, firstly, that we want to describe, for each $k$ and $l$, the homotopy type of $\SSS^{k,l}$. For this purpose it is adequate to use the simple model for spectra, as a sequence of pointed spaces with structure maps. But we also want to model the composition maps,
\begin{equation}\label{eq: stable composition}
\SSS^{k,l}\wedge \SSS^{j,k}\to \SSS^{j,l}.
\end{equation}
To do this properly, we need to use a more sophisticated model for spectra, which incorporates a smash product.

To be more explicit, in \cite{ABS1} we defined a ``strict" model of $\cM$. That is, we defined a category $\cM_s$ strictly enriched in a certain monoidal model category of spectra $\SpM$, such that the enriched $\infty$-localization of $\cM_s$ is equivalent to $\cM$.
The model category $\SpM$ is the category of pointed continuous functors from pointed finite CW-complexes to pointed topological spaces, endowed with the stable model structure. (By a topological space in this paper we always mean a compactly generated weak Hausdorff space.)
Day convolution turns $\SpM$ into a symmetric monoidal model category. See~\cite{Lyd, MMSS, Lyd1} for more details on this model structure.

Recall that any relative category, that is a pair $(\cC,\cW)$ consisting of a category $\cC$ an a subcategory $\cW\subseteq\cC$, has a canonically associated $\infty$-category $\cC_\infty$, obtained by formally inverting the morphisms in $\cW$ in the infinity categorical sense. There is also a canonical localization functor $\cC\to\cC_\infty$ satisfying a universal property. We refer the reader to \cite{Hin1} for a thorough account, and also to the discussion in \cite[Section 2.2]{BHH}. We refer to $\cC_\infty$ as the $\infty$-localization of $\cC$ (with respect to $\cW$). If $\cC$ is a model category or a (co)fibration category, we always take $\cW$ to be the set of weak equivalences in $\cC$.

We have a canonical equivalence of symmetric monoidal $\infty$-categories
$$\SpM_\infty\simeq\Sp.$$
We will identify the two $\infty$-categories above through this equivalence.
Similarly, if $\Top$ is the category of pointed topological spaces endowed with the Quillen model structure \cite{Qui}, then we have a canonical equivalence
$$\Top_\infty\simeq\cS_*$$
where $\cS_*$ is the $\infty$-category of pointed spaces. Again, we identify the two $\infty$-categories above through this equivalence.

We denote the localization functor from $\SpM$ to $\SpM_\infty=\Sp$ by $\partial_1$
$$\partial_1:\SpM\to \Sp.$$
If $G\in\SpM$ is a pointed continuous functors from pointed finite CW-complexes to pointed topological spaces, then $\partial_1 G$ is the spectrum corresponding to the sequence of spaces $\{G(S^0), G(S^1), \ldots $\} (where we identify a pointed topological space with its image in $\Top_\infty=\cS_*$). In other words we can write
$$\partial_1 G\simeq
{\hocolim}_n\Sigma^{-n}\Sigma^{\infty}G(S^n),$$
where by $\hocolim$ here we mean $\infty$-colimit in $\Sp$. This is known as the stabilization, or the first derivative of the functor $G$.


The way we define $\cM_s$ in \cite{ABS1} is by letting for every $k, l$
$$\Hom_{\cM_s}(M_k,M_l):=G_{k,l}\in\SpM,$$
where
\[
G_{k,l}(X):=\Csep(\tC_0(X, M_l),M_k)
\]
(see also Definition \ref{def: cM}). Here $\tC_0(X, M_l)$ is the space of pointed map from $X$ to $M_l$, considered as a $C^*$-algebra, and $\Csep(-,-)$ denotes the space of $C^*$-algebra maps with the topology of pointwise norm convergence. Since $\cM_s$ is a model for $\cM$ we have that $G_{k,l}$ is a model for $\SSS^{k,l}$, or in other words, the stabilization of $G_{k,l}$ is $\SSS^{k,l}$:
\[
\SSS^{k,l} \simeq\partial_1G_{k, l}\simeq{\hocolim}_n\Sigma^{-n}\Sigma^{\infty}G_{k, l}(S^n).
\]

\begin{rem}\label{r:M}
  In this paper we use both $\cM$ and $\cM_s$. For convenience of notation we denote both categories by $\cM$ trusting that it is clear from the context which is meant.
\end{rem}

We proceed to investigate the homotopy type of $\SSS^{k,l}$. It is not hard to show that if $l>k$ then $\SSS^{k,l}$ is contractible (see corollary~\ref{cor: l le k}), so we generally assume that $k\ge l$. To analyze $\SSS^{k,l}$ further, we introduce a natural filtration of $\cM$ in Section~\ref{section: stable rank}, which we call the {\it rank filtration}. More precisely, for each $k$ and $l$, we define a sequence of subfunctors $G_{k, l, i}\in\SpM$
\[
G_{k, l, 1}\subset G_{k, l, 2}\subset \cdots \subset G_{k, l, \lfloor\frac{k}{l}\rfloor}= G_{k, l, \lfloor\frac{k}{l}\rfloor+1}=\cdots =G_{k,l}.
\]
Upon stabilization, we obtain a sequence of spectra
\[
\SSS^{k,l,1} \hookrightarrow \SSS^{k,l,2} \hookrightarrow \cdots \hookrightarrow \SSS^{k,l,\lfloor\frac{k}{l}\rfloor}=\SSS^{k,l}.
\]
We think of this sequence as defining a filtration of $\SSS^{k,l}$. This filtration is multiplicatively compatible with composition. This means that there are maps, compatible with~\eqref{eq: stable composition}
\[
\SSS^{k,l,m}\wedge \SSS^{j,k,n}\to \SSS^{j,l,mn}.
\]
One can say that the category $\cM$ is filtered by the multiplicative monoid of positive integers.

The functors $G_{k,l,m}$ are, by definition, functors from the category of pointed finite CW-complexes to pointed topological spaces. But it turns out that they are determined by their restriction to the category of pointed finite sets. Specifically, we show in Theorem \ref{t:polynom inverse}
that $G_{k,l,m}$ is equivalent to both the strict and the derived left Kan extension of its restriction to pointed finite sets.
In other words, $G_{k,l,m}$ are $\Gamma$-spaces.

In Section~\ref{ss: finite} we describe explicitly the restriction of $G_{k, l}$ to pointed finite sets (Proposition~\ref{prop:SC inj Mlt Mk}). In Section~\ref{section: cofibrant} we show that the $\Gamma$-spaces $G_{k,l,m}$ are in fact cofibrant in a generalized Reedy model structure considered by Bousfield-Friedlander~\cite{BF}, Lydakis~\cite{Lyd1} and Berger-Moerdijk~\cite{BM}.

Proposition~\ref{prop:SC inj Mlt Mk} indicates that $G_{k,l}$ is similar to the $\Gamma$-space that models the K-theory spectrum $ku$. This is not a coincidence. The inclusion of matrix algebras $M_k\to M_{k+1}$ that sends a matrix $a$ to $a\oplus 0$ indues a natural transformation $G_{k,l}\to G_{k+1,l}$, which in turn induces a map of spectra $\SSS^{k,l}\to \SSS^{k+1, l}$. We prove in Section~\ref{section: k-theory} that for each fixed $l$, $\hocolim_{k\to\infty} \SSS^{k,l}\simeq ku$ (thus all the mapping spectra $\SSS^{k,l}$ are naturally spectra over $ku$). The case $l=1$ of this observation goes back to Segal~\cite{Se2} and was exploited extensively by Dadarlat with various collaborators (for example see~\cite{DM}). Furthermore, it turns out that the functor $l\mapsto \hocolim_{k\to \infty} \SSS^{k, l}$ takes values in module spectra over $ku$. This allows for a natural way do define bivariant connective $K$-theory on $\NSp$, which generalizes the definition of Dadarlat and McClure \cite{DM} to the noncommutative setting. We also observe in this section that the rank filtration of $\cM$ is a lift of the classical rank filtration of $ku$. Later in the paper we prove that the map $\SSS^{k,l}\to ku$ induces an isomorphism on $\pi_0$ (Lemma~\ref{lem: over ku}).

The results of Sections~\ref{ss: finite} and~\ref{section: cofibrant} are useful for the homotopic analysis of $\SSS^{k,l}$ and of the rank filtration. One of our main results is an explicit description of the associated graded filtration. Thus we describe, for each $k,l,m$, the homotopy cofiber spectrum $\subq{k}{l}{m}:= \SSS^{k,l,m}/\SSS^{k,l,m-1}$, as well as the induced maps
\[
\subq{k}{l}{m}\wedge \subq{j}{k}{n}\to \subq{j}{l}{mn}.
\]
Our description features certain $U(m)$-complexes $\cL_m^\diamond$. Roughly speaking $\cL_m^\diamond$ is the space of direct-sum decompositions of $\CC^m$ (see Definition~\ref{definition: decompositions}). These complexes have some remarkable homotopical properties, which we will review below. But first let us state the result describing the rank filtration in terms of $\cL_m^\diamond$. When $U$ and $V$  are unitary vector spaces, let $\Inj(U,V)$ denote the space of (necessarily injective) linear transformations of $U$ into $V$ that preserve the unitary product. Note for future reference that there is a homeomorphism $\Inj(\CC^m, \CC^n)\cong U(n)/U(n-m)$. The following is Theorem \ref{theorem: main} in the text:
\begin{thm}\label{thm: intro main}
There is an equivalence of spectra
\[
\subq{k}{l}{m}\simeq  \Sigma^\infty \cL_m^\diamond \wedge_{U(m)} \Inj(\CC^{lm}, \CC^k)_+,
\]
where $U(m)$ acts through the identification $\CC^{lm}\cong \CC^{m}\otimes \CC^l$. The composition map $\subq{k}{l}{n}\wedge \subq{j}{k}{m}\to \subq{j}{l}{mn}$ is
determined by the map $\cL_n^\diamond \wedge \cL_m^\diamond \to \cL_{mn}^\diamond$ defined by tensor product of decompositions, and the obvious composition map
\[
\Inj(\CC^{ln}, \CC^k)\times  \Inj(\CC^{km}, \CC^j) \to \Inj(\CC^{lmn}, \CC^{km})\times  \Inj(\CC^{km}, \CC^j) \to \Inj(\CC^{lmn}, \CC^j).
\]
\end{thm}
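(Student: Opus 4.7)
The plan is to carry out the computation at the $\Gamma$-space level and then stabilize. By Theorem~\ref{t:polynom inverse}, $G_{k,l,m}$ is the (derived) left Kan extension of its restriction to $\Fin$, so the subquotient $G_{k,l,m}/G_{k,l,m-1}$ is also a $\Gamma$-space, and by Section~\ref{section: cofibrant} the levelwise quotient is a model for the homotopy cofiber. Moreover, for a cofibrant $\Gamma$-space $F$ with $U(m)$-action and a $U(m)$-space $A$, the stabilization of the $\Gamma$-space $A_+ \wedge_{U(m)} F$ is naturally equivalent to $\Sigma^\infty A_+ \wedge_{U(m)} \partial_1 F$. Therefore it suffices to produce, functorially in $n_+ \in \Fin$, a $U(m)$-equivariant pointed homeomorphism
\[
G_{k,l,m}(n_+)/G_{k,l,m-1}(n_+) \;\cong\; \Inj(\CC^{lm},\CC^k)_+ \wedge_{U(m)} \cL_m^\diamond(n_+).
\]

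To obtain this, I would unwind $G_{k,l}(n_+)$ pointwise. Since $\tC_0(n_+, M_l) \cong M_l^{\times n}$ and $M_l$ is simple, a $*$-homomorphism $M_l^{\times n} \to M_k$ is an $n$-tuple $(\phi_1,\dots,\phi_n)$ of $*$-homomorphisms $\phi_i\colon M_l \to M_k$ with pairwise orthogonal ranges. Each nonzero $\phi_i$ is classified, up to $U(m_i)$, by an isometric embedding $\iota_i\colon \CC^l \otimes \CC^{m_i} \hookrightarrow \CC^k$, where $m_i \geq 0$ is its multiplicity. The rank filtration $G_{k,l,m}(n_+)$ is precisely the locus where $\sum_i m_i \leq m$, and on the top stratum $\sum_i m_i = m$ the combined data amounts, modulo the diagonal $U(m)$-action, to a single isometric embedding $\CC^{lm} \hookrightarrow \CC^k$ together with a direct sum decomposition $\CC^m = \bigoplus_{i} W_i$ indexed by the non-basepoint elements of $n_+$ (with $W_i=0$ allowed).

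Collapsing to the quotient $G_{k,l,m}(n_+)/G_{k,l,m-1}(n_+)$ sends the locus $\sum_i m_i < m$ to the basepoint. On the decomposition side this is exactly the collapse of all decompositions of $\CC^m$ in which some summand equals the whole of $\CC^m$, which by Definition~\ref{definition: decompositions} produces the reduced complex $\cL_m^\diamond(n_+)$. This gives the desired functorial homeomorphism, and stabilizing in $n_+$ yields the claimed equivalence of spectra.

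For the compositional part, I would trace what composition of $*$-homomorphisms does to this parametrization. If $\phi\colon M_l \to M_k$ is encoded by an isometry $\iota_\phi\colon \CC^l \otimes \CC^n \hookrightarrow \CC^k$ together with a decomposition of $\CC^n$, and $\psi\colon M_k \to M_j$ by $\iota_\psi\colon \CC^k \otimes \CC^m \hookrightarrow \CC^j$ with a decomposition of $\CC^m$, then $\psi\circ\phi$ has multiplicity $mn$; its classifying isometry is the composite $\iota_\psi \circ (\iota_\phi \otimes 1_{\CC^m})\colon \CC^{lmn} \to \CC^j$, which is precisely the composition of injections described in the statement, and its induced decomposition of the multiplicity space $\CC^n \otimes \CC^m \cong \CC^{mn}$ is the tensor product of the two input decompositions, i.e.\ the map $\cL_n^\diamond \wedge \cL_m^\diamond \to \cL_{mn}^\diamond$. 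The main obstacle I anticipate is the third step: verifying that the stratification is topologically well-behaved so that the set-theoretic quotient really computes the homotopy cofiber, and keeping the $U(m)$-actions and collapsed loci aligned so as to match exactly the paper's definition of $\cL_m^\diamond$ rather than some other variant.
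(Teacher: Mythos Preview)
The proposal has a genuine gap at its central step. You treat $\cL_m^\diamond$ as a $\Gamma$-space $n_+ \mapsto \cL_m^\diamond(n_+)$ and assert a levelwise homeomorphism, but in the paper $\cL_m^\diamond$ is a single pointed space: the unreduced suspension of the nerve of the poset $\cD_m$ of proper unordered direct-sum decompositions of $\CC^m$ (Definition~\ref{definition: decompositions}). What is true levelwise is
\[
G_{k,l}^m(n_+) \;\cong\; \Inj(\CC^{lm},\CC^k)_+ \wedge_{U(m)} G_{m,1}^m(n_+),
\]
and the real content of the theorem is the identification of the stabilization of the $\Gamma$-space $G_{m,1}^m$ with $\Sigma^\infty \cL_m^\diamond$. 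Your description of the collapse is also off: passing to $G_{k,l,m}/G_{k,l,m-1}$ sends tuples with $\sum_i m_i<m$ to the basepoint, which on $G_{m,1}^m(n_+)$ simply leaves the wedge over all $(m_1,\dots,m_n)$ with $\sum m_i=m$ of orbits $U(m)/\prod U(m_i)$, together with a disjoint basepoint. It does not collapse any ``improper'' decompositions of $\CC^m$; in particular the indiscrete decomposition (one $m_i=m$, the rest zero) survives in $G_{m,1}^m(n_+)$.

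The paper bridges this gap in three steps you do not mention. First, it rewrites $G_{k,l}^m$ as a (homotopy) coend over the category $\Epi$ of finite sets and surjections (Propositions~\ref{prop: reduction} and~\ref{prop: little coend}) and shows that upon stabilization only the object $\underline{1}$ of $\Epi$ contributes, giving $\SSS^{m,1}_m \simeq \Sigma^\infty \int_h^{\Epi} \mathtt{I} \wedge {\Gred_{m,1}^m}_+$ (Lemma~\ref{lemma: stable coend}). Second, a cofiber-sequence argument identifies this coend with $|\cD^{\mathrm o}_m|^\diamond$, the unreduced suspension of the nerve of \emph{ordered} proper decompositions (Proposition~\ref{prop: ordered}); this is where the ``collapse of the improper decomposition'' you were looking for actually occurs, but as the cofiber of a map of homotopy colimits over $\Epi$, not as a levelwise quotient. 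Third, a topological Quillen Theorem~A argument (Proposition~\ref{proposition: ordered to unordered}) shows that forgetting the order gives an equivalence $|\cD^{\mathrm o}_m|\xrightarrow{\simeq} |\cD_m|=\cL_m$. None of these steps follows from a levelwise identification. Your treatment of the composition product, on the other hand, is essentially correct and matches the paper's argument, but it presupposes the identification just described.
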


The complexes $\cL^\diamond_m$ were first introduced in~\cite{Ar}, and were studied in detail in~\cite{Banff} and~\cite{AL}. They play a role in describing the subquotients of the rank filtration of $K$-theory~\cite{AL-Crelle, AL-Fundamenta}. Therefore it is perhaps not surprising that they play a similar role in the rank filtration of $\cM$, given the connection between $\cM$ and $\ku$.

Next proposition lists some relevant facts about the complexes $\cL_m^\diamond$ (Proposition~\ref{proposition: L_m facts} in the paper).
\begin{prop} \label{prop: Lm facts intro}
\begin{enumerate}
\item $\cL^\diamond_1=S^0$.
\item \label{intro rational} The complex $\cL^\diamond_m$ is rationally contractible for all $m>1$.
\item \label{prime powers} The complex $\cL^\diamond_m$ is contractible unless $m$ is a prime power.
\item \label{p-local} If $m=p^k$ where $p$ is a prime and $k>0$ then $\cL^\diamond_{p^k}$ is $p$-local.
\item The complex $\cL^\diamond_{p^k}$ has chromatic type $k$.
\end{enumerate}
\end{prop}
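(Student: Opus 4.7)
The plan is to deduce item (1) directly from the definition and to import items (2)--(5) from the existing literature on decomposition complexes, in particular \cite{Ar, Banff, AL, AL-Crelle, AL-Fundamenta}. For (1), the only orthogonal direct-sum decomposition of $\CC^1$ is the trivial one $\CC^1 = \CC^1$, so after adjoining a disjoint basepoint we obtain $S^0$.

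For items (2)--(5), the essential first step is to identify the pointed $U(m)$-space $\cL^\diamond_m$ of Definition~\ref{definition: decompositions} with the complex of orthogonal direct-sum decompositions of $\CC^m$ studied in the cited references. Granting this identification, each remaining item is an import. Rational contractibility for $m > 1$ follows from the Euler characteristic computation of \cite{AL}, in which the contribution of each decomposition type cancels by inclusion-exclusion over the partition lattice of $m$. Contractibility when $m$ is not a prime power is proved in \cite{Banff}: if $m = ab$ with $\gcd(a,b) = 1$, then the tensor product decomposition $\CC^m \cong \CC^a \otimes \CC^b$ provides a canonical refinement of every orthogonal decomposition, producing a conical contraction of $\cL^\diamond_m$. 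The $p$-locality of $\cL^\diamond_{p^k}$ in the prime power case follows from an isotropy analysis: in the natural cell structure on $\cL^\diamond_{p^k}$ the symmetric groups permuting equal-dimensional summands are subgroups of $S_{p^k}$ whose orders are powers of $p$, so all relevant isotropy groups are $p$-groups and every integer coprime to $p$ acts invertibly on the reduced homology. Finally, the chromatic type statement is the main theorem of \cite{AL-Crelle}: the lower bound on the chromatic type comes from fixed-point calculations for elementary abelian $p$-subgroups of $U(p^k)$, and the matching upper bound from connective $K$-theory acyclicity arguments.

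The main obstacle is the identification step itself. One must check, equivariantly for $U(m)$, that the model of $\cL^\diamond_m$ used here agrees up to weak equivalence with the models appearing in \cite{Banff, AL-Crelle}, where various simplicial or poset-theoretic presentations are used. This is a bookkeeping exercise comparing combinatorial models of the poset of orthogonal direct-sum decompositions, but it must be done carefully to ensure the $U(m)$-action matches. Once this compatibility is established, items (2)--(5) follow directly from the cited results with no additional homotopical computation required.
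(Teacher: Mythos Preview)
Your overall strategy---deduce (1) from the definition and import (2)--(5) from the literature---is exactly what the paper does. The paper attributes (1)--(4) to \cite[Proposition~9.6]{AL-Crelle} (which in turn relies on \cite{Ar}) and (5) to \cite[Theorem~2.2]{Ar2}, and then supplies an independent direct argument for (2). So at the level of approach there is no disagreement.

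However, several of your sketched justifications are incorrect and would not survive as written:

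\begin{itemize}
\item For (1), your reasoning is backwards. By Definition~\ref{definition: decompositions}, $\cD_m$ consists of \emph{proper} decompositions (into non-trivial, proper subspaces), so the trivial decomposition $\CC^1=\CC^1$ is excluded and $\cD_1=\emptyset$. Thus $\cL_1=|\cD_1|=\emptyset$, and $\cL_1^\diamond$, the unreduced suspension of the empty set, is $S^0$.
\item For (2), an Euler characteristic computation cannot establish rational contractibility. The paper's direct argument is quite different: it realizes $\cL_m^\diamond$ as the total homotopy cofiber of an $(m-1)$-cube of $U(m)$-orbits, and observes that certain edge maps have the form $U(m)/N_H(T)\to U(m)/H$, which are rational equivalences since $N_H(T)$ is the normalizer of a maximal torus in $H$.
\item For (3), the tensor product argument you sketch does not work as stated: an arbitrary orthogonal decomposition of $\CC^{ab}$ need not refine to a tensor product of decompositions of $\CC^a$ and $\CC^b$, so there is no canonical refinement producing a conical contraction. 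The actual argument in \cite{Ar} is different.
\item For (4), your isotropy claim is false. For the decomposition of $\CC^{p^k}$ into $p^k$ lines, the relevant symmetric group is $\Sigma_{p^k}$, which is not a $p$-group for $k\ge 1$ (already $|\Sigma_4|=24$). The $p$-locality of $\cL_{p^k}^\diamond$ requires a genuinely different argument.
\item For (5), the chromatic type statement is not in \cite{AL-Crelle}; it is \cite[Theorem~2.2]{Ar2}.
\end{itemize}

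In short: the plan is fine, but you should either simply cite the correct sources without attempting to summarize the proofs, or replace the incorrect sketches with accurate ones.
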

Here are some consequences of Theorem~\ref{thm: intro main} and Proposition~\ref{prop: Lm facts intro}. To begin with, we have a simple description of the endomorphisms in $\cM$. The endomorphism spectrum of $\Sigma^\infty_{\NC}M_k$ is the group ring spectrum of the projective unitary group $PU(k)$.
\begin{cor}
$\SSS^{k,k}\simeq \Sigma^\infty PU(k)_+.$
\end{cor}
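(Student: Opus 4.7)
The plan is to apply Theorem~\ref{thm: intro main} (the main theorem on the rank filtration) directly with $l=k$, exploiting the fact that the filtration has only a single nontrivial stage in this case.

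First I would observe that since $\lfloor k/k \rfloor = 1$, the rank filtration collapses to a single step, so $\SSS^{k,k} = \SSS^{k,k,1}$. By definition (and since $\SSS^{k,k,0}$ is the trivial filtration stage), this single stage coincides with its associated subquotient $\subq{k}{k}{1}$. So it suffices to compute $\subq{k}{k}{1}$.

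Next I would specialize Theorem~\ref{thm: intro main} to $m=1$, $l=k$. By Proposition~\ref{prop: Lm facts intro}(1), $\cL_1^\diamond \simeq S^0$. The injection space becomes $\Inj(\CC^k,\CC^k) = U(k)$, and the theorem gives
\[
\subq{k}{k}{1} \simeq \Sigma^\infty S^0 \wedge_{U(1)} U(k)_+ \simeq \Sigma^\infty \bigl(U(k)/U(1)\bigr)_+ .
\]
Under the identification $\CC^k \cong \CC^1 \otimes \CC^k$ prescribed in Theorem~\ref{thm: intro main}, the $U(1)$-action on $U(k)$ is by scalar multiplication, so the quotient is by definition the projective unitary group $PU(k)$. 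Therefore $\SSS^{k,k}\simeq \Sigma^\infty PU(k)_+$.

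There is no real obstacle here: the statement is a direct specialization of the main theorem together with the computation $\cL_1^\diamond = S^0$. The only point requiring minor care is identifying the $U(1)$-action as scalar multiplication (so that $U(k)/U(1) = PU(k)$), which follows from unpacking the description of the action in Theorem~\ref{thm: intro main}.
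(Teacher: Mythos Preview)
Your proposal is correct and follows essentially the same approach as the paper: the paper likewise observes that the rank filtration has length $\lfloor k/l\rfloor=1$ when $l=k$, so $\SSS^{k,k}=\SSS^{k,k,1}=\subq{k}{k}{1}$, and then specializes Theorem~\ref{thm: intro main} with $m=1$ together with $\cL_1^\diamond=S^0$ to get $\Sigma^\infty \Inj(\CC^k,\CC^k)/U(1)_+=\Sigma^\infty PU(k)_+$. The paper in fact states the slightly more general version $\SSS^{k,l}\simeq \Sigma^\infty U(k)/(U(k-l)\times U(1))_+$ for $l\le k\le 2l-1$ by the same reasoning, but for the corollary at hand your argument matches.
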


But our main application of Theorem~\ref{thm: intro main} and Proposition~\ref{prop: Lm facts intro}(\ref{intro rational}) is to give a simplified description of the rational homotopy type of $\SSS^{k,l}$, and consequently of the rationalization of $\NSp$. Recall that the composition maps $\SSS^{j, k}\wedge \SSS^{k, l}\to \SSS^{j, l}$ restrict to maps of the form $\SSS^{j, k, 1}\wedge \SSS^{k, l, 1}\to \SSS^{j, l, 1}$. It follows that the spectra $ \SSS^{k, l, 1}$ assemble to a spectral category, that we denote $\cM^1$, which has the same objects as $\cM$, and equipped with a functor $\cM^1\to \cM$ that is the identity on objects. Informally speaking, $\cM^1$ is the first stage of the rank filtration of $\cM$.

It follows from Theorem~\ref{thm: intro main} that there is an equivalence
\[
\SSS^{k,l,1}=\SSS^{k,l}_1\simeq \Sigma^\infty {\Inj(\CC^l, \CC^k)/_{U(1)}}_+.
\]
It is worth noting that $\SSS^{k,l,1}$ is a {\it suspension spectrum}.
Let $\mathbb{P}\Inj$ be the topologically enriched symmetric  monoidal  category of finite positive dimensional  Hilbert spaces  and embeddings up to scalar. That is, up to isomorphism the objects of $\mathbb{P}\Inj$ are given by $\mathbb{C}^k$ for $k \geq 1$ and $$\mathbb{P}\Inj (\mathbb{C}^l , \mathbb{C}^k) = \Inj(\CC^l, \CC^k)/_{U(1)}=U(k)/ (U(1) \times U(k-l)). $$
$\mathbb{P}\Inj$ is a symmetric monoidal category, with the monoidal structure given by  the tensor product.
Since the functor $\Sigma^\infty_+,$ from topological spaces to our model of spectra $\SpM$, is symmetric monoidal, we can define a category enriched in $\SpM$, which we denote $\mathbb{P}\Inj^{\Sp}$, by applying $\Sigma^\infty_+$ to the mapping spaces of $\mathbb{P}\Inj$. We show in Section \ref{section: first stage} that
$$\cM^1\simeq(\mathbb{P}\Inj^{\Sp})^{\op}.$$

The following is an easy consequence of Proposition~\ref{prop: Lm facts intro}\eqref{intro rational}
\begin{cor}\label{cor: rational approximation}
The natural map
\[
\Sigma^\infty {\Inj(\CC^l, \CC^k)/_{U(1)}}_+\simeq\SSS^{k,l,1} \xrightarrow{\simeq_{\mathbb Q}} \SSS^{k,l}
\]
is a rational homotopy equivalence.
\end{cor}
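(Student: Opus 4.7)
The plan is to climb the rank filtration one step at a time and observe that each step is rationally an equivalence. Concretely, I would combine the two results cited in the statement: Theorem~\ref{thm: intro main}, which identifies the filtration quotients $\subq{k}{l}{m}$, and Proposition~\ref{prop: Lm facts intro}(\ref{intro rational}), which says that $\cL^\diamond_m$ is rationally contractible for every $m>1$.

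First I would recall the cofiber sequence
\[
\SSS^{k,l,m-1}\longrightarrow \SSS^{k,l,m}\longrightarrow \subq{k}{l}{m}
\]
coming from the definition of the rank filtration. By Theorem~\ref{thm: intro main},
\[
\subq{k}{l}{m}\simeq \Sigma^\infty \cL_m^\diamond \wedge_{U(m)} \Inj(\CC^{lm},\CC^k)_+.
\]
Since smashing with $\Inj(\CC^{lm},\CC^k)_+$ and taking $U(m)$-orbits are constructions that preserve rational equivalences (the latter because $U(m)$ is a compact Lie group, so its homotopy orbits compute rationally as expected), and since $\cL_m^\diamond$ is rationally contractible for $m>1$, it follows that $\subq{k}{l}{m}$ is rationally contractible for every $m\geq 2$.

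Therefore each map $\SSS^{k,l,m-1}\to \SSS^{k,l,m}$ in the rank filtration is a rational equivalence for $m\geq 2$. Since the filtration stabilizes at stage $\lfloor k/l\rfloor$, composing finitely many such rational equivalences gives that $\SSS^{k,l,1}\to \SSS^{k,l}$ is a rational equivalence, as desired. The identification $\SSS^{k,l,1}\simeq \Sigma^\infty \Inj(\CC^l,\CC^k)/_{U(1)}{}_+$ is the already-stated special case $m=1$ of Theorem~\ref{thm: intro main}, using that $\cL^\diamond_1=S^0$.

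The only mild subtlety—and hence the ``main obstacle''—is making sure that the rational vanishing of $\cL_m^\diamond$ really does propagate through the twisted smash product $(-)\wedge_{U(m)}\Inj(\CC^{lm},\CC^k)_+$. This is safe because $U(m)$ acts freely on $\Inj(\CC^{lm},\CC^k)$ (as $lm\le k$ in the range where the stratum is non-trivial), so the twisted smash product can be modeled as a homotopy orbit spectrum of a $U(m)$-spectrum whose underlying spectrum is rationally trivial; rational homotopy orbits of a rationally trivial spectrum are rationally trivial. Once this is noted, the corollary follows immediately from the filtration argument above.
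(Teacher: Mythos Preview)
Your proposal is correct and follows essentially the same approach as the paper's own proof (given as Corollary~\ref{cor:rat}): walk up the finite rank filtration, invoke Theorem~\ref{theorem: main} to identify each subquotient $\SSS^{k,l}_m$, and use the rational contractibility of $\cL_m^\diamond$ for $m>1$ to conclude that each step past the first is a rational equivalence. If anything, you are slightly more careful than the paper in spelling out why rational triviality of $\cL_m^\diamond$ passes to $\cL_m^\diamond\wedge_{U(m)}\Inj(\CC^{lm},\CC^k)_+$; the paper simply asserts this.
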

\begin{rem}
If one lets $k$ go to $\infty$ in corollary~\ref{cor: rational approximation}, one obtains the classical fact that the canonical map $\Sigma^\infty \CC P^\infty_+ \to ku$ is a rational equivalence. So corollary~\ref{cor: rational approximation} can be thought of as a lift of this fact.
\end{rem}
Corollary~\ref{cor: rational approximation} says that the functor $\cM^1\to \cM$ is a rational equivalence of spectral categories. Using this, we can give a rather explicit description of the rationalization of $\NSp$.
We discuss the general construction of rational localization and $p$-localization of a stable, monoidal, $\infty$-category in Section~\ref{section: localizations}. Let $\NSp_{\mathbb{Q}}$ denote the rational localization of the $\infty$-category of noncommutative spectra $\NSp$ and let $\Sp_{\mathbb{Q}}$ denote the rational localization of the usual $\infty$-category of spectra $\Sp$ . It is well known that $\Sp_{\mathbb{Q}}$ is a symmetric monoidal presentable $\infty$-category, and the rationalization functor $$L_{\mathbb{Q}}\colon \Sp \to \Sp_{\mathbb{Q}}$$
is symmetric monoidal.

Let $\mathbb{P}\Inj_{\infty}$ denote the topological nerve of the topological category $\mathbb{P}\Inj$ defined above. Applying the symmetric monoidal functor
$$L_{\mathbb{Q}}\circ\Sigma^\infty_+:\cS\to \Sp_\QQ$$
to the mapping spaces of $\mathbb{P}\Inj_{\infty}$ we obtain an $\infty$-category enriched in $\Sp_\QQ$ which we denote by $\mathbb{P}\Inj_{\infty}^{\Sp_\QQ}$. Let $P_{\Sp_{\mathbb{Q}}} ((\mathbb{P}\Inj_{\infty}^{\Sp_\QQ})^{\op})$ denote the $\infty$-category of $\Sp_\QQ$-enriched functors from $\mathbb{P}\Inj_{\infty}^{\Sp_\QQ}$ to $\Sp_\QQ$. The following theorem summarizes the results of Section~\ref{section: localizations} about $\Sp_{\mathbb Q}$:

\begin{thm}[Theorem \ref{t:rational spectra}]\label{t:rational spectra0}
There are equivalences of symmetric monoidal $\infty$-categories
\[
\NSp_{\mathbb Q}\simeq P_{\Sp_{\mathbb{Q}}} ((\mathbb{P}\Inj_{\infty}^{\Sp_\QQ})^{\op})\simeq \mathrm{Fun}(\mathbb{P}\Inj_{\infty}, \Sp_{\mathbb{\QQ}}).
\]
\end{thm}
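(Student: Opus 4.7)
The plan is to combine the main theorem of \cite{ABS1} with Corollary~\ref{cor: rational approximation} and a change-of-enrichment argument. From \cite{ABS1} we have a symmetric monoidal equivalence $\NSp \simeq P_\Sp(\cM)$. Using the general framework for rationalization of stable presentable $\infty$-categories developed in Section~\ref{section: localizations}, rationalization of spectral presheaf categories can be computed pointwise on the enriched base:
\[
\NSp_\QQ \simeq P_{\Sp_\QQ}(\cM_\QQ),
\]
where $\cM_\QQ$ denotes the $\Sp_\QQ$-enriched $\infty$-category obtained by applying $L_\QQ$ to every mapping spectrum of $\cM$. The tensor product of matrix algebras equips $\cM$ with a symmetric monoidal structure, which descends to $\cM_\QQ$ and induces the Day convolution symmetric monoidal structure on $\NSp_\QQ$.

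Next I would invoke Corollary~\ref{cor: rational approximation}, which asserts that the inclusion of spectral categories $\cM^1 \hookrightarrow \cM$ is a rational equivalence, hence the induced functor $\cM^1_\QQ \to \cM_\QQ$ is a symmetric monoidal equivalence of $\Sp_\QQ$-enriched $\infty$-categories. Combining this with the identification $\cM^1 \simeq (\mathbb{P}\Inj^{\Sp})^{\op}$ from Section~\ref{section: first stage} (symmetric monoidal via tensor products of Hilbert spaces) and the observation that $\mathbb{P}\Inj_\infty^{\Sp_\QQ}$ is by definition obtained from $\mathbb{P}\Inj_\infty$ by applying the symmetric monoidal functor $L_\QQ \circ \Sigma^\infty_+$ to mapping spaces, one gets a symmetric monoidal identification $\cM_\QQ \simeq (\mathbb{P}\Inj_\infty^{\Sp_\QQ})^{\op}$. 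Substituting into the previous display yields the first equivalence of the theorem.

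For the second equivalence, I would apply a standard change-of-enrichment principle. Since $L_\QQ \circ \Sigma^\infty_+ \colon \cS \to \Sp_\QQ$ is a symmetric monoidal left adjoint, and $\mathbb{P}\Inj_\infty^{\Sp_\QQ}$ is the base-change of $\mathbb{P}\Inj_\infty$ along this functor, the universal property of base-change of enrichment (in the Hinich framework of \cite{Hin2,Hin3}) produces a natural equivalence
\[
P_{\Sp_\QQ}\bigl((\mathbb{P}\Inj_\infty^{\Sp_\QQ})^{\op}\bigr) \simeq \mathrm{Fun}^{\Sp_\QQ}\bigl(\mathbb{P}\Inj_\infty^{\Sp_\QQ}, \Sp_\QQ\bigr) \simeq \mathrm{Fun}\bigl(\mathbb{P}\Inj_\infty, \Sp_\QQ\bigr),
\]
where on the right $\Sp_\QQ$ carries its canonical $\cS$-enrichment. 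Compatibility of Day convolution with change of enrichment along a symmetric monoidal left adjoint ensures that this equivalence is symmetric monoidal.

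The main technical obstacle I anticipate lies in setting up Section~\ref{section: localizations} with enough care to justify the two abstract inputs used above: first, that the rationalization of a spectral presheaf $\infty$-category is itself a presheaf $\infty$-category on the rationalized base category, symmetric monoidally; and second, that the change-of-enrichment equivalence between $\mathrm{Fun}^{\Sp_\QQ}(\cC^{\Sp_\QQ}, \Sp_\QQ)$ and $\mathrm{Fun}(\cC, \Sp_\QQ)$ is valid and symmetric monoidal in Hinich's setting. Once these formal statements are established, the proof essentially reduces to combining them with Corollary~\ref{cor: rational approximation} and the identification $\cM^1 \simeq (\mathbb{P}\Inj^{\Sp})^{\op}$ from earlier in the paper.
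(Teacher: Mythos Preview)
Your proposal is correct and follows essentially the same route as the paper. The paper packages the argument slightly differently: rather than rationalizing the abstract inclusion $\cM^1\hookrightarrow\cM$, it constructs an explicit symmetric monoidal $\Sp_\QQ$-enriched functor $\widetilde{\mathrm E}_\QQ\colon(\mathbb{P}\Inj_\infty^{\Sp_\QQ})^{\op}\to\NSp_\QQ$ (as the mate of $L_\QQ\circ\Sigma^\infty\circ\widetilde{\End}$ under the change-of-enrichment adjunction) and checks fully-faithfulness via a commutative diagram (Lemma~\ref{l:big}) together with Corollary~\ref{cor:rat}; the identification $\NSp_\QQ\simeq P_{\Sp_\QQ}(\cM^\QQ)$ is stated as a rational analogue of the main theorem of~\cite{ABS1} (Theorem~\ref{theorem: rational main presentation}), with $\cM^\QQ$ defined intrinsically inside $\NSp_\QQ$ and then identified with your $\cM_\QQ$ via Lemma~\ref{l:M^Q}. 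The change-of-enrichment step for the second equivalence is argued exactly as you propose. The two technical inputs you flag as obstacles are precisely what the paper isolates as Theorem~\ref{theorem: rational main presentation} and the adjunction displayed in the proof of Theorem~\ref{t:rational spectra}.
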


\begin{rem}
Note that the expression on the right in Theorem \ref{t:rational spectra0} does not use enriched $\infty$-categories. This is the usual $\infty$-category of functors from $\mathbb{P}\Inj_{\infty}$ to $\Sp_{\mathbb{\QQ}}$. Note also that the mapping spaces in $\mathbb{P}\Inj$ are all finite connected CW-complexes (manifolds, even). It is natural to wonder if one can given a more direct algebraic  model of $\NSp_{\mathbb{Q}}$ as a dg-category.

\end{rem}

\subsubsection*{$p$-local and chromatic picture}
Now instead of rationalizing, suppose we fix a prime $p$ and localize everything at $p$. One can obtain further information about the $p$-localization of $\cM$. It follows from Proposition~\ref{prop: Lm facts intro} parts~\eqref{prime powers} and~\eqref{p-local} that the filtration is $p$-locally constant except at powers of $p$. Therefore it is natural to regrade the filtration of $\SSS^{k,l}$ as follows
\[
\SSS^{k,l,1} \hookrightarrow \SSS^{k,l,p} \hookrightarrow \SSS^{k,l,p^2}\hookrightarrow \cdots \hookrightarrow \SSS^{k,l,p^i}\cdots
\]
With this grading, (the $p$-localization) of $\cM$ is a filtered category in the usual sense, that composition adds degrees. Furthermore, we have
\begin{cor}
Fix a prime $p$ and localize everything at $p$. The map
\[
\SSS^{k,l, p^n}\to \SSS^{k,l}
\]
induces an isomorphism on Morava $K(i)$-theory for $i\le n$.
\end{cor}
The last corollary may have consequences for ``noncommutative chromatic homotopy theory'', but we will not pursue it here.

\subsubsection*{Section by section outline of the paper}

In Section~\ref{ss: Glk} we set the stage by recalling some relevant definitions from~\cite{ABS1}. We introduce functors $G_{k,l}$, where $k,l$ are positive integers. The functors $G_{k,l}$ encode all the information about morphisms in $\cM$. More precisely, the stabilization of $G_{k,l}$ is the spectral mapping object from $k$ to $l$ in $\cM$. In Section~\ref{section: stable rank} we introduce a natural filtration of the functors $G_{k,l}$, which we call the rank filtration.

In Section~\ref{ss: finite} we give an explicit description of the restriction of $G_{k,l}$ to pointed finite sets. In Section~\ref{section: cofibrant} we establish various properties of the restriction of $G_{k,l}$ to finite sets. Most importantly, we show that the functor $G_{k,l}$ is a $\Gamma$-space, in the sense that it is determined by its values on finite sets, and we also observe that the restriction of $G_{k,l}$ to finite sets is cofibrant in the Reedy model structure on $\Gamma$-spaces.

In Section~\ref{section: k-theory} we show that all the mapping spectra in $\cM$ are equipped with a natural map to the connective $K$-theory spectrum $ku$. We observe that the rank filtration of $\cM$ is a lift of the classical rank filtration of $ku$. We also discuss how one can use our models to represent the $K$-theory functor on non-commutative complexes. We make connection with some work of Dadarlat and McClure~\cite{DM}.

In Sections~\ref{ss: subquotients} and~\ref{ss: Ln} we describe the subquotients of the rank filtration in terms of complexes of direct-sum decompositions that arose earlier in the study of the rank filtration of $ku$. Since complexes of direct-sum decompositions are well-studied, we obtain interesting consequences about $\cM$ and $\NSp$. In Section~\ref{ss: calculations} we use the results of preceding sections to calculate the mapping spectra in $\cM$ in some cases. In Section~\ref{section: localizations} we use those results to give an explicit model for the rationalization of $\cM$ and $\NSp$. Rationally, $\NSp$ is equivalent to the $\infty$-category of presheaves of rational spectra on the $\infty$-category whose objects are finite-dimensional Hilbert spaces, and whose hom-spaces are linear embeddings modulo scalars. We also point out some consequences that our models have for the $p$-localization and potentially chromatic localization of $\cM$ and $\NSp$.

\subsubsection*{Acknowledgements}
We would like to thank Jeffrey Carlson for fruitful correspondences during the early stages of our work. We are grateful to Vladimir Hinich for explaining to us his theory of enriched infinity categories and its relevance to our work.

\section{The functors $G_{k,l}$ and their stabilization}\label{ss: Glk}
In this section we recall the construction of $\cM$ as a category strictly enriched in $\SpM$ (see Remark \ref{r:M}). We will introduce certain functors $G_{k,l}\in\SpM$, which will represent the mapping spectra in $\cM$.


To begin with, let $\Csep$ denote the category of (non-unital) separable $C^*$-algebras and $*$-homomorphisms. Note that the matrix algebras $\{M_k\mid k=1, 2, \ldots\}$ are objects of $\Csep$. Consider $\Csep$ as a topologically enriched category, where for every $A,B\in\Csep$ we endow the set of $*$-homomorphisms $\Csep(A,B)$ with the topology of pointwise norm convergence. It is well-known that $\Csep$ is cotensored over the category of pointed finite CW-complexes~\cite{AG}. For a finite pointed CW-complex $X$ and a $C^*$-algebra $A$ we denote the contensoring by $\tC_0(X, A)$.

Next, we want to use $\Csep$ to define $\cM$ as a category enriched in $\SpM$. Recall that the underlying category of $\SpM$ is the category of pointed continuous functors from pointed finite CW-complexes to pointed topological spaces.


\begin{define}\label{def: Gkl}
Let $G_{k,l}\colon \FCW\to \Top$ be the functor defined as follows
\[
G_{k, l}(X)=\Map_{\Csep}(\tC_0(X, M_l),M_k).
\]
\end{define}
We will consider $G_{k, l}$ to be an object of $\SpM$.
Notice that for all $k, l, m$, there is a natural map $G_{k,l}(X)\wedge G_{l,m}(Y)\to G_{k, m}(X\wedge Y)$, defined as a composition of the following maps.
\begin{multline*}
\Map_{\Csep}(\tC_0(X, M_l),M_k)\wedge \Map_{\Csep}(\tC_0(Y, M_m),M_l)\to \\ \to\Map_{\Csep}(\tC_0(X, M_l),M_k)\wedge \Map_{\Csep}(\tC_0(X\wedge Y, M_m),\tC_0(X, M_l)) \to \\ \to \Map_{\Csep}(\tC_0(X\wedge Y, M_m), M_k).
\end{multline*}
Here the second map is composition, and the first map is induced by the cotensoring
\[
\Map_{\Csep}(\tC_0(Y, M_m),M_l)\to \Map_{\Csep}(\tC_0(X\wedge Y, M_m),\tC_0(X, M_l)).
\]
This map induces natural maps
\begin{equation}\label{eq: composition}
G_{k,l}\wedge G_{l,m} \to G_{k, m}
\end{equation}
 where $\wedge$ denotes internal smash product (aka Day convolution).
\begin{define}\label{def: cM}
Let $\cM$ be the following $\SpM$-enriched category. The objects of $\cM$ are positive integers. Given two integers $k, l$, the mapping spectrum from $k$ to $l$ is given by $G_{k, l}$. The composition law in $\cM$ is defined by the structure maps like in~\eqref{eq: composition}, for all $k, l, m$.
\end{define}

\begin{rem}
Recall that in the infinity categorical picture, $\cM$ is the full spectral subcategory of $\NSp$ whose objects are $\{\Sigma^\infty_{\NC} M_k\mid k=1, 2\ldots \}$. We show in \cite{ABS1} that the enriched coherent nerve of $\cM$ from Definition \ref{def: cM} is equivalent to $\cM$ defined above.
The objects of $\cM$ provide a set of compact generators of $\NSp$. The main result of~\cite{ABS1} says that the $\infty$-category $\NSp$ is equivalent to the $\infty$-category  $P_\Sp (\cM)$ of spectral presheaves on $\cM$. In this paper we investigate the category $\cM$, with the eventual goal in mind of understanding $\NSp$. In view of the results of~\cite{ABS1} in this paper we identify $\NSp$ with $P_\Sp (\cM)$. In the remaining part of the paper we will study the category $\cM$ mainly using the explicit model provided in Definition \ref{def: cM}, but will state the results also in the infinity categorical picture.
\end{rem}

Recall from the introduction that we identify $\SpM_\infty= \Sp$ and we denote by $\partial_1:\SpM\to \Sp$, the localization functor. If $G\in\SpM$ is a pointed continuous functors from pointed finite CW-complexes to pointed topological spaces, then $\partial_1 G$ is the spectrum corresponding to the sequence of spaces $\{G(S^0), G(S^1), \ldots $\}, or in other words
$$\partial_1 G\simeq
{\hocolim}_n\Sigma^{-n}\Sigma^{\infty}G(S^n).$$
This is known as the stabilization, or the first derivative of the functor $G$.
We have shown in \cite{ABS1} that for all natural numbers $k, l$, we have
\[
\SSS^{k,l}:=\Hom_{\NSp}(\Sigma^\infty_{\NC} M_k,\Sigma^\infty_{\NC} M_l) \simeq\partial_1G_{k, l}\simeq{\hocolim}_n\Sigma^{-n}\Sigma^{\infty}G_{k, l}(S^n).
\]



\begin{example}
Let us consider the case $k=l=1$. The functor $G_{1,1}$ is given as follows
\[
G_{1, 1}(X)=\Map_{\Csep}(\tC_0(X, M_1),M_1)=\Map_{\Csep}(\tC_0(X, \mathbb C),\mathbb C).
\]
By the Gelfand-Naimark theorem, it follows that $G_{1, 1}(X)\cong X$, and therefore $\SSS^{1,1}= \Sigma^\infty S^0$ is the ordinary sphere spectrum.
\end{example}
\begin{rem}\label{remark: commutative}
Let us interpret $\SSS^{1, 1}=\Sigma^\infty S^0$ as the endomorphism spectrum $\operatorname{End}_{\NSp}(\Sigma^\infty_{\NC} M_1)$. We have identified $\NSp$ with the $\infty$-category of spectral presheaves on $\cM$. In this picture, the $\infty$-category of spectral presheaves on the full $\Sp$-enriched subcategory of $\cM$ consisting of the object $\Sigma^\infty_{\NC} M_1$ can be identified with the $\infty$-category $\Sp$ of ``commutative'' or ``ordinary'' spectra. There is an ``inclusion'' functor of $\Sp$-tensored categories $\Sp\to \NSp$ which in terms of presheaves is defined by an $\Sp$-enriched left Kan extension (weighted colimit) from $\{\Sigma^\infty_{\NC} M_1\}$ to $\cM$. The inclusion functor has a right adjoint $\NSp\to \Sp$, a kind of ``abelianization'' functor, defined by restriction of presheaves. We will say a little more about it in Section~\ref{subsection: K-theory}.
\end{rem}

\section{The rank filtration}\label{section: stable rank}
In this section we introduce the rank filtration of $G_{k,l}$, which induces a rank filtration of the spectral category $\cM$. In later sections we will see that the rank filtration of $\cM$ is a lift of the classical rank filtration of the connective $K$-theory spectrum $ku$.

Let $l,k\geq 1$, let  $X$  be a finite pointed CW-complex and let
$$
f \in G_{k,l}(X)=\Csep(\tC_0(X, M_l),M_k),
$$
be a map. Let $A_f \subseteq M_k$ be the image of $f$. $A_f$ acts as non-unital $C^*$-Algebra on the Hilbert space  $\mathbb{C}^k$ and thus we get an orthogonal decomposition $\mathbb{C}^k = \mathrm{Ker}A_f \oplus A_f\cdot \mathbb C^k$. Denote $V_f := A_f\cdot \mathbb C^k \subseteq \mathbb{C}^k$.
We shall filter the space $G_{k,l}(X)$ according to the dimension of $V_f$. The following theorem is useful in that analysis.
\begin{thm}\label{t:ideal}
Let $X$ be a pointed compact metrizable space and let $l\geq 1$. There is a bijection between the closed subsets of $X\setminus\{*\}$ and closed two-sided ideals
of $\tC_0(X, M_l)$, defined by the following correspondence
$$F\mapsto I_F:=\{f\in\tC_0(X,M_l)\mid \forall x\in F .\, f(x)=0\}.$$
\end{thm}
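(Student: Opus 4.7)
The plan is to reduce the statement to the classical Gelfand--type correspondence for scalar functions on a locally compact Hausdorff space, together with the standard fact that the matrix algebras $M_l$ have no nontrivial closed two-sided ideals.

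First I would set $Y := X\setminus\{*\}$, which is locally compact Hausdorff (and $\sigma$-compact, since $X$ is compact metrizable), and identify
\[
\tC_0(X,M_l)\;\cong\; C_0(Y)\otimes M_l \;\cong\; M_l\bigl(C_0(Y)\bigr),
\]
i.e.\ the $C^*$-algebra of $l\times l$ matrices with entries in $C_0(Y)$. Under this identification the assignment $F\mapsto I_F$ becomes the assignment sending a closed $F\subseteq Y$ to the $C^*$-algebra $M_l(\{\varphi\in C_0(Y)\mid \varphi|_F=0\})$ of matrix-valued functions vanishing on $F$. So it suffices to prove: (i) every closed two-sided ideal $J\subseteq M_l(C_0(Y))$ has the form $M_l(I)$ for some unique closed ideal $I\subseteq C_0(Y)$; and (ii) every closed ideal of $C_0(Y)$ is $\{\varphi\mid \varphi|_F=0\}$ for a unique closed $F\subseteq Y$.

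For (ii), which is the classical Gelfand correspondence for a locally compact Hausdorff space, one takes $F$ to be the common zero set of the ideal. Given a closed ideal $I\subseteq C_0(Y)$, put $F:=\{y\in Y\mid \varphi(y)=0\ \forall\varphi\in I\}$; clearly $I\subseteq I_F$, and the reverse inclusion is the standard argument using $C^*$-functional calculus and a partition-of-unity/Urysohn argument to approximate a function vanishing on $F$ by elements of $I$ (for each $y\notin F$ pick $\varphi_y\in I$ with $\varphi_y(y)\neq 0$, pass to $\varphi_y^*\varphi_y\ge 0$, truncate via functional calculus, and patch on compact subsets of $Y\setminus F$). For (i) I would use the usual simplicity argument: for each $y\in Y$ the slice $J_y:=\{g(y)\mid g\in J\}\subseteq M_l$ is a two-sided ideal of $M_l$ — because multiplying $g\in J$ by a function of the form $\varphi\otimes a$ with $\varphi\in C_0(Y)$, $a\in M_l$, $\varphi(y)=1$, produces an element of $J$ whose value at $y$ is $ag(y)$ or $g(y)a$ — and since $M_l$ is simple, $J_y\in\{0,M_l\}$. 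Then setting $I:=\{\varphi\in C_0(Y)\mid \varphi\cdot e_{11}\in J\}$ (or equivalently, the ideal in $C_0(Y)$ generated by all matrix entries of elements of $J$) gives a closed ideal of $C_0(Y)$ with $M_l(I)\subseteq J$, and the reverse inclusion is obtained by writing any $g\in J$ as $g=\sum_{i,j}g_{ij}\otimes e_{ij}$ and observing that each $g_{ij}$ lies in $I$ because $g_{ij}\otimes e_{11}=e_{1i}\cdot g\cdot e_{j1}\in J$.

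The main obstacle is step (i); once the ``matrix-entries are in a scalar ideal'' statement is established, everything else is classical. In particular, injectivity of $F\mapsto I_F$ is immediate from the existence of matrix-valued bump functions (take $\varphi\otimes \mathrm{id}$ with $\varphi$ a scalar bump separating a point of $F_1\triangle F_2$ from the other closed set), and the whole bijection then follows by composing the correspondences in (i) and (ii), which match $F$ with $I=\{\varphi\mid \varphi|_F=0\}$ and hence $J=M_l(I)=I_F$.
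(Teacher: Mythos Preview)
Your argument is correct and complete in spirit, but it takes a genuinely different route from the paper's proof. The paper handles the case $l=1$ by appealing to the classical result, and then deduces the general case by invoking strong Morita equivalence: since $\tC_0(X,M_l)\otimes\KK\cong\tC_0(X)\otimes\KK$, the two algebras are strongly Morita equivalent (Brown--Green--Rieffel), and then a result of Zettl gives a bijection between their lattices of closed two-sided ideals. Your approach instead does the matrix-unit computation directly inside $M_l(C_0(Y))$, exploiting the simplicity of $M_l$ fibrewise. This is more elementary and self-contained (no need to quote the Morita-equivalence ideal correspondence), while the paper's approach generalises immediately to any simple, nuclear algebra in place of $M_l$ without rewriting the proof.

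One small point worth tightening in your argument: when you write $g_{ij}\otimes e_{11}=e_{1i}\cdot g\cdot e_{j1}$, the elements $e_{1i},e_{j1}$ do not literally lie in $C_0(Y)\otimes M_l$ when $Y$ is non-compact, since there is no unit. You should either pass to the multiplier algebra (where $1\otimes e_{pq}$ does live, and closed ideals are automatically multiplier-invariant), or use an approximate unit $(u_\lambda)$ in $C_0(Y)$ and observe that $(u_\lambda\otimes e_{1i})\,g\,(u_\lambda\otimes e_{j1})=u_\lambda^2 g_{ij}\otimes e_{11}\to g_{ij}\otimes e_{11}$, which lies in $J$ since $J$ is closed. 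The same remark applies to the inclusion $M_l(I)\subseteq J$.
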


\begin{proof}
The case $l=1$ is well-known. We will show that it implies the rest. Two $C^*$-algebras $A$ and $B$ are called strongly Morita equivalent if they are related by a $B$-$A$-imprimitivity bimodule in the sense of \cite{Rie}. Let $\KK$ be the algebra of compact operators on an infinite dimensional separable Hilbert space. It is shown in \cite{BGR} that if $A$ and $B$ are separable, then $A$ and $B$ are strongly Morita equivalent iff
$$A\otimes \KK\cong B\otimes \KK.$$

It is not hard to see that $\tC_0(X, M_l)\cong \tC_0(X)\otimes M_l$ and $M_l\otimes\KK\cong\KK$, so we have
$$\tC_0(X, M_l)\otimes\KK\cong (\tC_0(X)\otimes M_l)\otimes\KK \cong \tC_0(X)\otimes (M_l\otimes\KK) \cong \tC_0(X)\otimes \KK.$$
Thus, $\tC_0(X, M_l)$ and $\tC_0(X)$ are strongly Morita equivalent. By \cite{Zet}, we have an isomorphism between the sets of closed two-sided ideals
of $\tC_0(X, M_l)$ and of $\tC_0(X)$.
\end{proof}

\begin{lem}\label{lem: factorization}
Let $X \in \FCW$ be pointed finite CW-complex and let
$$f\in G_{k,l}(X)={\Csep}(\tC_0(X, M_l),M_k).$$
Then $f$ admits a unique factorization of the following form
\begin{equation}
\tC_0(X, M_l)  \twoheadrightarrow\tC_0(F_f\cup \{*\}, M_l)\stackrel{f'}{\rightarrowtail} M_k
\end{equation}
where the first map is the surjective restriction to a finite subset $F_f \subset X\setminus \{\ast\}$ and the second map $f'\colon \tC_0(F_f\cup \{*\}, M_l) \rightarrowtail M_k$ is a monomorphism. In particular we have $V_f = V_{f'}$.
\end{lem}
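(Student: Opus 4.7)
The plan is to read off the factorization directly from the kernel of $f$, using Theorem~\ref{t:ideal} to convert the kernel (a closed two-sided ideal) into a closed subset of $X\setminus\{*\}$, and then use the finite-dimensionality of $M_k$ to force that subset to be finite.

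More precisely, first I would let $I:=\ker(f)\subseteq \tC_0(X,M_l)$, which is a closed two-sided ideal. Since $X$ is a finite CW-complex it is compact metrizable, so Theorem~\ref{t:ideal} applies and gives a (unique) closed subset $F_f\subseteq X\setminus\{*\}$ with $I=I_{F_f}$. Restriction of matrix-valued continuous functions to a closed subset is surjective (by the matrix-valued Tietze extension, applied entry-wise to the Hermitian decomposition), so the restriction map
\[
\pi_{F_f}\colon \tC_0(X,M_l)\twoheadrightarrow \tC_0(F_f\cup\{*\},M_l)
\]
has kernel exactly $I_{F_f}=I$. Consequently $f$ descends to an injection $f'\colon \tC_0(F_f\cup\{*\},M_l)\rightarrowtail M_k$ with $f=f'\circ\pi_{F_f}$.

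Next I would argue $F_f$ is finite. Since $f'$ is an injective $*$-homomorphism of $C^*$-algebras, its image is a $C^*$-subalgebra of $M_k$, hence finite-dimensional, and isometric to $\tC_0(F_f\cup\{*\},M_l)$. But as $F_f\cup\{*\}$ is compact Hausdorff, the algebra $\tC_0(F_f\cup\{*\},M_l)\cong C(F_f\cup\{*\})\otimes M_l$ is finite-dimensional if and only if $F_f\cup\{*\}$ is a finite set; so $F_f$ must be finite. (In fact, $|F_f|\leq k/l$, which foreshadows the rank filtration.)

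For uniqueness, any factorization $\tC_0(X,M_l)\twoheadrightarrow \tC_0(F\cup\{*\},M_l)\rightarrowtail M_k$ by restriction to a finite $F\subset X\setminus\{*\}$ and an injection has kernel equal to $I_F$ in the first step; since this kernel must equal $\ker(f)=I_{F_f}$, the bijection of Theorem~\ref{t:ideal} forces $F=F_f$, and $f'$ is then determined as the unique map induced on the quotient. Finally, $V_f=V_{f'}$ is immediate: $\pi_{F_f}$ is surjective, so the images of $f$ and $f'$ in $M_k$ coincide, and $V_f$ depends only on this image. The main technical point in the whole argument is the appeal to Theorem~\ref{t:ideal}; once that bijection is in hand, the rest is bookkeeping around kernels of $C^*$-maps and dimension counts.
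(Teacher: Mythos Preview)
Your proposal is correct and follows essentially the same route as the paper: identify $\ker(f)$ with $I_{F_f}$ via Theorem~\ref{t:ideal}, use Tietze to see that restriction is surjective so that $f$ factors as a surjection followed by an injection $f'$, and then invoke finite-dimensionality of $M_k$ to force $F_f$ to be finite. Your treatment of uniqueness and of $V_f=V_{f'}$ is slightly more explicit than the paper's, but there is no substantive difference in method.
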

\begin{proof}
First, note that $\ker(f)$ is a closed two sided $*$-ideal of $\tC_0(X, M_l)$. By Theorem \ref{t:ideal}, there exists a closed subset $F_f$ of $X\setminus\{*\}$ such that
$$\ker(f)=I_{F_f}=\{g\in \tC_0(X, M_l)\mid g|_{F_f}=0\}.$$
Notice that $\ker(f)=I_{F_f}$ is also the kernel of the restriction homomorphism
$$\tC_0(X, M_l)\to \tC_0(F\cup\{*\}, M_l).$$
We claim that the restriction homomorphism is surjective. This amounts to showing that any map from a closed subset of $X$ to $M_l$ can be extended to a map from $X$ to $M_l$. This in turn follows immediately from the Tietze extension theorem.

It follows that $f$ admits a unique factorization of the following form
\begin{equation}\label{eq: factorization}
\tC_0(X, M_l) \twoheadrightarrow \tC_0(F_f\cup \{*\}, M_l)\stackrel{f'}{\rightarrowtail} M_k
\end{equation}
where the first map is restriction to a subset $F_f$ and the second map $f'\colon \tC_0(F_f\cup \{*\}, M_l) \to M_k$ is a monomorphism. Moreover $F_f$ is the minimal subset of $X\setminus \{*\}$ for which the map $f$ factors through $ \tC_0(F_f\cup \{*\}, M_l)$. Notice that $\im(f')$ is finite-dimensional as a vector space over $\CC$. This implies that $F_f$ is finite.
\end{proof}

\begin{lem}\label{l:lele}
let
$$
f \in G_{k,l}(X)=\Csep(\tC_0(X, M_l),M_k),
$$
we have $$l|\dim V_f$$ and $$l\cdot |F_f | \leq  \dim V_f \leq k$$
\end{lem}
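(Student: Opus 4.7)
The plan is to reduce the general case to the analysis of the injective factor $f'$ provided by Lemma~\ref{lem: factorization}, and then to exploit the simplicity of $M_l$. Since $V_f = V_{f'}$, we may replace $f$ by $f'\colon \tC_0(F_f\cup\{*\}, M_l) \rightarrowtail M_k$, which is an injective $*$-homomorphism whose domain is canonically identified with the $C^*$-algebra direct sum $\bigoplus_{x\in F_f} M_l$.

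Next I would analyze the action of the image $A_{f'}\subseteq M_k$ on $\CC^k$. Let $e_x\in A_{f'}$ be the image of the unit of the $x$-th summand $M_l$. Because the $M_l$-summands are mutually orthogonal in the domain (as elements supported on distinct points of $F_f$), the $e_x$ are pairwise orthogonal self-adjoint idempotents (projections) in $M_k$, and $e:=\sum_{x\in F_f}e_x$ is a projection that acts as the identity on $A_{f'}$. Hence $V_{f'} = A_{f'}\cdot\CC^k = e\cdot\CC^k = \bigoplus_{x\in F_f} e_x\CC^k$, so
\[
\dim V_{f'} = \sum_{x\in F_f} \mathrm{rank}(e_x).
\]
The bound $\dim V_f\le k$ is then automatic from $V_f\subseteq\CC^k$.

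The divisibility and lower bound now follow from a standard linear-algebra fact about $*$-representations of matrix algebras, which is the heart of the argument. Restricting $f'$ to the $x$-th summand gives a $*$-homomorphism $M_l \to M_k$ whose image lies in the corner $e_x M_k e_x$, and which is unital as a map $M_l \to e_x M_k e_x$ (since the unit of $M_l$ maps to $e_x$). Since $M_l$ is simple, this map is injective; and a unital injective $*$-homomorphism $M_l \hookrightarrow M_{r}$ (for $r=\mathrm{rank}(e_x)$) forces $l\mid r$ (up to unitary change of basis the image is $M_l\otimes I_{r/l}$). In particular $r\ge l$. Writing $\mathrm{rank}(e_x)=l\cdot m_x$ with $m_x\ge 1$, we obtain
\[
\dim V_f = \dim V_{f'} = l\cdot\sum_{x\in F_f} m_x,
\]
so $l\mid \dim V_f$ and $\dim V_f \ge l\cdot |F_f|$, completing the proof.

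The only substantive step is the fact that a unital $*$-embedding $M_l\hookrightarrow M_r$ forces $l\mid r$; this is classical (e.g.\ from the structure theorem for finite-dimensional $C^*$-algebras, or directly by decomposing $\CC^r$ into irreducible $M_l$-submodules, each of dimension $l$), so I expect no real obstacle beyond citing or briefly recalling this fact.
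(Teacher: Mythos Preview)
Your proof is correct and follows essentially the same approach as the paper's: both reduce to the injective factor $f'$ via Lemma~\ref{lem: factorization}, decompose $V_f$ according to the summands of $M_l^{F_f}$, and then use the representation theory of $M_l$ to see that each summand contributes a dimension that is a positive multiple of $l$. The only cosmetic difference is that the paper phrases the last step in terms of the classification of finite-dimensional unital $M_l^{F_f}$-modules (each is a direct sum of the standard modules $W_x\cong\CC^l$), whereas you phrase it via the orthogonal projections $e_x$ and the fact that a unital $*$-embedding $M_l\hookrightarrow M_r$ forces $l\mid r$; these are the same argument in slightly different language.
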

\begin{proof}
By lemma \ref{lem: factorization}   the function $f$ can be factored as a surjection followed by an injection
\[
\tC_0(X, M_l) \to M_l^{F_f} \stackrel{f'}{\to} M_k
\]
Thus we have an isomorphism $M_l^{F_f} \cong A_f$. $1\in M_l^{F_f}$ now acts on $\mathbb{C}^k$  as a projection onto $V_f$ and thus we
get a \textbf{unital} action of $M_l^{F_f}$ on $V_f$.  Now for $ x \in  F_f$ denote by $W_x$ the unital $M_l^{F_f}$ module obtained by the canonical action on $\mathbb{C}^l$ via map $M_l^{F_f}\to M_l^{\{x\}}  = M_l$. Every finite dimensional unital
 $M_l^{F_f}$ module is a direct sum of finitely many copies of the $W_x$'s. We thus get that $$V_f  = \bigoplus_{x \in F_f} = W_x^{e_x}.$$  The injectivity   of the map $M_l^{F_f} \stackrel{f'}{\to} M_k$ implies that  $e_x \ge 1$ for every $x \in F_f $. Since $\dim V_f = l\sum e_x$ and $V_f \subseteq \mathbb{C}^k$ we get the claim.
\end{proof}

\begin{define}
Let $l,k\geq 1$, let  $X$  be a finite pointed CW-complex and let
$$
f \in G_{k,l}(X)=\Csep(\tC_0(X, M_l),M_k).
$$
We define the \emph{rank of $f$} to be the non-negative integer
  $$\mathrm{rank}(f):= \frac{\dim V_f}{l} \in \mathbb{Z}_{\ge 0}.$$
\end{define}
Suppose we have a map $\alpha\colon X\to Y$ in $\FCW$. By functoriality, it induces a map $G_{k,l}(X)\to G_{k,l}(Y)$. Suppose $f\in G_{k,l}(X)$. By definition, $f$ is a $*$-homomorphism $f\colon \tC_0(X, M_l) \to M_k$, and the image of $f$ in $G_{k,l}(Y)$ is the composite homomorphism
\[
\tC_0(Y, M_l)\xrightarrow{\alpha^*} \tC_0(X, M_l) \xrightarrow{f} M_k.
\]
Therefore the rank of the image of $f$ in $G_{k,l}(Y)$ is at most the rank of $f$. Because of this, the following definition really does describe a functor.
\begin{define}  Let $k, l \ge 1$, $ m\ge 0$. Define the functors $G_{k,l,m} \colon \FCW  \to \Top$ as follows
\[
G_{k,l,m}(X)= \left \{ f \in G_{k,l}(X) |\quad\mathrm{rank}(f) \leq m \right \} \subseteq G_{k,l}(X).
\]
Similarly, define $\SSS^{k,l,m}$ to be the stabilization of $G_{k,l,m}$. Explicitly, $\SSS^{k,l,m}$ is the spectrum $\{G_{k,l,m}(S^0), G_{k,l,m}(S^1), \ldots\}$.
\end{define}
\begin{rem}
Note that for all $X \in \FCW$ and $k, l \ge 1$  we have $G_{k,l,0}(X) = \ast $. Additionally by lemma \ref{l:lele}
for $m \geq \lfloor\frac{k}{l}\rfloor $ we get $G_{k,l,m}  = G_{k,l}$.
\end{rem}
We have defined a filtration of $G_{k,l}$ by sequence of subfunctors
\[
*=G_{k,l,0}\subset G_{k,l,1}\subset \cdots \subset G_{k,l, \lfloor\frac{k}{l}\rfloor} = G_{k,l}.
\]
We call this filtration \emph{the rank filtration}. Now recall that the functors $G_{k,l}$ represent mapping spectra in $\cM$ and that composition in $\cM$ is determined by maps of the following form
\[
G_{k,l}(X)\wedge G_{l,m}(Y) \to G_{k,m}(X\wedge Y).
\]
The following proposition tells how the rank filtration interacts with composition.
\begin{prop}\label{prop: rank}
For all $r$ and $s$, the composition map above restricts to a natural map
\[
G_{k,l,r}(X)\wedge G_{l,m,s}(Y) \to G_{k,m,rs}(X\wedge Y)
\]
\end{prop}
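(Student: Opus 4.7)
The plan is to establish the stronger statement that the composition $h$ of $f \in G_{k,l}(X)$ and $g \in G_{l,m}(Y)$ satisfies $\mathrm{rank}(h) = \mathrm{rank}(f)\cdot \mathrm{rank}(g)$; the proposition follows immediately. Naturality in $X$ and $Y$ is automatic since $G_{k,l,r}$ is a subfunctor defined by a pointwise rank condition, so only the pointwise claim requires proof. By Lemma~\ref{lem: factorization}, $f$ factors as $\tC_0(X,M_l)\twoheadrightarrow M_l^{F_f}\xrightarrow{f'}M_k$ with $f'$ injective, and likewise $g$ factors through an injection $g'\colon M_m^{F_g}\rightarrowtail M_l$. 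Unwinding the composition~\eqref{eq: composition} gives the formula
\[
h(\phi) = f'\Bigl( \bigl(g'(\phi|_{\{x\}\times F_g})\bigr)_{x\in F_f} \Bigr),
\]
so $h$ depends only on $\phi|_{F_f\times F_g}$; by Tietze extension, the restriction $\tC_0(X\wedge Y, M_m)\twoheadrightarrow M_m^{F_f\times F_g}$ is surjective, so the coordinates $\phi|_{\{x\}\times F_g}$ may be prescribed independently. As they do, the tuple $\bigl(g'(\phi|_{\{x\}\times F_g})\bigr)_{x\in F_f}$ ranges over all of $A_g^{F_f}\subseteq M_l^{F_f}$, where $A_g=\mathrm{im}(g')$. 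Hence the image of $h$ is $A_h = f'(A_g^{F_f})\subseteq A_f$.

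Next, I would invoke the module description from the proof of Lemma~\ref{l:lele}: as a unital $A_f \cong M_l^{F_f}$-module, $V_f$ decomposes as $\bigoplus_{x\in F_f} \CC^l\otimes \CC^{e_x}$, where the $x$-th copy of $M_l$ acts on the $\CC^l$ tensorand of the $x$-th summand and $\sum_x e_x = \mathrm{rank}(f)$. Since the unit of $A_f$ acts on $\CC^k$ as the orthogonal projection onto $V_f$, any $a \in A_h \subseteq A_f$ annihilates $V_f^\perp$; therefore $V_h = A_h\cdot \CC^k = A_h\cdot V_f$. Under the above decomposition the $x$-th factor $A_g$ of $A_g^{F_f}$ acts on the $\CC^l$ tensorand of the $x$-th summand, with image $V_g$, yielding
\[
V_h \cong \bigoplus_{x\in F_f} V_g\otimes \CC^{e_x}, \qquad \dim V_h = \dim V_g\cdot \sum_x e_x = m\,\mathrm{rank}(g)\,\mathrm{rank}(f).
\]
Dividing by $m$ gives $\mathrm{rank}(h) = \mathrm{rank}(f)\cdot \mathrm{rank}(g)\leq rs$, as required.

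The one delicate step is verifying the description $A_h = f'(A_g^{F_f})$: the containment $A_h \subseteq f'(A_g^{F_f})$ is immediate from the formula for $h$, but the reverse containment needs the independence of the coordinates $\phi|_{\{x\}\times F_g}$ for different $x\in F_f$, which is precisely where Tietze extension for a finite discrete closed subset of the CW-complex $X\wedge Y$ enters. Once $A_h$ is identified, the rest is a direct computation exploiting the semisimple structure of the unital $M_l^{F_f}$-module $V_f$.
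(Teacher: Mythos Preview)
Your proof is correct and follows essentially the same approach as the paper: factor $f$ and $g$ through finite sets via Lemma~\ref{lem: factorization}, identify the image of the composite as $f'(A_g^{F_f})$, and compute $\dim V_h$ using the $M_l^{F_f}$-module decomposition of $V_f$ from Lemma~\ref{l:lele}. If anything, you are slightly more careful than the paper in invoking Tietze extension to justify the equality $A_h = f'(A_g^{F_f})$; the paper asserts this identification implicitly when it passes from $f\odot g$ to the action of $M_m^{F_g\times F_f}$ on $\CC^k$.
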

\begin{proof}
Recall that $G_{k,l}(X)= \Csep(\tC_0(X, M_l),M_k)$. Written in these terms, the composition map has the following form
\begin{multline*}
\Csep(\tC_0(X, M_l),M_k)\wedge \Csep(\tC_0(Y, M_m),M_l)\to \\ \to \Csep(\tC_0(X, M_l),M_k)\wedge  \Csep(\tC_0(X\wedge Y, M_m),\tC_0(X, M_l))\to \\ \to \Csep(\tC_0(X\wedge Y, M_m), M_k).
\end{multline*}
Suppose that $f\in \Csep(\tC_0(X, M_l),M_k)$ and $g\in \Csep(\tC_0(Y, M_m),M_l)$ have ranks $r$ and $s$ respectively. Let $f\odot g$ denote the image of $f\wedge g$ in $\Csep(\tC_0(X\wedge Y, M_m), M_k)$. Our goal is to show that $f\odot g$ has rank $rs$.

By Lemma~\ref{lem: factorization}, there exist finite subsets $F_f\subset X\smallsetminus \{\ast\}$, $F_g\subset Y\smallsetminus \{\ast\}$ such that $f$ factors as $\tC_0(X, M_l)  \twoheadrightarrow\tC_0(F_f\cup \{*\}, M_l)\stackrel{f'}{\rightarrowtail} M_k$, and there is a similar factorization of $g$. It follows that $f\odot g$ factors as follows
\[
\tC_0(X\wedge Y, M_m)  \twoheadrightarrow\tC_0(F_f\times F_g \cup \{*\}, M_m)\stackrel{f'\odot g'}{\rightarrowtail} M_k
\]
where the second map is itself the following composite
\[
(M_m^{F_g})^{F_f}\xrightarrow{g'^{\times F_f}} M_l^{F_f} \xrightarrow{f'} M_k.
\]
Here the first map is the cartesian product of $|F_f|$ copies of the $g'$ with itself. This map determines an action of $M_m^{F_g\times F_f}$ on $\mathbb C^k$. Our goal is to show that $M_m^{F_g\times F_f}\cdot \mathbb C^k$ has dimension $rsm$.

Recall that $A_f=A_{f'}$ is the image of $f'$. Since $\mathrm{rank}(f)=r$, $A_f\cdot \mathbb C^k$ has dimension $rl$. If $B\subset M_l$ is a $C^*$-subalgebra such that $B\cdot \mathbb C^l$ has dimension $d$, and we let $B^{F_f}$ act on $\mathbb C^k$ via the map $f'$, then $B^{F_f}\cdot \mathbb C^k$ has dimension $rd$. Now take $B$ to be the image of $g'$.
Since $\mathrm{rank}(g)=s$, $B\cdot \mathbb C^l$ has dimension $sm$, so finally we conclude that $M_m^{F_g\times F_f}\cdot \mathbb C^k$ has dimension $rsm$. This means that $f\odot g$ has rank $rs$.
\end{proof}
\begin{rem}
Proposition~\ref{prop: rank} can be intepreted as follows: the rank filtration is a filtration of the category $\cM$ by the {\it multiplicative} monoid of natural numbers.
\end{rem}
\section{The restriction of $G_{k,l}$ to finite sets}\label{ss: finite}
It will turn out that the functor $G_{k,l}$, whose domain is the category of pointed finite CW-complexes, is determined by its restriction to the category of pointed finite sets. In this section we give an explicit description of the restriction of $G_{k,l}$ to finite sets.

Let us begin with a definition, which also serves to establish some notation.
\begin{define}
For a natural number $i$, let $[i]=\{0,1,\ldots,i\}$, considered as a pointed set with basepoint $0$. Let $\Fin$ be the category whose objects are $\{[0],[1],\dots,[k], \ldots\}$ and whose morphisms are basepoint-preserving functions. For $k\geq 0$, let $\Finsk$ denote the full subcategory of $\Fin$ spanned by the objects $\{[0],[1],\dots,[k]\}$. We will also use the notation $\underline{i}$ for the unpointed set $\{1, \ldots, i\}$.
\end{define}
\begin{rem}
The category $\Fin$ is denoted $\Gamma$ in some sources, and $\Gamma^{\op}$ in some other sources. We find the notation $\Fin$ to be more descriptive. But following the tradition established by Segal~\cite{Se1}, we call pointed functors $\Fin\to \Top$ {\it $\Gamma$-spaces}.
\end{rem}
We will now examine the restriction of $G_{k,l}$ to $\Fin$. For a finite pointed set $[t]$, $G_{k,l}([t])$ is the space of non-unital $C^*$-algebra homomorphisms from $M_l^t$ to $M_k$. Spaces of such homomorphisms are well-understood. We want to describe them in a way that makes the functoriality in $[t]$ explicit.
%
%
%
We need a few definitions.
\begin{define}\label{def: multisets}
The category of pointed multisets is defined as follows. The objects are ordered $t$-tuples $(m_1, \ldots, m_t)$ of natural numbers. The possibility $t=0$ is included, in which case the tuple is empty. A morphism $(m_1, \ldots, m_t)\to (n_1, \ldots, n_s)$ consists of a pointed function $\alpha\colon [t]\to [s]$ such that $n_j=\Sigma_{i\in \alpha^{-1}(j)}m_i$ for all $1\le j\le s$. In particular, if $j$ is not in the image of $\alpha$ then $n_j=0$. Note that there are no restrictions on $m_i$ for $i\in \alpha^{-1}(0)$.

Given a pointed multiset $(m_1, \ldots, m_t)$ and a pointed function of sets $\alpha\colon [t]\to [s]$ we define $\alpha_*(m_1, \ldots, m_t)$ to be the multiset $(n_1, \ldots, n_s)$ with $n_j=\Sigma_{i\in \alpha^{-1}(j)}m_i$ for all $1\le j\le s$.
\end{define}
\begin{example}
Let $\alpha\colon[3]\to [2]$ be the function defined by $\alpha(0)=\alpha(1)=0$, $\alpha(2)=\alpha(3)=1$. Then $\alpha_*(4,2,3)=(5,0)$.
\end{example}
Suppose we have a multiset $(m_1, \ldots, m_t)$ and a natural number $l$. We will make much use of the unitary vector space $\CC^{(m_1+\cdots+m_t)l}$. We identify this vector space with
\[
\CC^{m_1+\cdots+m_t}\otimes \CC^l\cong \CC^{m_1}\otimes \CC^l \oplus\cdots \oplus \CC^{m_t}\otimes \CC^l.
\]
Notice that there are commuting actions of $U(m_1)\times \cdots \times U(m_t)$ and $U(l)$ on $\CC^{(m_1+\cdots+m_t)l}$. It follows that these groups act on any space obtained by applying a continuous functor to this vector space.

Now suppose we have a morphism of pointed multisets $\alpha\colon (m_1, \ldots, m_t)\to (n_1, \ldots, n_s)$, so $(n_1, \ldots, n_s)=\alpha_*(m_1, \ldots, m_t)$. Choose unitary isomorphisms $\CC^{n_j}\stackrel{\cong}{\to} \CC^{\Sigma_{i\in\alpha^{-1}(j)} m_i}$ for all $1\le j\le s$. The function $\alpha$ together with these isomorphisms determine an inner-product-preserving inclusion $\CC^{(n_1+\cdots+n_s)l}\to \CC^{(m_1+\cdots+m_t)l}$.
This inclusion in turn defines a map of spaces (where $k$ is another natural number)
\[
\Inj(\CC^{(m_1+\cdots+m_t)l}, \CC^k)\to \Inj(\CC^{(n_1+\cdots+n_s)l}, \CC^k)
\]
A different choice of isomorphisms $\CC^{n_j}\stackrel{\cong}{\to} \CC^{\Sigma_{i\in\alpha^{-1}(j)} m_i}$ will change the map by precomposition with a unitary automorphism of $\CC^{(n_1+\cdots+n_s)l}$ that is induced by automorphisms of $\CC^{n_1}, \ldots, \CC^{n_s}$. Therefore we get a well-defined (i.e., independent of choices of isomorphisms) map
\[
\Inj(\CC^{(m_1+\cdots+m_t)l}, \CC^k)\to \Inj(\CC^{(n_1+\cdots+n_s)l}, \CC^k)/_{\prod_{j=1}^s U(n_j)}
\]
Moreover, it is easy to see that the map passes to a well-defined map between quotients
\begin{equation}\label{eq: functor}
\Inj(\CC^{(m_1+\cdots+m_t)l}, \CC^k)/_{\prod_{i=1}^t U(m_i)} \to \Inj(\CC^{(n_1+\cdots+n_s)l}, \CC^k)/_{\prod_{j=1}^s U(n_j)}
\end{equation}
The upshot is that we have defined a functor from the category of pointed multi-sets to spaces that sends $(m_1, \ldots, m_t)$ to $\Inj(\CC^{(m_1+\cdots+m_t)l}, \CC^k)/_{\prod_{i=1}^t U(m_i)} $.
\begin{rem}
Here is a slightly different way to think of the map~\eqref{eq: functor}. Let $m=m_1+\cdots+m_t$. There are homeomorphisms
\[
\Inj(\CC^{ml}, \CC^k)/_{\prod_{i=1}^t U(m_i)}\cong U(k)/\prod_{i=1}^t U(m_i) \times U(k-ml)
\]
and similarly
\[
\Inj(\CC^{nl}, \CC^k)/_{\prod_{j=1}^s U(n_j)}\cong U(k)/\prod_{j=1}^s U(n_j) \times U(k-nl).
\]
A morphism of multisets $\alpha\colon (m_1, \ldots, m_t)\to (n_1, \ldots, n_s)$ gives a canonical way to conjugate $\prod_{i=1}^t U(m_i) \times U(k-ml)$ into a subgroup of $\prod_{j=1}^s U(n_j) \times U(k-nl)$, and therefore gives rise to a $U(k)$-equivariant map
\[
U(k)/\prod_{i=1}^t U(m_i) \times U(k-ml)\to U(k)/\prod_{j=1}^s U(n_j) \times U(k-nl).
\]
\end{rem}

Now we can describe the functor $G_{k,l}$ on finite sets. The following proposition is essentially due to Bratelli~\cite{Bratelli}.
\begin{prop}\label{prop:SC inj Mlt Mk}
There is a homeomorphism
\begin{equation}\label{eq: Gkloft}
G_{k,l}([t])\cong \bigvee_{(m_1, \ldots, m_t)} {\Inj(\CC^{(m_1+\cdots+m_t)l}, \CC^k)/_{\prod_{i=1}^t U(m_i)}}_+
\end{equation}
The wedge sum on the right is indexed on non-zero ordered $t$-tuples $(m_1, \ldots, m_t)$ of non-negative integers (the zero tuple corresponds to the basepoint). The functoriality on the right hand side is defined as follows. A pointed map $\alpha\colon [t]\to [s]$,
induces a map
\begin{multline*}
\bigvee_{(m_1, \ldots, m_t)} {\Inj(\CC^{(m_1+\cdots+m_t)l}, \CC^k)/_{\prod_{i=1}^t U(m_i)}}_+ \to \\ \to\bigvee_{(n_1, \ldots, n_s)} {\Inj(\CC^{(n_1+\cdots+n_s)l}, \CC^k)/_{\prod_{j=1}^s U(n_j)}}_+
\end{multline*}
that sends the wedge summand corresponding to $(m_1, \ldots, m_t)$ to the wedge summand corresponding to $\alpha_*(m_1, \ldots, m_t)$ by the map~\eqref{eq: functor}, assuming $\alpha_*(m_1, \ldots, m_t)$ is not a tuple of zeros. If $\alpha_*(m_1, \ldots, m_t)$ consists just of zeros, then $\alpha$ sends the corresponding wedge summand to the basepoint.
\end{prop}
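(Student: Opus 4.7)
The plan is to identify $G_{k,l}([t])$ with the space of non-unital $*$-homomorphisms $\phi\colon M_l^t \to M_k$ equipped with the topology of pointwise norm convergence, and then stratify this space by Bratteli-style multiplicity data. Using the identification $\tC_0([t], M_l) \cong M_l^t$, I would first note that for each $\phi$, the images $p_i := \phi(e_i)$ of the canonical central idempotents are mutually orthogonal projections in $M_k$ whose ranges $V_i \subseteq \CC^k$ are mutually orthogonal subspaces. Because the only irreducible $*$-representation of $M_l$ is the defining one on $\CC^l$, each $V_i$ acquires (canonically up to a unitary automorphism of a multiplicity space) an isomorphism $V_i \cong \CC^{m_i} \otimes \CC^l$ of $M_l$-modules, where $m_i := (\dim V_i)/l$.

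Next, fixing a nonzero tuple $(m_1, \ldots, m_t)$ with $\sum_i m_i l \leq k$, I would define a continuous map
\[
\Inj\bigl(\CC^{(m_1+\cdots+m_t)l}, \CC^k\bigr)/_{\prod_i U(m_i)} \longrightarrow G_{k,l}([t])
\]
sending the class of an isometric embedding $\iota$ to the homomorphism $\phi_\iota$ which acts on the image of $\iota$ via the identification $\CC^{(m_1+\cdots+m_t)l} \cong \bigoplus_i \CC^{m_i}\otimes \CC^l$ and the block-diagonal action of $M_l^t$ (each factor $M_l$ acting on its own summand by $\id \otimes M_l$), and acts as zero on the orthogonal complement. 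The preceding paragraph shows that this map surjects onto the subset $S_{(m_1,\ldots,m_t)}$ of $*$-homomorphisms with exactly these multiplicities, and that two embeddings give the same homomorphism iff they differ by an $M_l$-equivariant unitary on $\bigoplus_i \CC^{m_i}\otimes \CC^l$, i.e., by a block-diagonal element of $\prod_i (U(m_i)\otimes \id)$. Thus the induced map to $S_{(m_1,\ldots,m_t)}$ is a continuous bijection.

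The main obstacle is promoting this to a homeomorphism and assembling the strata. For the former, both sides are compact Hausdorff (the source is a quotient of a compact Stiefel-type manifold; the target is a closed subset of the finite-dimensional space of linear maps $M_l^t \to M_k$), so any continuous bijection is automatically a homeomorphism. For the latter, I would verify that the multiplicity tuple $\phi \mapsto (m_1(\phi), \ldots, m_t(\phi))$ is locally constant: the ranks of the orthogonal projections $\phi(e_i)$ are integer-valued and depend continuously on $\phi$. Hence the strata $S_{(m_1,\ldots,m_t)}$ are clopen, and the zero homomorphism is isolated because any nontrivial $\phi$ has some $\phi(e_i)$ with operator norm exactly $1$. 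This realizes $G_{k,l}([t])$ as the topological disjoint union of the nonzero strata together with an isolated basepoint, which is precisely the wedge sum in~\eqref{eq: Gkloft}.

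Finally I would verify functoriality. A pointed map $\alpha\colon [t]\to [s]$ induces $\alpha^*\colon M_l^s \to M_l^t$ sending $e_j$ (for $j\geq 1$) to $\sum_{i\in\alpha^{-1}(j)} e_i$, and $G_{k,l}$ acts by precomposition $\phi \mapsto \phi\circ\alpha^*$. For $\phi \in S_{(m_1,\ldots,m_t)}$, the $j$-th projection of $\phi\circ\alpha^*$ has range of dimension $\bigl(\sum_{i\in\alpha^{-1}(j)} m_i\bigr)l = n_j\, l$, where $(n_1,\ldots,n_s) = \alpha_*(m_1,\ldots,m_t)$; and the corresponding embedding is obtained by discarding the summands $\CC^{m_i}\otimes \CC^l$ with $\alpha(i)=0$ and regrouping the rest along the fibres of $\alpha$, which is exactly the map~\eqref{eq: functor}. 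The degenerate case $\alpha_*(m_1,\ldots,m_t) = (0,\ldots,0)$ occurs precisely when every $e_j$ is sent by $\phi\circ\alpha^*$ to zero, so $\phi\circ\alpha^* = 0$ is the basepoint, completing the check.
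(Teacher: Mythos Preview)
Your proof is correct and follows essentially the same route as the paper's: both construct the map by sending an isometric embedding to the associated block-diagonal representation, identify the fibers via Schur's lemma, and verify surjectivity by decomposing an arbitrary $\phi$ through the orthogonal projections $\phi(e_i)$. The one genuine difference is the argument that the continuous bijection is a homeomorphism: the paper observes that both source and target are disjoint unions of $U(k)$-orbits and that the map is $U(k)$-equivariant, while you instead invoke compactness of the source (finite wedge of Stiefel quotients) and Hausdorffness of the target. Your route is slightly more elementary; the paper's route has the mild advantage of making the clopen decomposition into strata and the local constancy of the multiplicity tuple automatic rather than a separate check. Your treatment of the functoriality is also more explicit than the paper's, which simply asserts that it follows by diagram-chasing.
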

\begin{proof}
By definiton~\ref{def: Gkl}, there is a homeomorphism \[G_{k,l}([t])\cong  {\Csep}(M_l^t,M_k).\]
For every multi-set $(m_1, \ldots, m_t)$, we define a map
\begin{equation}\label{eq: basicmap}
\Inj(\CC^{(m_1+\cdots+m_t)l}, \CC^k)/_{\prod_{i=1}^t U(m_i)} \to {\Csep}(M_l^t,M_k)
\end{equation}
as follows. Suppose we have a unitary isometric inclusion $\CC^{(m_1+\cdots+m_t)l} \hookrightarrow \CC^k$. From this, we get a unitary isomorphism (determined up to an automorphism of $\CC^{k-ml}$)
\begin{equation}\label{eq: decomp}
 \CC^{m_1}\otimes \CC^l \oplus \cdots \oplus\CC^{m_t}\otimes \CC^l \oplus \CC^{k-ml} \stackrel{\cong}{\to} \CC^k
\end{equation}
Having fixed such an isomorphism, we associate with it a $C^*$-algebra homomorphism $M_l^t\to M_k$ as follows: the $i$-th factor $M_l$ of $M_l^t$ acts on $\CC^{m_i}\otimes \CC^l$ by identity on $\CC^{m_i}$ and by the standard action on $\CC^l$. Note that the action of $M_l^t$ on $\CC^{k-ml}$ is multiplication by zero.

Automorphisms of $\CC^{k-ml}$ commute with the action of $M_l^t$ on $ \CC^{m_1}\otimes \CC^l \oplus \cdots \oplus\CC^{m_t}\otimes \CC^l \oplus \CC^{k-ml}$. It follows that changing isomorphism~\ref{eq: decomp} by an automorphism of $\CC^{k-ml}$ does not change the resulting algebra homomorphism from $M_l^t$ to $M_k$. It follows in turn that we have a well-defined map
\[
\Inj(\CC^{(m_1+\cdots+m_t)l}, \CC^k) \to {\Csep}(M_l^t,M_k).
\]
It follows from elementary representation theory (Schur Lemma) that two elements of $\Inj(\CC^{(m_1+\cdots+m_t)l}, \CC^k)$ induce the same algebra homomorphism if and only if they differ by an action of $U(m_1)\times\cdots\times U(m_t)$. Therefore we get a well-defined injective map in~\eqref{eq: basicmap}.

Taking union over multi-sets of the form $(m_1, \ldots, m_t)$ with fixed $t$, we obtain a map
\begin{equation}\label{eq: algebra morphisms}
 \bigvee_{(m_1, \ldots, m_t)} {\Inj(\CC^{(m_1+\cdots+m_t)l}, \CC^k)/_{\prod_{i=1}^t U(m_i)}}_+ \stackrel{\cong}{\to} {\Csep}(M_l^t,M_k)
\end{equation}
which we claim is a homeomorphism. Indeed, we already know that it is injective. 
Next we need to show that the map~\eqref{eq: algebra morphisms} is surjective. Let $f\colon M_l^t\to M^k$ be a $C^*$-algebra homomorphism. Let $I_1, \ldots, I_t$ be the identity elements of the $t$ factors $M_l$ of $M_l^t$. Then $f(I_1), \ldots, f(I_t)$ are pairwise commuting hermitian idempotents in $M_k$. It follows that for $i=1, \ldots, t$, $f(I_i)$ is hermitian projection onto $U_i$, where $U_1, \ldots, U_t$ are pairwise orthogonal subspaces of $\CC^k$. Now suppose that $1 \le i\le t$ and $A_i$ is an element of the $i$th factor $M_l$ of $M_l^t$. Then $f(A_i)=f(A_i)f(I_i)=f(I_i)f(A_i)$. Thus $f(A_i)$ commutes with the hermitian idempotent $f(I_i)$. It follows that $f(A_i)$ leaves invariant $U_i$ and the orthogonal complement of $U_i$. Moreover, since $f(A_i)=f(A_i)f(I_i)$ it follows that $f(A_i)$ is the composition of projection onto $U_i$ and a linear transformation of $U_i$. It follows that the restriction of $f$ to the $i$th factor of $M_l^t$ defines a unital representation of the algebra $M_l$ on $U_i$. Since $M_l$ is Morita equivalent to $\CC$, $U_i$ is isomorphic to a sum of copies of the standard representation of $M_l$. This means that we can write $U_i\cong \CC^{m_i}\otimes \CC^l$, where $m_1, \ldots, m_t$ are some non-negative integers. With this identification the $i$-th $M_l$ acts on $U_i$ via standard action, and it follows that $f$ is in the image of the map~\eqref{eq: algebra morphisms}

We have shown that the map~\eqref{eq: algebra morphisms} is a bijection. To show that it is a homeomorphism, observe that $U(k)$ acts continuously on the source and the target. Moreover, both the source and the target are topologized as the disjoint union of $U(k)$-orbits. This is true by definition for the source. To see this for the target, notice that the map that associates to an algebra morphism $f\colon M_l^t\to M_k$ the integers $(m_1, \ldots, m_t)$ is continuous and therefore locally constant, and $U(k)$ acts transitively on the preimage of any $t$-tuple of integers. Thus the map ~\eqref{eq: algebra morphisms} is a $U(k)$-equivariant bijection between disjoint unions of orbits of a continuous action $U(k)$. It follows that it is a homeomorphism.

The statement about functoriality follows by straightforward diagram-chasing.
\end{proof}
%
In the previous section we defined the rank filtration of $G_{k,l}$. Unwinding the definitions, we find that if $f \in G_{k,l}([t])$ belongs to the wedge summand corresponding to $(m_1, \ldots, m_t)$ in Proposition \ref{prop:SC inj Mlt Mk}, then
$$\mathrm{rank}(f)=m_1+\cdots + m_t.$$
Thus, on finite sets the rank filtration is given by the following formula:
\begin{equation}\label{eq: rank filtration}
G_{k,l,m}([t])= \bigvee_{\underset{ m_1+\cdots + m_t\le m\}}{\{(m_1, \ldots, m_t)\mid}} {\Inj(\CC^{(m_1+\cdots+m_t)l}, \CC^k)/_{\prod_{i=1}^t U(m_i)}}_+.
\end{equation}

\section{$G_{k,l}$ is a cofibrant $\Gamma$-space}\label{section: cofibrant}
In this section we observe that for all $k, l, m$ the restriction of the functor $G_{k,l,m}$ from finite complexes to finite sets is cofibrant, in a certain well-known model structure on $\Gamma$ spaces. This implies that the strict smash product between these functors is equivalent to the derived smash product. We also show that the value of the functor $G_{k,l,m}$ on pointed finite CW-complexes is equivalent to both the strict and the derived left Kan extension of the restriction of $G_{k,l,m}$ to the category of pointed finite sets. Furthermore the $\Gamma$-space $G_{k,l,m}$ is $\min(\lfloor\frac{k}{l}\rfloor, m)$-skeletal. This implies that $G_{k,l,m}$ is determined by its restriction to the category of sets of cardinality at most $\min(\lfloor\frac{k}{l}\rfloor, m)$.

For any fixed $[t]$, there is a canonical map \begin{equation}\label{eq: skeletal inclusion} \colim_U G_{k,l,m}(U_+) \to G_{k,l,m}([t])\end{equation} where $U$ ranges over the poset of {\em proper} subsets of $\underline{t}=\{1, \ldots, t\}$ and $U_+=U\cup \{0\}$. Recall once again that there is an isomorphism

\begin{equation}
G_{k,l,m}([t])= \bigvee_{\underset{ m_1+\cdots + m_t\le m\}}{\{(m_1, \ldots, m_t)\mid}} {\Inj(\CC^{(m_1+\cdots+m_t)l}, \CC^k)/_{\prod_{i=1}^t U(m_i)}}_+.
\end{equation}

With this isomorphism in mind, the following lemma is proved by routine manipulations of colimits
\begin{lem}
There is an isomorphism
\[\colim_{U\subsetneq \underline{t}} G_{k,l,m}(U_+)\cong
\bigvee_{\substack{ \{(m_1, \ldots, m_t)\mid m_i=0 \mbox{ \small for some }1\le i\le t \\  \mbox{ and } m_1+\cdots + m_t\le m\}}} {\Inj(\CC^{(m_1+\cdots+m_t)}, \CC^k)/_{\prod_{i=1}^t U(m_i)}}_+
\]
The map
\[
\colim_{U\subsetneq \underline{t}} G_{k,l}(U_+)\to G_{k,l}([t])
\]
corresponds, under this isomorphism, to inclusion of the wedge sum of all summands indexed by tuples $(m_1, \ldots, m_t)$ where $m_i=0$ for at least one $i$.
\end{lem}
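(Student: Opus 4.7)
The plan is to substitute the explicit wedge decomposition from~\eqref{eq: rank filtration} into the colimit and compute it summand-by-summand. For each proper subset $U\subsetneq \underline{t}$, identifying $U_+$ with a pointed finite set of cardinality $|U|+1$, we have
\[
G_{k,l,m}(U_+)\cong \bigvee_{(m_j)_{j\in U}} {\Inj(\CC^{(\sum_{j\in U} m_j)l}, \CC^k)/_{\prod_{j\in U} U(m_j)}}_+,
\]
indexed by nonzero tuples of nonnegative integers with $\sum_{j\in U} m_j \le m$. For an inclusion of proper subsets $U\subseteq V\subsetneq \underline{t}$, the induced pointed map $U_+\to V_+$ sends a tuple $(m_j)_{j\in U}$ to its extension by zero on $V\setminus U$, namely the tuple $(n_j)_{j\in V}$ with $n_j=m_j$ for $j\in U$ and $n_j=0$ for $j\in V\setminus U$. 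Under the functoriality spelled out in Proposition~\ref{prop:SC inj Mlt Mk} and in particular the map \eqref{eq: functor}, appending zero components only contributes trivial $U(0)$ factors to the product and does not alter the domain $\CC^{(\sum m_j)l}$ of the injection, so the induced map between the corresponding wedge summands is a homeomorphism.

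Next I would use that colimits in pointed spaces distribute over wedges to rewrite the colimit on the left-hand side as a wedge, indexed by all nonzero tuples $(m_1, \ldots, m_t)$ with $\sum m_i \le m$, of the colimits of the corresponding sub-diagrams. For a fixed such tuple, let $S=\{j : m_j \neq 0\}\subseteq \underline{t}$. The wedge summand indexed by the restriction of $(m_1,\ldots,m_t)$ to its support appears in $G_{k,l,m}(U_+)$ if and only if $S\subseteq U$, and the transition maps between these summands are homeomorphisms by the previous paragraph. Provided $S\subsetneq \underline{t}$, the sub-poset of proper subsets of $\underline{t}$ containing $S$ has $S$ itself as its minimum, so this sub-colimit equals the wedge summand at $S$, which is exactly the desired term ${\Inj(\CC^{(m_1+\cdots+m_t)l}, \CC^k)/_{\prod_{i=1}^t U(m_i)}}_+$. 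If instead $S=\underline{t}$, i.e., all $m_i>0$, the sub-poset is empty and contributes only the basepoint.

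Assembling these pieces yields the claimed isomorphism, whose index set is precisely the set of tuples $(m_1, \ldots, m_t)$ with $\sum m_i \le m$ and at least one $m_i=0$. The identification of the canonical map \eqref{eq: skeletal inclusion} with the inclusion of those wedge summands is then immediate from naturality: at each stage $U_+\hookrightarrow [t]$, the decomposition sends each summand of $G_{k,l,m}(U_+)$ isomorphically onto the summand of $G_{k,l,m}([t])$ indexed by the zero-extended tuple. The only point requiring care is verifying that the transition maps really are summand-wise homeomorphisms; this is a routine unwinding of the functoriality described in Proposition~\ref{prop:SC inj Mlt Mk} and the Remark immediately following it.
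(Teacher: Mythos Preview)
Your proposal is correct and is precisely the ``routine manipulation of colimits'' the paper alludes to without spelling out; the paper gives no proof beyond that phrase, and your argument fills in exactly the expected details. The only step worth making slightly more explicit is the interchange $\colim_U \bigvee_\tau F_\tau(U)\cong \bigvee_\tau \colim_U F_\tau(U)$: this holds because wedge is the coproduct in $\Top$ and colimits commute with colimits, once you have reindexed all wedges by tuples on $\underline{t}$ (extending by zero) so that the index set is fixed, as you do.
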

It follows that the map~\eqref{eq: skeletal inclusion} is an inclusion of a union of path components. Furthermore, the action of $\Sigma_t$ on the quotient space of this inclusion is free. This means that~\eqref{eq: skeletal inclusion} is a $\Sigma_t$-equivariant cofibration, and this in turn means that as a functor on $\Fin$, $G_{k,l,m}$ is cofibrant in the model structure defined for $\Gamma$-spaces in~\cite{BF, Lyd1} (technically, there references work with $\Gamma$-simplicial sets, but an analogous structure exists for $\Gamma$-spaces). This model structure is also discussed in~\cite{BM} as an example of a generalized Reedy model structure. We will refer to this model structure simply as the Reedy model structure.

Let us note that the quotient space of~\eqref{eq: skeletal inclusion} is given by the wedge sum
\[
\bigvee_{\substack{ \{(m_1, \ldots, m_t)\mid m_1,\ldots, m_t \ge 1 \\  \mbox{ and } m_1+\cdots + m_t\le m\}}} {\Inj(\CC^{(m_1+\cdots+m_t)l}, \CC^k)/_{\prod_{i=1}^t U(m_i)}}_+.
\]
Notice that the quotient is trivial for $t>m$ and for $t>\lfloor\frac{k}{l}\rfloor$.  In the terminology of~\cite{BF, Lyd1}, this means that the $\Gamma$-space $G_{k,l,m}$ is $\min(m, \lfloor\frac{k}{l}\rfloor)$-skeletal.
This implies that $G_{k,l,m}$ is determined on $\Fin$, via left Kan extension, by its restriction to the subcategory of sets of cardinality at most $\min(m, \lfloor\frac{k}{l}\rfloor)$.

We want to verify that $G_{k,l}$ is equivalent, as a functor on $\FCW$, to both the strict and the derived left Kan extension of the restriction of $G_{k,l,m}$ to $\Fin$, and even to $\Fin^{\le \min(m, \lfloor\frac{k}{l}\rfloor)}$, where $\Fin^{\le j}\subset \Fin$ is the full subcategory consisting of $[0], \ldots, [j]$. Left Kan extension can be described as a coend. Let us introduce some notation.

Let $\Top$ denote the category of pointed compactly generated weak Hausdorff spaces with the standard model structure of Quillen \cite{Qui}. Every object in $\Top$ is fibrant, and every CW-complex is cofibrant. Suppose $\cC$ is a small category, and we have a pair of functors $F\colon \cC^{\op}\to\Top$ and $G\colon \cC\to \Top$. We denote the coend of $F$ and $G$ by $\int_s^{\cC} F\wedge G$, or $\int_s^{x\in \cC}F(x)\wedge G(x)$. The subscript $s$ is there to indicate that this is a {\it strict} coend, as opposed to the {\it homotopy} coend. Let us recall the definition and construction of the latter.


\begin{define}
For a functor $F$, let $Q_oF$ denote an objectwise cofibrant replacement of $F$ and let $Q_pF$ denote a cofibrant replacement in a projective model structure. If $\cC$ is a generalized Reedy category in the sense of~\cite{BM}, then let $Q_rF$ denote a cofibrant replacement in the Reedy model structure.
\end{define}
The following lemma is standard
\begin{lem}\label{lemma: coends}
There are natural equivalences
\[
\int_s^{\cC} Q_pF\wedge Q_pG \simeq \int_s^{\cC} Q_oF\wedge Q_pG \simeq \int_s^{\cC} Q_pF\wedge Q_oG \simeq \int_s^{\cC} Q_rF\wedge Q_rG
\]
\end{lem}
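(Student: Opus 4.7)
The goal is to exhibit each of the four strict coends as a model for the derived coend $F\otimes^L_{\cC} G$, from which all pairwise equivalences follow formally. The key technical input is homotopy invariance of the coend in a sufficiently cofibrant variable: if $F\colon \cC^{\op}\to \Top$ is projective cofibrant and $\phi\colon G\to G'$ is an objectwise weak equivalence between objectwise cofibrant functors $\cC\to \Top$, then $\int^{\cC}_s F\wedge G \to \int^{\cC}_s F\wedge G'$ is a weak equivalence, and symmetrically in the other variable. By transfinite induction on the projective cellular structure of $F$, this reduces to the case of a generating projective cell $\cC(c,-)_+\wedge i$ with $i$ a generating $\Top$-cofibration; Yoneda reduction collapses the coend to $G(c)\to G'(c)$, which is a weak equivalence between well-pointed spaces by hypothesis.

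Next I would combine this with the standard observation that projective cofibrations are objectwise cofibrations (the generators are already so), so $Q_pF$ and $Q_pG$ are automatically objectwise cofibrant. Producing a zig-zag of objectwise weak equivalences between $Q_pF$ and $Q_oF$ (and likewise for $G$)---for instance by taking a further projective cofibrant replacement of $Q_oF$ and lifting in the projective model structure---and applying the invariance step above in each variable separately yields
\[
\int^{\cC}_s Q_oF\wedge Q_pG \;\simeq\; \int^{\cC}_s Q_pF\wedge Q_pG \;\simeq\; \int^{\cC}_s Q_pF\wedge Q_oG.
\]

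For the Reedy equivalence I would invoke the Berger--Moerdijk generalized Reedy machinery. The required inputs are that Reedy cofibrant functors on a generalized Reedy category are objectwise cofibrant, and that the coend pairing $\int^{\cC}_s(-)\wedge(-)$ is a left Quillen bifunctor with respect to the Reedy model structure on both sides. Granted this, the latching filtration presents $Q_rF\wedge Q_rG$ as a sequential colimit of pushouts of free equivariant cells, and a skeleton-by-skeleton comparison to the simplicial bar construction $B_\bullet(F,\cC,G)$ realizing the homotopy coend gives $\int^{\cC}_s Q_rF\wedge Q_rG \simeq \int^{\cC}_s Q_pF\wedge Q_pG$.

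\textbf{Main obstacle.} The Reedy comparison is the most delicate step, because the generalized Reedy category $\Fin$ carries nontrivial isomorphism groups (the symmetric groups $\Sigma_t$ acting on $[t]$), and the Reedy latching maps are only cofibrations after taking free orbits of these actions into account. Ensuring that such free $\Sigma_t$-orbits pass through the strict coend in a way that reproduces the homotopy coend---rather than collapsing to a non-cofibrant strict quotient---is exactly what the Quillen bifunctor statement for the Reedy structure encodes, and is the reason the lemma pairs $Q_r$ with $Q_r$ rather than mixing Reedy with a purely projective or objectwise replacement.
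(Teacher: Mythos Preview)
Your argument for the first three coends (mixing projective and objectwise replacements) is correct and essentially the standard one; the paper does not spell these out but says ``the other equivalences are similar.'' Where you diverge is in the Reedy comparison. You invoke the Berger--Moerdijk Quillen bifunctor property for the coend pairing and then gesture at a skeleton-by-skeleton comparison with the bar construction. This is a legitimate route, though your sketch of the comparison step is vague: once you know the pairing is left Quillen for Reedy$\times$Reedy, the equivalence with the projective version follows immediately from the fact that any two left-derived functors of the same bifunctor agree, so the bar-construction detour is unnecessary.

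The paper takes a different and rather slicker path for exactly this step. Instead of working with filtrations or Quillen bifunctor axioms, it dualizes: for any target space $Z$ one has the adjunction
\[
\Map_{\Top}\Bigl(\int_s^{\cC} F\wedge G,\, Z\Bigr)\;\cong\;\Nat\bigl(F,\, \Map(G(-),Z)\bigr),
\]
and the key observation is that if $G$ is Reedy cofibrant then $\Map(G(-),Z)$ is Reedy \emph{fibrant}. Thus $\Nat(Q_rF,\Map(Q_rG,Z))$ is a derived mapping space in the Reedy model structure, while $\Nat(Q_pF,\Map(Q_oG,Z))$ is a derived mapping space in the projective model structure; since the two model structures share the same weak equivalences, these agree. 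This argument bypasses the equivariance issues you flag as the ``main obstacle'' entirely---no latching filtrations, no free $\Sigma_t$-cells, no pushout-product checks---at the cost of the single observation that Reedy cofibrancy dualizes to Reedy fibrancy under cotensoring. Your approach has the virtue of being the ``expected'' one and generalizes readily to other enriched settings; the paper's buys brevity by exploiting the specific self-duality of the Reedy condition.
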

\begin{proof}
Let us prove, for example, the equivalence $\int_s^{\cC} Q_pF\wedge Q_oG \simeq \int_s^{\cC} Q_rF\wedge Q_rG$. The other equivalences are similar. It is enough to prove that for all spaces $Z$ there is an equivalence, natural in $Z$
\[
\Map_{\Top}\left(\int_s^{\cC} Q_pF\wedge Q_oG, Z\right) \simeq \Map_{\Top}\left(\int_s^{\cC} Q_rF\wedge Q_rG, Z\right).
\]
Note that these are derived mapping spaces as the source is cofibrant and the target fibrant. For any covariant/contravariant pair of functors $F, G$, there is a homeomorphism
\[
\Map_{\Top}\left(\int_s^{\cC} F\wedge G, Z\right) \cong \Nat(F, \Map(G, Z)).
\]
Therefore it is enough to show that there is a natural equivalence
\begin{equation}\label{eq: nats}
\Nat\left(Q_pF, \Map( Q_oG, Z)\right) \simeq \Nat\left(Q_rF, \Map(Q_rG, Z)\right).
\end{equation}
The key observation is that if $Q_rG(-)$ is Reedy cofibrant then the functor $\Map(Q_rG(-), Z)$ is Reedy fibrant. The two functors $\Map( Q_oG(-), Z)$ and $\Map(Q_rG(-), Z)$ are weakly equivalent functors from $\cC^{\op}$ to $\Top$. They are fibrant in the projective and the Reedy model structure respectively. On the other hand, the functors $Q_pF$ and $Q_rF$ are weakly equivalent functors that are cofibrant in the projective and the Reedy model structure. It follows that the two sides of~\eqref{eq: nats} are the derived mapping spaces from $F$ to $\Map(Q_o(G), Z)$ in the projective and the Reedy model structure respectively. Since the two model structures have the same week equivalences, the two derived mapping spaces are equivalent.
\end{proof}
\begin{define}
The homotopy coend of $F$ and $G$, denoted by $\int_h^{\cC} F\wedge G$, is defined to be any one of the equivalent coends in Lemma~\ref{lemma: coends}.
\end{define}

\begin{rem}
As mentioned in the introduction, we identify the $\infty$-localization of $\Top$ with the $\infty$-category of pointed spaces $\Top_\infty=\cS_*.$
Let $F_\infty\colon \cC^{\op}\to\cS_*$ and $G_\infty\colon \cC\to \cS_*$ be the compositions of $F$ and $G$ with the localization functor $\Top\to\Top_\infty$. Then it is known (see for example \cite[Proposition 2.5.6]{BHH}) that the image of the homotopy coend
of $F$ and $G$ under the localization functor $\Top\to\Top_\infty$ is equivalent to the $\infty$-coend of $F_\infty$ and $G_\infty$.
In the sequel we will sometimes abuse notation and identify the homotopy coend with the $\infty$-coend and more generally homotopy colimits with $\infty$-colimits.
\end{rem}

The case we are interested in is of the covariant functor $G_{k,l,m}\colon \Fin\to \Top$ and the contravariant functor $X^{-}\colon \Fin^{\op}\to \Top$, where $X$ is a pointed finite CW complex. The strict coend $\int^{[t]\in \Fin}_{\mathrm s}X^t\wedge G_{k,l,m}([t])$ is a model for the strict (continuous) Kan extension of $G_{k,l,m}$ from $\Fin$ to $\Top$. The homotopy coend
of same functors is a model for the homotopy Kan extension. 
We saw above that $G_{k,l,m}$ is cofibrant in the Reedy model structure. The functor $X^{-}$ is also Reedy cofibrant if $X$ is a CW complex. This amounts to saying that for all $t$, the inclusion of the fat diagonal $\Delta^tX$ into $X^t$ is a $\Sigma_t$-equivariant cofibration. Since both functors are Reedy cofibrant, their homotopy coend is in fact equivalent to the strict coend.

\begin{lem}\label{lemma: strict is homotopy}
Let $X$ be a pointed CW complex and $j$ any integer satisfying $\infty \ge j\ge \min(m, \lfloor\frac{k}{l}\rfloor)$. All the maps in the following diagram are equivalences
\[
\begin{array}{ccc}
\int^{[t]\in \Fin^{\le j}}_{\mathrm s}X^t\wedge G_{k,l,m}([t]) & \to & \int^{[t]\in \Fin}_{\mathrm s}X^t\wedge G_{k,l,m}([t]) \\
\downarrow & &\downarrow \\
\int^{[t]\in  \Fin^{\le j}}_{\mathrm h}X^t\wedge G_{k,l,m}([t]) & \to & \int^{[t]\in \Fin}_{\mathrm h}X^t\wedge G_{k,l,m}([t])
\end{array}
\]
\end{lem}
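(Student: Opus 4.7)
The plan is to split the square into two vertical comparisons (strict versus homotopy coend) and two horizontal comparisons (restriction from $\Fin$ to $\Fin^{\le j}$), and treat each class in turn; the fourth map then follows by commutativity and two-out-of-three. Both reductions rely only on facts already established earlier in the section.

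For the vertical maps I would simply invoke Lemma~\ref{lemma: coends}. The discussion preceding this lemma already shows that $G_{k,l,m}$ is cofibrant in the Reedy model structure on $\Gamma$-spaces; the very same argument, applied inside $\Fin^{\le j}$, shows it is Reedy cofibrant there too. The hypothesis that $X$ is a pointed CW complex was used in the paragraph above to note that $X^{-}$ is Reedy cofibrant on $\Fin^{\op}$: the latching inclusion at $[t]$ is the fat-diagonal inclusion $\Delta^t X \hookrightarrow X^t$, a $\Sigma_t$-equivariant cofibration. Lemma~\ref{lemma: coends} then identifies the strict coend with the homotopy coend on both the $\Fin^{\le j}$ and the $\Fin$ side.

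For the horizontal maps the key input is the skeletality statement established above: $G_{k,l,m}$ is $\min(m,\lfloor k/l\rfloor)$-skeletal, so that whenever $j \ge \min(m, \lfloor k/l \rfloor)$ the counit
\[
\mathrm{LKan}_{\Fin^{\le j}}^{\Fin}\bigl(G_{k,l,m}|_{\Fin^{\le j}}\bigr) \to G_{k,l,m}
\]
is an isomorphism. Plugging this into the standard adjunction between left Kan extensions and coends immediately yields that the top horizontal map is a homeomorphism. For the bottom horizontal map one can either derive the same statement, noting that everything in sight is already Reedy cofibrant so that the derived Kan extension agrees with the strict one, or simply apply commutativity of the square together with the three equivalences already proved. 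The only genuinely substantive step is the skeletality observation, which is itself a direct consequence of the explicit wedge-sum description of $G_{k,l,m}([t])$ in Proposition~\ref{prop:SC inj Mlt Mk}: the latching quotient at $[t]$ is the wedge over tuples $(m_1,\dots,m_t)$ with every $m_i\ge 1$ and $\sum m_i \le m$, which is a point as soon as $t > \min(m,\lfloor k/l\rfloor)$. Granting that, the rest of the argument is a routine combination of Reedy cofibrancy and the universal property of left Kan extension.
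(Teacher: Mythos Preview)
Your proposal is correct and follows essentially the same approach as the paper's proof: the vertical maps are equivalences because both $X^{-}$ and $G_{k,l,m}$ are Reedy cofibrant, and the top horizontal map is an equivalence because $G_{k,l,m}$ is $\min(m,\lfloor k/l\rfloor)$-skeletal, with the remaining map handled by two-out-of-three. You have simply supplied more of the details (the explicit description of the latching quotient, the remark that Reedy cofibrancy persists on $\Fin^{\le j}$) that the paper leaves implicit.
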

\begin{proof}
The vertical maps are equivalences because the functors $X^-$ and $G_{k,l,m}$ are each Reedy cofibrant. The top map is an equivalence because $G_{k,l,m}$ is $j$-skeletal.
\end{proof}
Now comes the main result of this section: $G_{k,l,m}$ is equivalent to both the strict and the derived left Kan extension of its restriction to the category of finite sets of size at most $\min(m, \lfloor\frac{k}{l}\rfloor)$.
\begin{thm}\label{t:polynom inverse}
For all $k, l, m,$ and $\infty\ge j\ge \min(m, \lfloor\frac{k}{l}\rfloor)$, the functor $G_{k,l,m}$ is equivalent to both the strict and the derived left Kan extension of $G_{k,l,m}|_{\Fin^{\le j}}$ along the inclusion $\Fin^{\le j}\subseteq \CW$.
\end{thm}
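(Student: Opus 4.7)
My plan is to reduce the theorem to a single assertion and then prove that assertion by induction on the rank $m$. Specifically, by Lemma~\ref{lemma: strict is homotopy}, the hypothesis $j \ge \min(m, \lfloor\frac{k}{l}\rfloor)$ together with the Reedy cofibrancy of $G_{k,l,m}$ and of the functor $X^{(-)}$ (established earlier in this section) gives a chain of natural equivalences identifying all four of the left Kan extensions (strict or derived, over $\Fin^{\le j}$ or over $\Fin$) of $G_{k,l,m}|_{\Fin^{\le j}}$ at any pointed CW-complex $X$. It therefore suffices to exhibit a natural homeomorphism
\[
\epsilon_X \noloc \int^{[t]\in \Fin}_{\mathrm s} X^t \wedge G_{k,l,m}([t]) \;\xrightarrow{\;\cong\;}\; G_{k,l,m}(X)
\]
for every $X \in \FCW$, and both halves of the theorem will follow.

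The map $\epsilon_X$ exists by continuity of $G_{k,l,m}$: for each $t$, a pointed function $h \colon [t] \to X$ and an element $g \in G_{k,l,m}([t])$ produce $G_{k,l,m}(h)(g) \in G_{k,l,m}(X)$, and these pairings are manifestly compatible with the morphisms of $\Fin$. To show $\epsilon_X$ is a homeomorphism I would proceed by induction on $m$. The base case $m=0$ is trivial. For the inductive step, the Reedy-cofibrant inclusion $G_{k,l,m-1}([t]) \hookrightarrow G_{k,l,m}([t])$ has a Reedy-cofibrant cofiber, so smashing with $X^t$ and taking the strict coend preserves the cofiber sequence; a parallel cofiber sequence exists on the target side coming from the rank filtration of $G_{k,l,m}(X)$, and $\epsilon_X$ respects both filtrations. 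Thus the inductive step reduces to identifying the two resulting subquotients.

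The target subquotient $G_{k,l,m}(X)/G_{k,l,m-1}(X)$ is described by Lemma~\ref{lem: factorization}: each rank-$m$ element $f$ is encoded by a finite support $F_f \subset X \setminus \{*\}$ together with a rank-$m$ monomorphism $\tC_0(F_f \cup \{*\}, M_l) \hookrightarrow M_k$, which via Proposition~\ref{prop:SC inj Mlt Mk} corresponds to a point in the top summand $\bigvee_{\{(m_1,\ldots, m_t)\mid m_i \ge 1,\; \sum m_i = m\}} \Inj(\CC^{ml}, \CC^k)/\prod_i U(m_i)$; assembling over $t = |F_f|$ yields a configuration-space-style wedge. On the source side, the strict coend over $\Fin$ of $X^t \wedge (G_{k,l,m}([t])/G_{k,l,m-1}([t]))$ admits the same description: morphisms in $\Fin$ that identify indices force the corresponding tuples in $X^t$ to collide onto the basepoint, and morphisms that collapse an index kill wedge summands with some $m_i = 0$, leaving exactly the same configuration-space wedge; under these identifications $\epsilon_X$ is the identity. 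The main technical obstacle is upgrading the bijection produced by Lemma~\ref{lem: factorization} into a verified homeomorphism on subquotients: one must check that $f \mapsto F_f$ is continuous on each rank stratum and that the resulting bijection from the configuration-space model to $G_{k,l,m}(X)/G_{k,l,m-1}(X)$ is both continuous and open. These properties should follow from the compactness of $X$ and the explicit pointwise norm topology on $\Csep(\tC_0(X, M_l), M_k)$, but they require care.
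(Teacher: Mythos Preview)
Your reduction via Lemma~\ref{lemma: strict is homotopy} is correct and matches the paper exactly: the theorem does reduce to showing that the assembly map $\epsilon_X$ is a homeomorphism for each finite pointed CW-complex $X$.

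Where you diverge is in attacking $\epsilon_X$ by induction on $m$ through the rank filtration. The paper instead proves directly that $\epsilon_X$ is a bijection for the given $m$: surjectivity comes from Lemma~\ref{lem: factorization} (just as you use it), and injectivity is a short argument showing that any two representatives $(\alpha,g)$ and $(\alpha_1,g_1)$ with the same image can, after first replacing $\alpha,\alpha_1$ by injective maps, both be pushed down to a common representative indexed by the minimal support $F_f$. The paper then finishes in one line: since $X$ is a finite CW-complex each $X^t\wedge G_{k,l,m}([t])$ is compact, and since $G_{k,l,m}$ is $\min(m,\lfloor k/l\rfloor)$-skeletal the coend is a quotient of a finite wedge of such spaces, hence compact; the target $G_{k,l,m}(X)$ is Hausdorff; a continuous bijection from a compact space to a Hausdorff space is a homeomorphism.

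This compact-source/Hausdorff-target observation is precisely what dissolves your ``main technical obstacle,'' and it is missing from your outline. You never need to show that $f\mapsto F_f$ is continuous---indeed it is not, even on a fixed rank stratum, since $|F_f|$ can jump as points of $X$ collide (e.g.\ multiplicity $(1,1)$ degenerating to $(2)$ when $m=2$)---nor do you need to build an explicit inverse. All that is required is that $\epsilon_X$ be continuous (automatic), bijective, and have compact source and Hausdorff target. Your inductive route is not wrong in principle, but the identification of subquotients you defer would, to be made rigorous, require exactly this compactness argument on each stratum; so the induction buys nothing, and as written the hardest step is left undone.
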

\begin{proof}
By lemma~\ref{lemma: strict is homotopy}, the map from the derived left Kan extension to the strict one is an equivalence. So it is enough to prove the statement for strict Kan extension. So throughout this proof, $\int$ stands for the strict coend. There is a natural assembly map
\begin{equation}\label{eq: assembly}
\int^{[t]\in \Fin} X^t \wedge G_{k,l,m}([t])\to G_{k,l,m}(X).
\end{equation}
We will prove that it is a homeomorphism, if $X$ is a finite CW complex. This is enough for proving the theorem. Thus we need to prove that~\eqref{eq: assembly} is bijective and bi-continuous.

First we prove surjectivity. Let
$$f\in G_{k,l,m}(X)\subseteq{\Csep}(\tC_0(X, M_l),M_k).$$ Factor $f$ using lemma \ref{lem: factorization}.
Denote $t = |F_f|$ and choose a pointed bijection $[t]\xrightarrow{\cong} F_f\cup \{*\}$. It follows that $f$ admits a factorization
\[
\tC_0(X, M_l) \to M_l^t\stackrel{f'}{\to} M_k.
\]
Note that since $V_f = V_f'$ we have $f' \in G_{k,l,m}([t])$
This means that $f$ is in the image of the map $X^t\wedge G_{k,l,m}([t])\to G_{k,l,m}(X)$. Thus $f$ is in the image of the assembly map. Since $f$ was an arbitrary element of $G_{k,l,m}(X)$, we have proved surjectivity of the assembly map.

Next we show that the assembly map is injective. Suppose that $(\alpha, g)\in X^{t} \wedge G_{k,l,m}([t])$ and $(\alpha_1, g_1)\in X^{t_1} \wedge G_{k,l,m}([t_1])$ represent two elements of $\int^{[t]\in \Fin} X^t \wedge G_{k,l}([t])$ that are mapped to the same element of $G_{k,l,m}(X)$. We have to show that $(\alpha, g)$ and $(\alpha_1, g_1)$ represent the same element of $\int^{[t]\in \Fin} X^t \wedge G_{k,l}([t])$. Without loss of generality we may assume that the functions $\alpha\colon [t]\to X$ and $\alpha_1\colon [t_1]\to X$ are injective. Indeed, suppose for example that $\alpha$ is not injective. Then $\alpha$ can be factored as a surjection followed by injection, say $[t]\twoheadrightarrow [t']\stackrel{\alpha'}{\hookrightarrow} X$. Let $g'$ be the image of $g$ under the map $G_{k,l,m}([t])\to G_{k,l,m}([t'])$. Then $(\alpha', g')$ represents the same element as $(\alpha, g)$ in the coend.

Assuming $\alpha$ and $\alpha_1$ are injective, let $f$ be the common image of $(\alpha, g)$ and $(\alpha_1, g_1)$ in $G_{k,l,m}(X)$. Then there is a unique subset $F\subset X\setminus\{*\}$ such that $f$ factors as in lemma~\ref{lem: factorization}. By the minimality of $F$, $F$ is contained in the image of $\alpha$ and in the image of $\alpha_1$. Choose a pointed bijection $\alpha'\colon [t_0]\stackrel{\cong}{\to}F\cup \{*\}$. This bijection factors through $\alpha$ and $\alpha_1$.
By slight abuse of notation, let use $\alpha'$ to denote also the element of $X^{t_0}$ that is the composed map $[t_0]\stackrel{\alpha'}{\to} F\cup \{*\} \hookrightarrow X$. Putting it all together we obtain a commutative diagram

\begin{diagram}
& \tC_0(X, M_l) & \rTo^{\alpha^*} & M_l^t & & \\
& \dTo^{\alpha_1^*} &\rdOnto^{\alpha'^*} \rdTo(5,2)^f & \dTo  & \\
& M_l^{t_1} & \rTo & M_l^{t_0} & \rInto^{f'} & &  M_k&.
\end{diagram}
It follows that $(\alpha, g)$ and $(\alpha_1, g_1)$ are both mapped to the same element $(\alpha', f')$ of the coend $\int^{[t]\in \Fin} X^t \wedge G_{k,l,m}([t])$.


We have shown that the assembly map~\eqref{eq: assembly} is a bijection. Its domain is compact and its codomain is Hausdorff, so it is a homeomorphism.
%
%
%
%
\end{proof}

\begin{cor}\label{cor: l le k}
If $l>k$ then $\SSS^{k,l}\simeq *$.
\end{cor}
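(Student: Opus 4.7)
The plan is to show directly that the unstable functor $G_{k,l}$ is trivial whenever $l>k$, and then stabilize. The cleanest route uses the rank filtration bookkeeping already established.

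First, I would observe that by Lemma~\ref{l:lele}, for any finite pointed CW complex $X$ and any
\[
f\in G_{k,l}(X)=\Csep(\tC_0(X,M_l),M_k),
\]
we have $l\cdot|F_f|\le\dim V_f\le k$. When $l>k$, this forces $|F_f|=0$, i.e.\ $F_f=\emptyset$, so by the factorization of Lemma~\ref{lem: factorization} the map $f$ factors through $\tC_0(\{*\},M_l)=0$, and therefore $f$ is the zero $*$-homomorphism. Hence $G_{k,l}(X)=\{*\}$ for every $X\in\FCW$.

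Equivalently, one can read this off the remark following the definition of the rank filtration: since $\lfloor k/l\rfloor=0$ when $l>k$, we have $G_{k,l}=G_{k,l,\lfloor k/l\rfloor}=G_{k,l,0}$, and $G_{k,l,0}(X)=\{*\}$ by definition. (One may also deduce it from Proposition~\ref{prop:SC inj Mlt Mk} combined with Theorem~\ref{t:polynom inverse}: each wedge summand $\Inj(\CC^{(m_1+\cdots+m_t)l},\CC^k)/_{\prod U(m_i)}$ indexed by a nonzero tuple is empty because $(m_1+\cdots+m_t)l\ge l>k$, so $G_{k,l}([t])=\{*\}$ for every $t$, and the strict/derived left Kan extension of a trivial $\Gamma$-space is trivial.)

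Finally, since $G_{k,l}(S^n)\simeq *$ for every $n\ge 0$, we conclude
\[
\SSS^{k,l}\simeq\partial_1 G_{k,l}\simeq\hocolim_n\Sigma^{-n}\Sigma^{\infty}G_{k,l}(S^n)\simeq *,
\]
as desired. There is no real obstacle here; the only subtlety is choosing which of the equivalent arguments is the most concise, and I would favor the Lemma~\ref{l:lele} argument because it exposes the geometric reason (a nonzero $C^*$-homomorphism $M_l^t\to M_k$ forces $l\le k$) most transparently.
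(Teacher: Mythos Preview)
Your argument is correct, and in fact your primary route via Lemma~\ref{l:lele} is more direct than the paper's: the paper argues only that $G_{k,l}$ is trivial on $\Fin$ (using Proposition~\ref{prop:SC inj Mlt Mk}) and then invokes Theorem~\ref{t:polynom inverse} to conclude it is trivial on all of $\FCW$, whereas your use of Lemma~\ref{l:lele} (and the underlying factorization Lemma~\ref{lem: factorization}) applies to an arbitrary finite pointed CW-complex $X$ from the outset, so no Kan-extension step is needed. Your parenthetical third alternative is exactly the paper's proof. Both are fine; yours is shorter and, as you say, makes the underlying representation-theoretic obstruction visible, while the paper's route fits the narrative of Section~\ref{section: cofibrant}, where the point is precisely that everything is controlled by the values on finite sets.
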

\begin{proof}
It follows immediately from Proposition~\ref{prop:SC inj Mlt Mk} that if $l>k$ then the restriction of $G_{k,l}$ to $\Fin$ is trivial, i.e., $G_{k,l}([t])\cong *$ for every finite pointed set $[t]$. By Theorem~\ref{t:polynom inverse} $G_{k,l}$ is equivalent to the homotopy left Kan extension of the restriction of $G_{k,l}$ to $\Finsk$, so $G_{k,l}\simeq *$. Since $\mathbb{S}^{k,l}$ is the stabilization of $G_{k,l}$, it follows that $\mathbb{S}^{k,l}\simeq *$ as well.
\end{proof}

\subsection{The first stage of the rank filtration}\label{section: first stage}
The functor $G_{k,l,1} $ is especially well behaved.
The following lemma is an easy consequence of Theorem  \ref{t:polynom inverse} taken with $j=1$. We also give a direct proof
\begin{lem}
Let $l,k\geq 1$ and let  $X$  be a finite pointed CW-complex. Then the assembly map
$$a_X\colon X \wedge G_{k,l,1}(S^0) \to G_{k,l,1}(X) $$
is a homeomorphism.
\end{lem}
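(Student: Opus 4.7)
The plan is to deduce this lemma as an almost immediate consequence of Theorem~\ref{t:polynom inverse}, together with a simple calculation of a coend over the very small category $\Fin^{\le 1}$. An alternative route is a direct geometric argument using Lemma~\ref{lem: factorization} and Lemma~\ref{l:lele}; I sketch both, since they illuminate different aspects.

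First, I would apply Theorem~\ref{t:polynom inverse} to the functor $G_{k,l,1}$ with $j=1$, which is legal because $\min(1,\lfloor k/l\rfloor)\le 1$. This gives a homeomorphism
\[
\int_s^{[t]\in \Fin^{\le 1}} X^t \wedge G_{k,l,1}([t]) \stackrel{\cong}{\to} G_{k,l,1}(X)
\]
under the assembly map. It therefore suffices to identify the left-hand coend with $X\wedge G_{k,l,1}(S^0)$ in a way compatible with the assembly map $a_X$. The category $\Fin^{\le 1}$ has only the two objects $[0]$ and $[1]$, and only finitely many morphisms: the identities, the unique map $[0]\to [1]$, the unique map $[1]\to [0]$, and the constant endomorphism $c\colon [1]\to [1]$ that sends $1$ to $0$.

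Next I would compute the values of $G_{k,l,1}$ on these objects using the rank-filtration description \eqref{eq: rank filtration}: since the only non-basepoint multi-set $(m_1,\ldots,m_t)$ with $t\le 1$ and $m_1+\cdots+m_t\le 1$ is the tuple $(1)$, we get $G_{k,l,1}([0])=\ast$ and $G_{k,l,1}([1])\cong \Inj(\CC^l,\CC^k)/_{U(1)+} = G_{k,l,1}(S^0)$. With $G_{k,l,1}([0])=\ast$, the only non-trivial summand of the coend comes from the $t=1$ term, namely $X\wedge G_{k,l,1}(S^0)$. The coend relations coming from the constant map $c\colon[1]\to[1]$, and from the unique maps to/from $[0]$, all collapse the pairs $(\ast,y)$ and $(x,\ast)$ to the single basepoint, which is exactly the relation defining the smash product. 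Thus the coend is naturally homeomorphic to $X\wedge G_{k,l,1}(S^0)$, and tracing through the definitions identifies this homeomorphism with $a_X$.

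The direct proof mirrors that of Theorem~\ref{t:polynom inverse}. Given $f\in G_{k,l,1}(X)$, Lemma~\ref{l:lele} forces $l\cdot |F_f|\le \dim V_f = l\cdot\mathrm{rank}(f)\le l$, so $|F_f|\le 1$. Hence either $f$ is the zero homomorphism or $F_f=\{x\}$ for a unique $x\in X\setminus\{*\}$, and Lemma~\ref{lem: factorization} yields a unique factorization
\[
\tC_0(X,M_l)\twoheadrightarrow \tC_0(\{x,*\},M_l)\cong M_l \stackrel{f'}{\rightarrowtail} M_k,
\]
where $f'\in G_{k,l,1}(S^0)$. The assignment $f\mapsto (x,f')$ produces an inverse to $a_X$, well-defined on the smash product because the zero map and the choices with $f'=0$ or $x=*$ all correspond to the basepoint. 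Surjectivity of $a_X$ is immediate, injectivity follows from uniqueness of the factorization, and the source is compact while the target is Hausdorff, so continuity of the inverse is automatic. The only potential subtlety is to verify that the identification of $G_{k,l,1}(S^0)$ with $\Inj(\CC^l,\CC^k)/_{U(1)}{}_+$ from Proposition~\ref{prop:SC inj Mlt Mk} matches the inclusion of $f'$ appearing here; this is immediate from the construction of the map~\eqref{eq: basicmap}. I expect no serious obstacle: the combinatorial smallness of $\Fin^{\le 1}$ and the rank constraint $\mathrm{rank}(f)\le 1$ together trivialize everything.
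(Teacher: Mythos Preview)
Your proposal is correct and matches the paper's treatment: the paper explicitly notes that the lemma is an easy consequence of Theorem~\ref{t:polynom inverse} with $j=1$ (your first route), and then gives the direct proof via Lemma~\ref{lem: factorization} and Lemma~\ref{l:lele} with the compact-source/Hausdorff-target argument (your second route). Your coend computation over $\Fin^{\le 1}$ and your direct bijection argument are both sound and agree with the paper's reasoning.
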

\begin{proof}
Since $X \wedge G_{k,l,1}(S^0)$ is compact and $G_{k,l,1}(X) $  is Hausdorff it is enough to show that $a_X$ is a bijection. Since $M_l$ is simple we have $G_{k,l}(S^0) = \Csep^{\mathrm{inj}}(M_l,M_k)_{+} $ where
$\Csep^{\mathrm{inj}}(M_l,M_k) \subset \Csep(M_l,M_k)$ is the space of injective $C^*$-algebra maps.
similarly we get  $G_{k,l,1}(S^0)  = \Csep^{,1}(M_l,M_k)_{+} $ where $\Csep^{,1}(M_l,M_k) \subset \Csep^{\mathrm{inj}}(M_l,M_k)$ is the space of maps $f$ with $\mathrm{rank}(f) = 1$.
We get that $(X \wedge G_{k,l,1}(S^0))\smallsetminus \{\ast\} = (X \smallsetminus \{\ast\}) \times \Csep^{,1}(M_l,M_k)$ and the map $a_X(x_0,f)\in G_{k,l,1}(X)  \subset G_{k,l}(X)$ is the composition
$$\tC_0(X, M_l) \xrightarrow{x_0^*} M_l  \xrightarrow{f} M_k.$$
The injectivity now follows from the uniqueness of the factorisation in lemma~\ref{lem: factorization} and the surjectivity from the  existence  in lemma \ref{lem: factorization}  and  lemma \ref{l:lele}.
\end{proof}
\begin{cor}
Let $l,k\geq 1$, the natural map
$$ \Sigma^{\infty}G_{k,l,1}(S^0) \to \SSS^{k,l,1}$$
is an equivalence.
\end{cor}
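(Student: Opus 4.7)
The plan is to combine the previous lemma, which gives a homeomorphism $a_X\colon X\wedge G_{k,l,1}(S^0)\xrightarrow{\cong} G_{k,l,1}(X)$ for every finite pointed CW-complex $X$, with the defining formula
\[
\SSS^{k,l,1}\simeq \hocolim_n \Sigma^{-n}\Sigma^{\infty}G_{k,l,1}(S^n).
\]
Specializing the previous lemma to $X=S^n$ gives $G_{k,l,1}(S^n)\cong S^n\wedge G_{k,l,1}(S^0)$, naturally in $n$. Moreover one should check (this is a routine diagram chase, using the naturality of the assembly map with respect to the smash product $S^1\wedge(-)$) that, under this identification, the bonding maps of the prespectrum $\{G_{k,l,1}(S^n)\}_n$ correspond to the canonical suspension isomorphisms $S^1\wedge S^n\wedge G_{k,l,1}(S^0)\xrightarrow{\cong} S^{n+1}\wedge G_{k,l,1}(S^0)$.

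With this identification, each structure map $\Sigma^{-n}\Sigma^\infty G_{k,l,1}(S^n)\to \Sigma^{-n-1}\Sigma^\infty G_{k,l,1}(S^{n+1})$ becomes an equivalence, indeed it is the canonical equivalence $\Sigma^{-n}\Sigma^\infty(S^n\wedge Y)\simeq \Sigma^\infty Y$ for $Y=G_{k,l,1}(S^0)$. Consequently the homotopy colimit of the sequence is equivalent to its first term, namely $\Sigma^\infty G_{k,l,1}(S^0)$. Under this chain of identifications the map in the statement is precisely the canonical inclusion of the first stage into the colimit, so it is an equivalence.

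The only step that requires any real care is the naturality verification: one has to trace through the definition of the structure maps of the spectrum associated to an object of $\SpM$ and check that they commute with the homeomorphisms $a_{S^n}$ supplied by the previous lemma. This is essentially forced by the fact that $a_X$ is itself natural in $X\in\FCW$ and that its definition passes through the cotensoring of $\Csep$ over $\FCW$; no additional input is needed beyond the previous lemma. Thus there is no substantive obstacle, and the corollary is essentially a formal consequence.
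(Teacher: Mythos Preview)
Your argument is correct and is precisely the intended one: the paper states this corollary without proof, treating it as an immediate consequence of the preceding lemma, and your unwinding of the stabilization formula via the homeomorphisms $a_{S^n}$ is exactly how one fills in the details.
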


Let $\mathbb{P}\Inj$ be the topologically enriched  symmetric  monoidal  category of finite positive dimensional  Hilbert spaces  and isometric embeddings up to scalar. That is, up to isomorphism  objects are given by $\mathbb{C}^k$ for $k \in \mathbb{Z}_{\geq 1}$ and $$\mathbb{P}\Inj (\mathbb{C}^l , \mathbb{C}^k) = U(k)/ (U(1) \times U(k-l)). $$
The symmetric monoidal structure is given by  the tensor product.
We have a topologically  enriched symmetric monoidal functor
$$\End\colon\mathbb{P}\Inj  \to  \Csep,$$
that sends the Hilbert space $V$ to the $C^*$-algebra $\End(V)$ of linear maps $V\to V$ and the embedding $i\colon V \to W$ is sent to the $*$-homomorphism $\End(V) \to \End(W)$ sending $A \in \End(V)$ to $i \circ A \circ i^{-1}\circ p \in \End(W)$, where $p\colon W \to \im(i)$ is the orthogonal projection. The monoidal coherence maps of $\End$ are given by the natural isomorphisms $$\End(V_1)\otimes \End(V_2) \xrightarrow{\sim} \End(V_1 \otimes V_2)$$
and
$$\mathbb{C} \xrightarrow{\sim} \End(\mathbb{C}).$$
\begin{lem}\label{l:pinj}
The map
$$\mathbb{P}\Inj(\mathbb{C}^l,\mathbb{C}^k)_{+} \to  \Csep(M_l,M_k)  = G_{k,l}(S^0)$$
induced by the functor $\End$ is an embedding with image $G_{k,l,1}(S^0)$.
\end{lem}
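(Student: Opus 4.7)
The strategy is to recognize this lemma as a direct consequence of Proposition~\ref{prop:SC inj Mlt Mk} applied at $t=1$, once we match the map induced by $\End$ with the specific construction used in the proof of that proposition. First, I would specialize Proposition~\ref{prop:SC inj Mlt Mk} to the pointed set $[1]$ to obtain a homeomorphism
\[
G_{k,l}(S^0) \cong \bigvee_{m\ge 1} {\Inj(\mathbb{C}^{ml}, \mathbb{C}^k)/_{U(m)}}_+,
\]
and combine it with formula~\eqref{eq: rank filtration} to conclude that $G_{k,l,1}(S^0)$ is precisely the single wedge summand indexed by $m=1$, namely $\Inj(\mathbb{C}^l,\mathbb{C}^k)/_{U(1)+}$. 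Since by definition $\mathbb{P}\Inj(\mathbb{C}^l,\mathbb{C}^k) = \Inj(\mathbb{C}^l,\mathbb{C}^k)/_{U(1)}$, the source of the lemma's map and the purported image have canonically homeomorphic underlying spaces.

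Next I would verify that the map induced by the functor $\End$ agrees, under these identifications, with the map constructed in the proof of Proposition~\ref{prop:SC inj Mlt Mk} restricted to the $m=1$ summand. Unwinding that proof, an isometric inclusion $i\colon\mathbb{C}^l\hookrightarrow\mathbb{C}^k$ is sent to the $*$-homomorphism $M_l\to M_k$ in which $M_l$ acts on $\im(i)$ by the standard action transported through $i$, and acts by zero on $\im(i)^\perp$. This is exactly the formula $A\mapsto i\circ A\circ i^{-1}\circ p$ with $p$ the orthogonal projection onto $\im(i)$, which is the definition of $\End(i)$. The same formula also shows that $\End(\lambda i)=\End(i)$ for any $\lambda\in U(1)$, so the map descends through $\mathbb{P}\Inj(\mathbb{C}^l,\mathbb{C}^k)$ as claimed.

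Once the two maps are identified, everything follows by restriction: Proposition~\ref{prop:SC inj Mlt Mk} asserts that the total map on the wedge is a homeomorphism, so its restriction to any individual wedge summand is a topological embedding with image that summand. The only genuinely non-formal step is the dictionary between the $\End$ formula and the Bratteli-style construction in Proposition~\ref{prop:SC inj Mlt Mk}; I don't anticipate any real obstacle beyond this bookkeeping, and the $U(1)$-invariance of $\End(i)$ is the reason the projective unitary group appears in $\mathbb{P}\Inj$.
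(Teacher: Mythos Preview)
Your argument is correct and is a clean specialization of Proposition~\ref{prop:SC inj Mlt Mk}: once you identify the $m=1$ summand of the homeomorphism~\eqref{eq: algebra morphisms} with the map induced by $\End$, everything follows formally. The paper takes a slightly different, more self-contained route. Rather than appealing to the general structure theorem for $G_{k,l}([t])$, it argues directly: it computes $V_{\End(i)}=\im(i)$ to see the image lands in $G_{k,l,1}(S^0)$, then proves surjectivity onto rank-$1$ homomorphisms using that every $*$-automorphism of $M_l$ is inner, proves injectivity using that the center of $M_l$ consists of scalars, and finally concludes the bijection is a homeomorphism via the compact--Hausdorff trick.

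The trade-off is this: your approach is more economical and avoids redoing representation theory already packaged in Proposition~\ref{prop:SC inj Mlt Mk}, but it requires unpacking the \emph{proof} of that proposition (specifically the construction~\eqref{eq: basicmap}) to match it with the $\End$ formula. The paper's approach is independent of that proposition and makes the algebraic reasons for injectivity and surjectivity (inner automorphisms, scalar center) visible in the proof itself. Both are short; yours is arguably the more natural choice given the logical placement of the lemma after Proposition~\ref{prop:SC inj Mlt Mk}.
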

\begin{proof}
We first show that the map is surjective on $G_{k,l,1}(S^0)$.
Let $i\colon \mathbb{C}^l \to \mathbb{C}^k $ be an isometric embedding.  The map $f_i = \End(i) \in \Csep(M_l,M_k)$ clearly satisfies $V_{f} = \mathrm{Im}(i) \subseteq \mathbb{C}^k$ and thus $\mathrm{rank}(f) = \frac{\dim V_{f_i}}{l} = \frac{l}{l} = 1$. On the other hand  if $f_i\in G_{k,l,1}(S^0)$,  then either $f_i=0$ and thus is in the image of the base point or $\dim V_{f_i} = 1$. In the case  $\dim V_{f_i} = 1$ we get that the map $f$ factors as an isomorphism followed by an injection.
$$M_l  \xrightarrow{\sim} \End(V_{f_i}) \to M_k $$
subjectivity now follows from the fact that   every automorphism  of  $M_l$ is inner.
We now prove injectivity. First since $V_{f_i} = \mathrm{Im}(i) $, $f_i$ determines $\mathrm{Im}(i)$.
We are thus reduced to show that if two embeddings $i,j \colon \mathbb{C}^l \to \mathbb{C}^k$ have the same image $V$ and the induced maps $M_l \to \End(V)$ are the same then $i$ and $j$ differ by a scalar. This follows from the fact that  the center of $M_l$ is exactly the scalar matrices.
We thus get a continuous  bijection $\mathbb{P}\Inj(\mathbb{C}^l,\mathbb{C}^k)_{+} \to  G_{k,l,1}(S^0)$. Since
it has compact domain and Hausdorff  target, it is a homeomorphism.
\end{proof}
Applying the topological nerve to $\mathbb{P}\Inj$ we get a symmetric monoidal $\infty$-category $\mathbb{P}\Inj_{\infty}$. We thus get that $\End$ induces a symmetric monoidal functor
$$\widetilde{\End} \colon \mathbb{P}\Inj_{\infty}^{\mathrm{op}} \to \NCW. $$
Composing with the symmetric monoidal functor $\Sigma^{\infty} \colon \NCW \to \NSp$ we get a symmetric monoidal functor
$$\Sigma^{\infty} \circ \widetilde{\End} \colon \mathbb{P}\Inj_{\infty}^{\mathrm{op}} \to \NSp. $$

For a closed symmetric monoidal $\infty$-category $\cM$ denote by $\Cat^{\otimes}_\cM$ the $\infty$-category of symmetric monoidal $\cM$-enriched $\infty$-categories.
If $\cM$ and $\cN$ are closed symmetric monoidal $\infty$-categories and $a \colon \cM \to \cN$ is a symmetric monoidal functor which admits a right adjoint $b$, then we have an induced adjunction
$$a_! \colon\Cat^{\otimes}_\cM \rightleftarrows \Cat^{\otimes}_\cN\noloc b_!.$$

We shall especially use the case where $a = \Sigma^{\infty}_+ \colon \cS \to \Sp$. Using the identification  $\Cat^{\otimes}_\cS \cong \Cat^{\otimes}$, we obtain an adjunction
$$(\Sigma_+^{\infty})_! \colon \Cat^{\otimes} \leftrightarrows  \Cat^{\otimes}_{\Sp} \noloc (\Omega^{\infty})_!$$
Given a symmetric monoidal $\infty$-category $\mathcal{C} \in \Cat^{\otimes}$ we denote
$$\mathcal{C}^{\Sp} :=(\Sigma^{\infty}_+ )_!(\mathcal{C}) \in \Cat^{\otimes}_\Sp.$$

As a stable $\infty$-category $\NSp$ is naturally $\Sp$-enriched. Thus we have natural isomorphisms
$$\Map_{\Cat^{\otimes}_{\Sp}} ((\mathbb{P}\Inj^{\Sp}_{\infty})^{\mathrm{op}},\NSp)\simeq \Map_{\Cat^{\otimes}_{\Sp}}((\Sigma_+^{\infty})_! (\mathbb{P}\Inj_{\infty}^{\mathrm{op}}),\NSp)\simeq $$ $$\Map_{\Cat^{\otimes}}(\mathbb{P}\Inj_{\infty}^{\mathrm{op}}, (\Omega^{\infty})_!\NSp)\simeq \Map_{\Cat^{\otimes}}(\mathbb{P}\Inj_{\infty}^{\mathrm{op}}, \NSp).$$
We denote the mate of $\Sigma^{\infty} \circ \widetilde{\End}\in \Map_{\Cat^{\otimes}} (\mathbb{P}\Inj_{\infty}^{\mathrm{op}}, \NSp)$ under this adjunction by
$$\widetilde{\mathrm{E}} \colon (\mathbb{P}\Inj_{\infty}^{\Sp})^\mathrm{op} \to \NSp.$$
This is a symmetric monoidal $\Sp$-enriched functor.

\begin{lem}\label{l:big}
We  get the following commutative diagram for every $k,l \geq 1$

$$\xymatrix{
\Map_{(\mathbb{P}\Inj_{\infty}^{\Sp})^\mathrm{op}}(\CC^k , \CC^l) \ar[d]^{\sim} \ar[rr]^-{\tilde{\mathrm{E}}} && \Map_{\NSp}(\Sigma^{\infty}M^k ,\Sigma^{\infty}M^k)  \ar[dd]^{\sim} \\
\Sigma^{\infty}_+ \Map_{\mathbb{P}\Inj_{\infty}}(\CC^l , \CC^k) \ar[d]^{\sim}\ar[r]^{\End}& \Sigma^{\infty}\Map_{\NCW}(M^k,M^l)\ar[d]^{\sim}\ar[ru]&\\
\Sigma^{\infty}G_{k,l,1}(S^0)\ar[rd]_{\sim}  \ar[r]&  \Sigma^{\infty}G_{k,l}(S^0) \ar[r]  & \partial_1(G_{k,l})\\
 & \partial_1(G_{k,l,1})  \ar[ru] \\
}$$
\end{lem}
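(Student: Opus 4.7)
The plan is to decompose the diagram into four essentially independent pieces: (a) the three equivalences along the left column, (b) the equivalence along the right column, (c) the two inner triangles which are commutative by the definitions of $\End$ and of stabilization, and (d) the outer triangle involving $\widetilde{\mathrm{E}}$, whose commutativity uses the universal property of the mate under the adjunction $(\Sigma^\infty_+)_! \dashv (\Omega^\infty)_!$.

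First I would dispense with the left column. The topmost equivalence $\Map_{(\mathbb{P}\Inj_{\infty}^{\Sp})^{\mathrm{op}}}(\CC^k,\CC^l)\simeq \Sigma^\infty_+\Map_{\mathbb{P}\Inj_\infty}(\CC^l,\CC^k)$ is by definition: $\mathcal{C}^{\Sp}=(\Sigma^\infty_+)_!\mathcal{C}$ is characterized (on the level of mapping spaces) by applying $\Sigma^\infty_+$ to the hom-spaces of $\mathcal{C}$. The next equivalence $\Sigma^\infty_+\Map_{\mathbb{P}\Inj_\infty}(\CC^l,\CC^k)\simeq \Sigma^\infty G_{k,l,1}(S^0)$ is obtained by applying $\Sigma^\infty$ to the homeomorphism of Lemma \ref{l:pinj}. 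Finally, the equivalence $\Sigma^\infty G_{k,l,1}(S^0)\simeq \partial_1(G_{k,l,1})$ is the corollary proved immediately before this lemma, combined with the definition $\SSS^{k,l,1}=\partial_1(G_{k,l,1})$.

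For the right column, the equivalence $\Map_{\NSp}(\Sigma^\infty M^k,\Sigma^\infty M^l)\simeq \partial_1(G_{k,l})$ is the central identification recalled at the end of Section \ref{ss: Glk}, which is a consequence of the main result of \cite{ABS1}. The inner two triangles commute essentially by construction: the horizontal map $\End\colon \Sigma^\infty_+\Map_{\mathbb{P}\Inj_\infty}(\CC^l,\CC^k)\to \Sigma^\infty \Map_{\NCW}(M^k,M^l)$ is defined to be $\Sigma^\infty_+$ applied to the mapping-space map of the functor $\widetilde{\End}$, while the downward map $\Sigma^\infty \Map_{\NCW}(M^k,M^l)\simeq \Sigma^\infty G_{k,l}(S^0)\to \partial_1(G_{k,l})$ is the canonical comparison from the suspension spectrum of the $0$-th level to the stabilization. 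The inclusion $G_{k,l,1}\hookrightarrow G_{k,l}$ compatibly produces the bottom parallelogram.

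The main obstacle, and the only substantive point, is the outermost triangle: identifying $\widetilde{\mathrm{E}}$ on mapping spectra with the composite along the left and bottom. Here one has to unpack the construction of the mate. By the definition of $\widetilde{\mathrm{E}}$ as the image of $\Sigma^\infty\circ\widetilde{\End}$ under the unit of the adjunction $(\Sigma^\infty_+)_!\dashv (\Omega^\infty)_!$, the underlying $\infty$-functor of $\widetilde{\mathrm{E}}$ fits into a commutative diagram with $\mathbb{P}\Inj_\infty^{\mathrm{op}}\to (\mathbb{P}\Inj_\infty^{\Sp})^{\mathrm{op}}$ and $\Sigma^\infty\circ\widetilde{\End}$; because $\NSp$ is $\Sp$-enriched and the unit on mapping spaces is precisely $\Map_{\mathbb{P}\Inj_\infty}(\CC^l,\CC^k)\to \Omega^\infty\Sigma^\infty_+\Map_{\mathbb{P}\Inj_\infty}(\CC^l,\CC^k)$, the adjunction formula forces the induced map on hom-spectra to be $\Sigma^\infty_+$ of the hom-map of $\widetilde{\End}$, post-composed with $\Sigma^\infty\Map_{\NCW}(M^k,M^l)\to \partial_1(G_{k,l})$. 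This is exactly the composite appearing in the diagram, and once this identification is made, the rightmost vertical equivalence (right side of the square) and the stated equivalence $\Sigma^\infty G_{k,l,1}(S^0)\simeq \partial_1(G_{k,l,1})$ provide the asserted equivalence of $\widetilde{\mathrm{E}}$ with $\partial_1(G_{k,l,1})\to \partial_1(G_{k,l})$. Care is needed with Hinich's formalism to justify working entirely on mapping spectra, but no further input is required.
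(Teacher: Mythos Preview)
Your decomposition and arguments are correct and match the paper's approach closely. The paper likewise treats the lower triangle and right trapezoid as ``clear,'' attributes the left square to Lemma~\ref{l:pinj}, and singles out the top trapezoid involving $\widetilde{\mathrm{E}}$ as the only piece requiring a separate argument.

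The one point of difference is in how that top trapezoid is handled. You unpack the adjunction $(\Sigma^\infty_+)_! \dashv (\Omega^\infty)_!$ directly via the unit and identify the induced map on hom-spectra. The paper instead observes that the mate $\widetilde{\mathrm{E}}$ factors in $\Cat^\otimes_{\Sp}$ as
\[
(\mathbb{P}\Inj_\infty^{\Sp})^{\mathrm{op}} \longrightarrow (\Sigma^\infty)_!(\NCW) \longrightarrow \NSp,
\]
i.e.\ it uses functoriality of $(\Sigma^\infty_+)_!$ applied to the factorization $\Sigma^\infty\circ\widetilde{\End}$. On mapping spectra this yields exactly the composite you describe, so the two arguments are equivalent; the paper's formulation is slightly slicker because it avoids explicitly invoking the unit and makes the naturality in $k,l$ automatic, while yours makes the identification of the hom-spectrum map more transparent.
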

\begin{proof}
The commutation of the lower triangle and the right trapezoid is clear. The left square is a consequence  of lemma \ref{l:pinj}. To see the commutation of the top trapezoid consider the digram in $\Cat_{\Sp}^{\otimes}$.
$$
\xymatrix{
 (\mathbb{P}\Inj_{\infty}^{\Sp})^\mathrm{op} \ar[rd]\ar[r]& (\Sigma^{\infty})_!( \NCW) \ar[d] \\
 & \NSp
 }.$$
\end{proof}

%

\section{Connection with topological $K$-theory}\label{section: k-theory}
In this section we show that for every fixed $l$, the spectrum $\SSS^{\infty, l}:=\colim_{k\to \infty} \SSS^{k,l}$, is equivalent to the connective $K$-theory spectrum $ku$. We will use this observation to show how the spectra $\SSS^{\infty, l}$ together represent $K$-theory of noncommutative CW-complexes.

Recall that for a pointed finite set $[t]$,
\[
G_{k,l}([t])=\Csep(M_l^t, M_k)\cong \bigvee_{(m_1, \ldots, m_t)} {\Inj(\CC^{(m_1+\cdots+m_t)l}, \CC^k)/_{\prod_{i=1}^t U(m_i)}}_+.
\]
The inclusions of algebras $M_k\to M_{k+1}$ that send a matrix $a$ to $a\oplus 0$ induce a sequence of natural transformations, for each fixed $l$, $$\cdots \to G_{k,l}\to G_{k+1,l}\to \cdots.$$
\begin{define}
For each fixed $l$, define the functor $G_{\infty, l}:=\colim_{k\to \infty}G_{k, l}$. Here by $\colim$ we mean strict rather than homotopy colimit.
\end{define}
\begin{lem}\label{lemma: Ginfty}
$G_{\infty, l}$ is equivalent to the homotopy colimit $\hocolim_{k\to \infty} G_{k,l}$. Moreover there is a homeomorphism (where as usual the wedge sum is indexed on the set of non-zero $t$-tuples of non-negative integers)
\[
G_{\infty,l}([t])= \bigvee_{(m_1, \ldots, m_t)} {\Inj(\CC^{(m_1+\cdots+m_t)l}, \CC^\infty)/_{\prod_{i=1}^t U(m_i)}}_+
\]
and a homotopy equivalence
\[
G_{\infty,l}([t])\simeq \bigvee_{(m_1, \ldots, m_t)} BU(m_1)\times \cdots\times BU(m_t)_+.
\]
\end{lem}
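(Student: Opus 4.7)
The plan is to derive all three assertions from the explicit formula in Proposition~\ref{prop:SC inj Mlt Mk} together with the Kan extension property from Theorem~\ref{t:polynom inverse}.

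First I would identify the transition map $G_{k,l}([t])\to G_{k+1,l}([t])$ under the wedge decomposition of Proposition~\ref{prop:SC inj Mlt Mk}. The inclusion $M_k\hookrightarrow M_{k+1}$, $a\mapsto a\oplus 0$, corresponds on the Hilbert-space side to the standard embedding $\CC^k\hookrightarrow\CC^{k+1}$. A quick trace through the proof of Proposition~\ref{prop:SC inj Mlt Mk} shows that post-composition with this inclusion preserves the tuple $(m_1,\ldots,m_t)$ recording the dimensions of the non-kernel isotypic components (only the dimension of the kernel component grows, by one), and on each wedge summand it is induced by the inclusion of injection spaces $\Inj(\CC^{(m_1+\cdots+m_t)l},\CC^k)\hookrightarrow \Inj(\CC^{(m_1+\cdots+m_t)l},\CC^{k+1})$. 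Each such inclusion is a closed cofibration of Stiefel manifolds and descends to a closed cofibration after passage to $\prod_{i=1}^t U(m_i)$-orbits. Since filtered colimits of closed cofibrations commute with wedges and with quotients by compact group actions, passing to the colimit over $k$ gives the stated homeomorphism, with $\Inj(\CC^{ml},\CC^\infty):=\colim_k \Inj(\CC^{ml},\CC^k)$.

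To establish the first assertion, I would invoke Theorem~\ref{t:polynom inverse}: each $G_{k,l}$ is the strict left Kan extension of its restriction to $\Fin$, and strict Kan extension (being a coend against $X^{(-)}$) commutes with filtered colimits in the $\Gamma$-space variable. Hence $G_{\infty,l}$ is the strict left Kan extension of the finite-set values computed above. Because the transition maps are level-wise closed cofibrations, Reedy cofibrancy of each $G_{k,l}$ (Section~\ref{section: cofibrant}) upgrades the inclusions $G_{k,l}\hookrightarrow G_{k+1,l}$ to cofibrations in the Reedy model structure on $\Gamma$-spaces. Sequential colimits along such cofibrations agree with the corresponding homotopy colimits, and this cofibrancy is inherited by the Kan extension to $\FCW$, identifying $G_{\infty,l}$ with $\hocolim_k G_{k,l}$ in $\SpM$.

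For the homotopy equivalence in the third assertion, I would use the homeomorphism $\Inj(\CC^{ml},\CC^k)\cong U(k)/U(k-ml)$ to conclude that this space is at least $(2(k-ml))$-connected, so its colimit $\Inj(\CC^{ml},\CC^\infty)$ is weakly contractible. The precomposition action of $U(ml)$ on $\Inj(\CC^{ml},\CC^\infty)$ is free, hence so is the restriction to the block-diagonal embedding $\prod_{i=1}^t U(m_i)\hookrightarrow U(ml)$ determined by the decomposition $\CC^{ml}\cong\bigoplus_i \CC^{m_i}\otimes \CC^l$. Therefore $\Inj(\CC^{ml},\CC^\infty)/_{\prod_i U(m_i)}$ is a model for $B\!\left(\prod_i U(m_i)\right)\simeq \prod_i BU(m_i)$. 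The main point to handle carefully in the whole argument is the cofibrancy bookkeeping required to identify strict with homotopy colimits; everything else is a direct consequence of Proposition~\ref{prop:SC inj Mlt Mk} and standard facts about free actions of compact Lie groups on contractible spaces.
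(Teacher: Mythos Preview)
Your proposal is correct and, for the homeomorphism and the homotopy-equivalence assertions, follows exactly the paper's line: the transition maps on each wedge summand are closed inclusions of Stiefel manifolds (the paper phrases this as ``inclusion of a submanifold, and in particular a cofibration''), the colimit is then the stated wedge over $\Inj(\CC^{ml},\CC^\infty)/_{\prod_i U(m_i)}$, and the identification with $\prod_i BU(m_i)$ comes from the free action of $\prod_i U(m_i)$ on the contractible space $\Inj(\CC^{ml},\CC^\infty)$ together with the quotient map being a fiber bundle.

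For the first assertion (colimit $=$ homotopy colimit) you take a somewhat longer route than the paper. The paper argues purely levelwise: since each transition map $G_{k,l}([t])\to G_{k+1,l}([t])$ is a cofibration, the strict sequential colimit agrees with the homotopy colimit. You instead pass through Theorem~\ref{t:polynom inverse} and the Reedy model structure on $\Gamma$-spaces to promote this to a statement in $\SpM$. Your argument is sound, and it does make explicit something the paper leaves implicit (namely why a levelwise check on $\Fin$ suffices for a functor-level conclusion), but for the purposes of this lemma and its later use --- computing the stabilization $\SSS^{\infty,l}$ --- the paper's direct levelwise argument is enough, and the Kan-extension/Reedy machinery is not needed here.
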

\begin{proof}
The map
\[
\Inj(\CC^{(m_1+\cdots+m_t)l}, \CC^k)/_{\prod_{i=1}^t U(m_i)} \to \Inj(\CC^{(m_1+\cdots+m_t)l}, \CC^{k+1})/_{\prod_{i=1}^t U(m_i)}
\]
is an inclusion of a submanifold, and in particular a cofibration. It follows that the colimit is equivalent to the homotopy colimit. As $k$ goes to $\infty$, the colimit of $G_{k,l}$ is, by definition, homeomorphic to
\[ \bigvee_{(m_1, \ldots, m_t)} {\Inj(\CC^{(m_1+\cdots+m_t)l}, \CC^\infty)/_{\prod_{i=1}^t U(m_i)}}_+.\]
Note that $\Inj(\CC^{(m_1+\cdots+m_t)l}, \CC^\infty)$ is a contractible space with a free action of $\prod_{i=1}^t U(m_i)$, and moreover the quotient map $$\Inj(\CC^{(m_1+\cdots+m_t)l}, \CC^\infty)\to \Inj(\CC^{(m_1+\cdots+m_t)l}, \CC^\infty)/_{\prod_{i=1}^t U(m_i)}$$ is a fiber bundle. It follows that the quotient space is equivalent to the classifying space $BU(m_1)\times \cdots\times BU(m_t)$.
\end{proof}
Recall $\SSS^{k,l}$ is the stabilization of $G_{k,l}$. We define $\SSS^{\infty, l}$ accordingly.
\begin{define}
The spectrum $\SSS^{\infty, l}$ is defined as follows
\[
\SSS^{\infty, l}:=\colim_{k\to \infty} \SSS^{k,l}.
\]
\end{define}
Since stabilization commutes with homotopy colimits of pointed functors, $\SSS^{\infty, l}$ is the stabilization of $G_{\infty, l}$.
It is worth noting that the homotopy type of $\SSS^{\infty,l}$ is independent of $l$. Indeed, the following statement is an immediate consequence of Lemma~\ref{lemma: Ginfty}.
\begin{cor}\label{cor: same}
A choice of an inclusion $\CC\hookrightarrow \CC^l$ induces an equivalence of $\Gamma$-spaces $G_{\infty, l}\to G_{\infty ,1}$.
\end{cor}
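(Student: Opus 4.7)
The plan is to construct the map explicitly on the algebra side, identify it on wedge summands via Proposition~\ref{prop:SC inj Mlt Mk}, and check that it is a level-wise weak equivalence using the contractibility statements from Lemma~\ref{lemma: Ginfty}.

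First I would construct the map. A Hilbert space embedding $i\colon \CC\hookrightarrow \CC^l$ gives, via the functor $\End$ from Lemma~\ref{l:pinj}, a non-unital $*$-homomorphism $M_1=\CC \hookrightarrow M_l$ sending $\lambda$ to the rank-one operator $\lambda\cdot p_{i(\CC)}$. Taking $t$-fold products yields $M_1^t\hookrightarrow M_l^t$, and pre-composition defines a map
\[
G_{k,l}([t])=\Csep(M_l^t,M_k)\longrightarrow \Csep(M_1^t,M_k)=G_{k,1}([t]),
\]
natural in $[t]\in\Fin$ (the construction depends only on the single algebra embedding $M_1\hookrightarrow M_l$, and products in $\Fin$ are functorial) and compatible with the stabilization $G_{k,l}\to G_{k+1,l}$. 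Passing to the colimit in $k$ yields the desired natural transformation $G_{\infty,l}\to G_{\infty,1}$.

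Next I would identify this map under the wedge decomposition of Proposition~\ref{prop:SC inj Mlt Mk}, extended to the colimit by Lemma~\ref{lemma: Ginfty}. On the summand indexed by $(m_1,\ldots,m_t)$ with $m=\sum m_i$, unwinding the bijection of Proposition~\ref{prop:SC inj Mlt Mk} (which associates to an embedding $\CC^{ml}\hookrightarrow \CC^\infty$ the evident algebra map) shows that pre-composition with $M_1^t\hookrightarrow M_l^t$ corresponds to restricting an embedding along the isometric inclusion $\CC^m\cong \CC^m\otimes i(\CC)\hookrightarrow \CC^m\otimes \CC^l\cong \CC^{ml}$. This restriction is $\prod_i U(m_i)$-equivariant and preserves the wedge-summand indexing, so on each summand the map has the form
\[
\Inj(\CC^{ml},\CC^\infty)/{\textstyle\prod_i U(m_i)} \longrightarrow \Inj(\CC^{m},\CC^\infty)/{\textstyle\prod_i U(m_i)}.
\]

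Finally I would show each such summand map is a weak equivalence. By Lemma~\ref{lemma: Ginfty}, both $\Inj(\CC^{ml},\CC^\infty)$ and $\Inj(\CC^{m},\CC^\infty)$ are contractible with free $\prod_i U(m_i)$-action, and the quotient maps are principal bundles classifying $BU(m_1)\times\cdots\times BU(m_t)$. The restriction map above is an equivariant map between these two universal bundles, hence descends to a weak equivalence of classifying spaces. Summing over summands gives an equivalence $G_{\infty,l}([t])\to G_{\infty,1}([t])$ for every $[t]$, whence an equivalence of $\Gamma$-spaces. There is no serious obstacle; the only point requiring care is the combinatorial verification in the second step that pre-composition with $M_1^t\hookrightarrow M_l^t$ corresponds to the stated restriction of isometric embeddings under Proposition~\ref{prop:SC inj Mlt Mk}.
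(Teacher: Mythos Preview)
Your argument is correct and is exactly the detailed unpacking of what the paper leaves as ``an immediate consequence of Lemma~\ref{lemma: Ginfty}'': you construct the map induced by $\CC\hookrightarrow\CC^l$, identify it summand-by-summand via Proposition~\ref{prop:SC inj Mlt Mk}, and conclude using the classifying space identification of Lemma~\ref{lemma: Ginfty}. The paper omits all of these steps, but your proof is precisely how one would fill them in.
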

In fact, the spectrum $\SSS^{\infty, l}$ is homotopy equivalent to the connective $K$-theory spectrum $ku$ for each $l$. We record this observation in a lemma.
\begin{lem}\label{lemma: allku}
For each $l$ the spectrum $\SSS^{\infty, l}$ is homotopy equivalent to $ku$.
\end{lem}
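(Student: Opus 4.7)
My plan is to first reduce to the case $l=1$ using Corollary \ref{cor: same}, then identify $G_{\infty,1}$ with the classical Segal $\Gamma$-space associated to the permutative groupoid of finite-dimensional complex Hermitian inner product spaces under orthogonal direct sum, and invoke Segal's theorem to conclude that the associated spectrum is $ku$.

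The first step is to verify that $G_{\infty,1}$ is a \emph{special} $\Gamma$-space in the sense of Segal, i.e., that for each $t$ the Segal map
\[
G_{\infty,1}([t]) \longrightarrow G_{\infty,1}([1])^{\times t}
\]
induced by the projections $\rho_i\colon[t]\to[1]$ (sending $i\mapsto 1$ and all other elements to the basepoint) is a weak equivalence. Using the formula from Lemma \ref{lemma: Ginfty} together with the functoriality described in Proposition \ref{prop:SC inj Mlt Mk}, one has $(\rho_i)_*(m_1,\ldots,m_t) = (m_i)$ in the notation of Definition \ref{def: multisets}, and the induced map carries the wedge summand $BU(m_1)\times\cdots\times BU(m_t)$ of $G_{\infty,1}([t])$ into the summand $BU(m_i)$ of $G_{\infty,1}([1])\simeq \coprod_{m\geq 0} BU(m)$ by projection onto the $i$-th factor. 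Assembling over $i=1,\ldots,t$, the Segal map restricts on each wedge summand to the identity of $BU(m_1)\times\cdots\times BU(m_t)$ and is a continuous bijection overall; as both sides are disjoint unions of CW-complexes, this is a homeomorphism.

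Thus $G_{\infty,1}$ is a special $\Gamma$-space whose value at $[1]$ is $\coprod_{m\geq 0} BU(m)$, which is the classifying space of the permutative groupoid of finite-dimensional complex Hermitian inner product spaces under orthogonal direct sum. By Segal's classical theorem, the spectrum associated to this $\Gamma$-space is the connective complex $K$-theory spectrum $ku$. To finish, I would invoke Theorem \ref{t:polynom inverse}, applied in the limit $k\to\infty$, to identify $G_{\infty,1}(X)$ with the derived left Kan extension of its restriction to $\Fin$; this ensures that the stabilization $\SSS^{\infty,1}$ computed from the sequence $\{G_{\infty,1}(S^n)\}$ agrees with the spectrum produced from the $\Gamma$-space $G_{\infty,1}|_{\Fin}$ by Segal's machine. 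The main task is essentially bookkeeping: checking the Segal condition by carefully unwinding the functoriality recorded in Proposition \ref{prop:SC inj Mlt Mk}.
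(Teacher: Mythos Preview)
Your proof is correct and follows essentially the same route as the paper: reduce to $l=1$ via Corollary~\ref{cor: same}, verify that $G_{\infty,1}$ is a special $\Gamma$-space with $G_{\infty,1}([1])\simeq\coprod_{m\ge 0}BU(m)$, and invoke Segal's identification of the associated spectrum with $ku$. You supply a bit more detail than the paper does (an explicit check of the Segal condition and the appeal to Theorem~\ref{t:polynom inverse} to match the two notions of stabilization), but the argument is the same.
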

\begin{proof}
The $\Gamma$-space $G_{\infty, 1}$ is equivalent to the $\Gamma$-space constructed by Segal in~\cite[Section 2]{Se1}. It is, in Segal's terminology, a {\it special} $\Gamma$-space. This means that for any two pointed finite sets $[s], [t]$, the natural map $G_{\infty, 1}([s]\vee [t])\to G_{\infty, 1}([s])\times G_{\infty, 1}([t])$ is an equivalence. It follows from Lemma~\ref{lemma: Ginfty} that
\[
G_{\infty, 1}([1])\cong \bigvee_{m=1}^\infty BU(m)_+.
\]
So $\SSS^{\infty, 1}$ is the spectrum associated with the group-completion of $\bigvee_{m=1}^\infty BU(m)_+$, which is well-known to be $ku$. By corollary~\ref{cor: same}, it follows that $\SSS^{\infty, l}\simeq ku$ for all $l$.
\end{proof}
It follows that for each $k, l$ there is a natural map $\SSS^{k,l}\to \SSS^{\infty, l}\simeq ku$. We will show later, after we analyze the subquotients of the rank filtration, that this map induces an isomorphism on $\pi_0$ (Lemma~\ref{lem: over ku}). In particular, it is not trivial.

\subsubsection*{$K$-theory of noncommutative CW-complexes.}\label{subsection: K-theory}
We will now discuss how the spectra $\SSS^{\infty, l}$ can be used to represent the $K$ theory functor on the category~$\NSp$. In this subsection we consider $\cM$ as an $\infty$-category and use the $\infty$-categorical picture.

Consider the enriched Yoneda embedding of $\cM^{\op}$ (which is an $\Sp^{rev}$-functor):
\[
Y\colon\cM^{\op}\to \Fun_\Sp(\cM,\Sp).
\]
The sequence of algebras $\cdots\to M_k\to M_{k+1}\to \cdots$ gives rise to a direct sequence in $\cM^{\op}$: \[\cdots\to  \Sigma^\infty_{\NC} M_k\to  \Sigma^\infty_{\NC} M_{k+1}\to \cdots\]
Applying the Yoneda embedding $Y$ to this sequence, we obtain a sequence of spectral functors $Y(\Sigma^\infty_{\NC} M_k)\in \Fun_\Sp(\cM,\Sp)$, that are characterized by the following property
\[
Y(\Sigma^\infty_{\NC} M_k)(\Sigma^\infty_{\NC} M_l)\simeq \Hom_{\cM}(\Sigma^\infty_{\NC} M_k, \Sigma^\infty_{\NC} M_l) = \SSS^{k,l}.
\]
The homotopy colimit of this sequence is a spectral functor that will represent connective $K$-theory. Let us give this functor a name:
\begin{define}
We define the spectrally enriched functor $\ku\in\Fun_\Sp(\cM,\Sp)$ to be the following colimit:
$$\ku=\colim_{k\to\infty} Y(\Sigma^\infty_{\NC} M_k).$$
\end{define}
Note that for every $l$ we have the following equivalences, the last of which follows from Lemma~\ref{lemma: allku}. The last equivalence justifies the notation $\ku$ for this functor.
$$\ku(\Sigma^\infty_{\NC} M_l):=\colim_{k\to\infty}\Hom_{\NSp}(\Sigma^\infty_{\NC} M_k, \Sigma^\infty_{\NC} M_l)=\SSS^{\infty, l}\simeq ku.$$

We know by the main result of~\cite{ABS1} that we can identify $\NSp$ with $P_{\Sp}(\cM)$, the category of contravariant spectrally enriched functors from $\cM$ to $\Sp$. This implies that {\it covariant} enriched functors from $\cM$ to $\Sp$, such as $\ku$, can be used to define homology theories on $\NSp$, and therefore also on $\NCW$.

Indeed suppose $h\colon\cM\to \Sp$ is a covariant $\Sp$-functor. Then $h$ determines an $\Sp$-tensored functor
$$h\wedge_{\cM} (-):\NSp\to\Sp$$
by the universal property of the enriched Yoneda embedding, such that there is a natural equivalence
\[
h(k)\wedge_{k\in \cM} \SSS^{k,l} \simeq h(\Sigma^\infty_{\NC} M_l).
\]
The universal property also tells us that this induces an equivalence between the $\infty$-category of $\Sp$-tensored functors $\NSp\to\Sp$ and the $\infty$-category of $\Sp$-functors $\cM\to \Sp$. By taking homotopy groups $\pi_*(h\wedge_{\cM} (-))$ one obtains a generalized homology theory on noncommutative CW-spectra (see \cite[Definition 4.1]{BJM}). 

In particular, let us take $h=\ku$. We interpret the property that $\ku(\Sigma^\infty_{\NC} M_l)\simeq ku$ for all $l$ as saying that $\ku$ represents a version of connective $K$-theory. We have a further enhancement of this fact.
\begin{lem}\label{lem: bu}
The functor $\ku$ takes values in the category of $ku$-module spectra.
\end{lem}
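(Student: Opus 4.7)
My plan is to exhibit the $ku$-module structure on each $\ku(A)$ by identifying $\ku(A)$ with a mapping spectrum into a pro-object of $\cM$ whose endomorphism ring spectrum is $ku$, and to derive the module structure from post-composition.

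Let $M_\infty$ denote the ind-object in $\cM^{\op}$ associated to the direct sequence $\Sigma^\infty_{\NC} M_1 \to \Sigma^\infty_{\NC} M_2 \to \cdots$. By the definition of $\ku$ and of mapping spectra in an ind-completion, for each $A \in \cM$ there is a natural equivalence
\[
\ku(A) = \colim_k \Hom_\cM(\Sigma^\infty_{\NC}M_k, A) \simeq \Hom_{\Ind(\cM^{\op})}(A, M_\infty).
\]
The right-hand side carries a natural action of the endomorphism ring spectrum $R := \End_{\Ind(\cM^{\op})}(M_\infty)$ by post-composition, and this action is visibly functorial in $A$.

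I would then compute
\[
R \simeq \lim_j \Hom_{\Ind(\cM^{\op})}(\Sigma^\infty_{\NC}M_j, M_\infty) \simeq \lim_j \ku(\Sigma^\infty_{\NC}M_j) \simeq \lim_j \SSS^{\infty, j}.
\]
By Lemma~\ref{lemma: allku} each $\SSS^{\infty, j} \simeq ku$. Using the explicit $\Gamma$-space description of Lemma~\ref{lemma: Ginfty}, the transition maps $\SSS^{\infty, j+1} \to \SSS^{\infty, j}$ induced by the inclusions $M_j \to M_{j+1}$ of $C^*$-algebras unwind, at each rank-$m$ wedge summand, to the natural maps $\Inj(\CC^{m(j+1)}, \CC^\infty)/U(m) \to \Inj(\CC^{mj}, \CC^\infty)/U(m)$ between two standard models of $BU(m)$; these are equivalences, so $R \simeq ku$ as spectra.

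The main obstacle is upgrading this to an equivalence of ring spectra. The multiplication on $R$ comes from composition in $\cM$, while the standard ring structure on $ku$ arises from Segal's machine applied to the multiplicative $\Gamma$-ring structure on $G_{\infty, 1}$ coming from $M_k \otimes M_l \cong M_{kl}$. Once the two ring structures are identified, the post-composition action automatically makes $\ku(A)$ into a $ku$-module functorially in $A$, completing the proof. For the ring comparison, the key point, via Proposition~\ref{prop:SC inj Mlt Mk}, is that composition of $\ast$-homomorphisms corresponds under the rank decomposition to the tensor product of isometric embeddings $\CC^{ma} \otimes \CC^{nb} \hookrightarrow \CC^{(mn)(ab)}$, which is precisely the multiplicative structure recovering the standard $ku$.
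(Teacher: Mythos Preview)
Your approach is genuinely different from the paper's, and while the overall strategy is reasonable, the part you flag as ``the main obstacle'' is indeed a real gap that your sketch does not close.

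The paper's proof does not use composition in $\cM$ at all. Instead it uses the \emph{symmetric monoidal} structure on $\cM$, coming from the tensor product of matrix algebras $M_{k'}\otimes M_k\cong M_{kk'}$. This gives pairings $\SSS^{k',l'}\wedge\SSS^{k,l}\to\SSS^{kk',ll'}$; specializing to $l'=1$ and taking the colimit over $k,k'$ produces the action map $\ku(\Sigma^\infty_{\NC}M_1)\wedge\ku(\Sigma^\infty_{\NC}M_l)\to\ku(\Sigma^\infty_{\NC}M_l)$ directly, with $\ku(\Sigma^\infty_{\NC}M_1)\simeq ku$ by Lemma~\ref{lemma: allku}. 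No ring comparison is needed because the ring structure on $ku$ used here is, by construction, the one coming from tensor product.

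Your route via $R=\End_{\Ind(\cM^{\op})}(M_\infty)$ is abstractly clean and does furnish a functorial $R$-module structure on each $\ku(A)$. Your computation that $R\simeq ku$ as a \emph{spectrum} is fine. But the ring structure on $R$ comes from \emph{composition} in $\cM$, not from the monoidal structure, and your last paragraph blurs the two. The composition pairing described in Theorem~\ref{theorem: main} sends $(g,f)$ to $f\circ g^{\oplus n}$, which is a ``direct-sum-then-compose'' operation; the standard ring structure on $ku$ is tensor product of bundles. These happen to agree on $\pi_0$ (both multiply ranks, as in Proposition~\ref{prop: rank}), and one can argue they agree in the limit, but this is exactly the nontrivial comparison you would need to carry out and have not. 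The paper's monoidal-structure argument bypasses this issue entirely, which is what makes it shorter.
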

\begin{proof}
The monoidal structure on $\cM$ gives rise to maps $\SSS^{k',l'}\wedge \SSS^{k,l} \to \SSS^{kk', ll'}$.  Fixing $l'=1$ we obtain maps $\SSS^{k',1}\wedge \SSS^{k,l} \to \SSS^{kk', l}$, natural in $l$. Taking limits as $k, k'\to \infty$ we obtain maps, still natural in $\Sigma^\infty_{\NC} M_l$
\[
\ku(\Sigma^\infty_{\NC} M_1)\wedge \ku(\Sigma^\infty_{\NC} M_l)\to \ku(\Sigma^\infty_{\NC} M_l).
\]
Identifying $\ku(\Sigma^\infty_{\NC} M_1)$ with $ku$, this map endows $\ku(\Sigma^\infty_{\NC} M_l)$ with the structure of a $ku$-module, functorial in $\Sigma^\infty_{\NC} M_l$.
\end{proof}

The $\infty$-category of $ku$-module spectra $ku\mathrm{-mod}$ is stable and thus $\Sp$-tensored. In light of Lemma \ref{lem: bu}, we may now view $\ku$ as an $\Sp$-functor $\ku\colon \cM\to ku\mathrm{-mod}$. Thus, by the universal property of the enriched Yoneda embedding, $\ku$ determines an $\Sp$-tensored functor
$$\ku\wedge_{\cM} (-):\NSp\to ku\mathrm{-mod}.$$
This allows for the following definition:
\begin{define}
For $X, Y\in \NSp$ we define
\[kk(X, Y):=[\ku \wedge_{\cM} X, \ku \wedge_{\cM} Y]_{ku\mathrm{-mod}},
\]
\[kk_*(X, Y)=\pi_*\Map_{ku\mathrm{-mod}}(\ku \wedge_{\cM} X, \ku \wedge_{\cM} Y)=kk(\Sigma^*X,Y).
\]
\end{define}

It follows from Lemma \ref{lem: bu} that for all $k,l\geq 1$
\[
kk_*(\Sigma^\infty_{\NC} M_k, \Sigma^\infty_{\NC} M_{l})\simeq \pi_*\Map_{ku\mathrm{-mod}}(ku, ku)\simeq \pi_*ku.
\]
Recall from Remark \ref{remark: commutative} that the category of commutative spectra is identified with the category of spectral presheaves on the full $\Sp$-enriched subcategory of $\cM$ containing the object $\Sigma^\infty_{\NC} M_1$. A commutative spectrum is made noncommutative by means of an $\Sp$-enriched left Kan extension. Suppose that $X$ is a commutative spectrum, which we also may consider as a noncommutative spectrum by Kan extension. Standard adjunction implies that there is a natural equivalence
\[
\ku \wedge_{\cM} X \simeq ku \wedge X,
\]
where the symbol $\wedge$ on the right hand side denotes the usual smash product of spectra. It follows that if $X$ and $Y$ are commutative spectra then there is an equivalence
\[
kk_*(X, Y)\simeq \pi_* \Map_{\Sp}(X, ku \wedge Y).
\]
In the case when $X$ and $Y$ are suspension spectra of pointed finite CW-complexes, this is essentially~\cite[Proposition 3.1]{DM}. Thus $kk$ defined above is a natural extension for the connective bivariant $K$-theory of Dadarlat and McClure. This also suggests that some results from [loc. cit.] may be generalized from finite CW-complexes to finite noncommutative CW-complexes. This will be addressed in future papers.

%
%

We conclude by remarking that since the functor $\ku$ takes values in modules over $ku$, one can invert the Bott element and get a functor representing non-connective $K$-theory.

\subsubsection{Connection to the rank filtration of $ku$}
It follows immediately from equation~\eqref{eq: rank filtration}, that the filtration of $G_{k,l}$ by $G_{k,l,m}$ interacts well with the stabilization map $G_{k,l}\to G_{k+1, l}$ that we considered in the previous section. To be more specific, there is a commutative diagram
\[
\begin{array}{ccccccccc}
\cdots & \to & G_{k, l, m} & \to & G_{k, l, m+1} & \to & \cdots & \to & G_{k,l} \\
 &  & \downarrow & & \downarrow & & \cdots & & \downarrow\\
\cdots & \to & G_{k+1, l, m} & \to & G_{k+1, l, m+1} & \to & \cdots & \to & G_{k+1,l} \\
 &  & \downarrow & & \downarrow & & \cdots & & \downarrow\\
 &  & \vdots & & \vdots & & \cdots & & \vdots \\
\cdots & \to & G_{\infty, l, m} & \to & G_{\infty, l, m+1} & \to & \cdots & \to & G_{\infty,l}.
\end{array}
\]
On finite sets, $G_{\infty, l, m}$ is given, at least up to homotopy, by the following formula
\[
G_{\infty, l, m}([t])\simeq  \bigvee_{\underset{ m_1+\cdots + m_t\le m\}}{\{(m_1, \ldots, m_t)\mid}} BU(m_1)\times \cdots \times BU(m_t)_+
\]
We conclude that the rank filtration of $G_{k,l}$ induces a compatible rank filtration of $G_{\infty, l}$. Upon passing to stabilization, the rank filtration of $G_{\infty, l}$ induces a filtration of $ku$ by a sequence of spectra:
\[
\cdots  \to  \SSS^{\infty, l, m}  \to  \SSS^{\infty, l, m+1}  \to  \cdots  \to  \SSS^{\infty,l}\simeq ku.
\]
The latter filtration is the familiar rank filtration of the $K$-theory spectrum $ku$ studied, for example, in~\cite{AL-Fundamenta} (we remark that $ku$ is denoted $bu$ in [loc. cit]). Thus the rank filtration of $\SSS^{k,l}$ is a lift of the classical rank filtration of $ku$.

\section{Subquotients of the rank filtration}\label{ss: subquotients}
In this section we investigate the subquotients of the rank filtration. We show that on the level of the functors $G_{k,l}$ the subquotients of the rank filtration have a presentation as a homotopy coend over the category $\Epi$ of finite sets and surjections, as opposed to the category $\Fin$ of pointed sets and all pointed functions.

\begin{define}
Let $G_{k,l}^{m}\colon \CW\to \Top$ be the quotient functor $$G_{k,l}^{m}:=G_{k,l,m}/G_{k,l,m-1}.$$
Similarly, let $\SSS^{k,l}_m$ be the homotopy cofiber of the map $\SSS^{k,l,m-1} \to \SSS^{k,l,m}$.
\end{define}
It is not hard to check that for any $X\in \CW$ the map $G_{k,l,m-1}(X)\to G_{k,l,m}(X)$ is a cofibration in $\Top$. Thus, $G_{k,l}^{m}$ the levelwise homotopy cofiber of the map $G_{k,l,m-1}\to G_{k,l,m}$ in the category of pointed continuous functors $\Fun_*(\CW,\Top)$. It follows from \cite[Lemma 2.16]{ADL} that $G_{k,l}^{m}$ is also the homotopy cofiber of the map $G^{k,l,m-1} \to G^{k,l,m}$ in the projective model structure on $\Fun_*(\CW,\Top)$, and thus also in the stable model structure $\SpM$.

Since strict (homotopy) left Kan extension commutes with strict (homotopy) cofiber, we get from Theorem \ref{t:polynom inverse} that the functor $G_{k,l}^{m}$ is equivalent to both the strict and the derived left Kan extension of its restriction to ${\Fin^{\le j}}$ for any $\infty\ge j\ge \min(m, \lfloor\frac{k}{l}\rfloor)$. Thus, for any finite pointed CW-complex $X$, we have the folowing formula:
\begin{equation}\label{eq: G_kl^m}
G_{k,l}^{m}(X)\cong\int^{[t]\in \Fin^{\le j}}_{\mathrm s}X^t\wedge G_{k,l}^{m}([t])\simeq\int^{[t]\in \Fin^{\le j}}_{\mathrm h}X^t\wedge G_{k,l}^{m}([t]).
\end{equation}
Since stabilization commutes with homotopy cofibers, $\SSS^{k,l}_m$ is equivalent to the stabilization of $G_{k,l}^{m}$.


It follows immediately from equation~\eqref{eq: rank filtration} that on objects $G_{k,l}^{m}$ is given as follows
\begin{equation}\label{eq: subquotient}
G_{k,l}^{m}([t])= \bigvee_{\underset{ m_1+\cdots + m_t= m\}}{\{(m_1, \ldots, m_t)\mid}} {\Inj(\CC^{(m_1+\cdots+m_t)l}, \CC^k)/_{\prod_{i=1}^t U(m_i)}}_+
\end{equation}
Which also can be written as
\[
G_{k,l}^{m}([t]) = \bigvee_{\underset{ m_1+\cdots + m_t= m\}}{\{(m_1, \ldots, m_t)\mid}} {\Inj(\CC^{ml}, \CC^k)/_{\prod_{i=1}^t U(m_i)}}_+
\]
To understand the effect of $G_{k,l}^{m}$ on morphisms, let $\alpha\colon [t]\to [s]$ be a pointed function. If $|\alpha_*(m_1, \ldots, m_t)|< m$, then $G_{k,l}^{m}(\alpha)$ takes the summand of $G_{k,l}^{m}([t])$ corresponding to $(m_1, \ldots, m_t)$ to the basepoint. If $|\alpha_*(m_1, \ldots, m_t)| = m$, then $G_{k,l}^{m}(\alpha)$ takes the corresponding summand of $G_{k,l}^{m}([t])$ to the summand of $G_{k,l}^{m}([s])$ indexed by $\alpha_*(m_1, \ldots, m_t)$ by the map~\eqref{eq: functor}.

One attractive property of $G_{k,l}^{m}$ is that it has a more compact coend formula than the one given in equation \ref{eq: G_kl^m} (see Proposition \ref{prop: little coend}). In this formula the category $\Fin$ is replaced with the smaller category $\Epi$ of non-empty unpointed sets and surjections. For $k\le \infty$ let $\Epi^{\le k}$ be the category of non-empty finite sets of cardinality at most $k$ and epimorphisms between them. In the case $k=\infty$, this is the category of all non-empty finite sets and surjections, and we denote it simply $\Epi$.

If $\alpha\colon \underline{t}\twoheadrightarrow \underline{s}$ is a morphism in $\Epi$, and $(m_1, \ldots, m_t)$ is a multiset, then we may define $\alpha_*(m_1, \ldots, m_t)= (n_1, \ldots, n_s)$ in the usual way, by saying that $n_j=\sum_{i\in\alpha^{-1}(j)}m_i$. Note that in this case there is an equality $m_1+\cdots+m_t=n_1+\cdots+n_s$. Note also that if $m_i>0$ for all $i$ then $n_j>0$ for all $j$.
\begin{define}\label{definition: Gred}
Let $\uTop$ be the category of unpointed topological spaces. Let $\Gred_{k,l}^m\colon \Epi \to \uTop$ be the following functor. On objects, it is defined by the following formula
\[
\Gred_{k,l}^{m}(\underline{t}) = \coprod_{\underset{ m_i>0, m_1+\cdots + m_t= m\}}{\{(m_1, \ldots, m_t)\mid}} {\Inj(\CC^{ml}, \CC^k)/_{\prod_{i=1}^t U(m_i)}}
\]
On morphisms, $\Gred_{k,l}^m$ is defined similarly to $G_{k,l}$ and $G_{k,l}^m$. Given a surjection $\alpha\colon \underline{t}\twoheadrightarrow \underline{s}$, the summand indexed by $(m_1, \ldots, m_t)$ is mapped to the summand indexed by $\alpha_*(m_1, \ldots, m_t)$ by the same map as in~\eqref{eq: functor}.
\end{define}
We will make much use of the functor ${\Gred_{k,l}^m}_+\colon \Epi \to \Top$, which is obtained by adding a disjoint basepoint to $\Gred_{k,l}^m$. We introduced an unpointed version of the functor because it will be convenient to have it at some point.
\begin{rem}\label{remark: decomposition}
For a multi-set $(m_1, \ldots, m_t)$ define its support to be the set $A=\{i\mid m_i>0\}\subseteq \underline{t}$. Notice that for any subset $A\subseteq \underline{t}$ there is a natural way to identify ${\Gred_{k,l}^{m}(A)}_+$ with a wedge summand of $G_{k,l}^m([t])$. Namely, ${\Gred_{k,l}^{m}(A)}_+$ is identified, on the right hand side of~\eqref{eq: subquotient} as the wedge sum corresponding to indices $(m_1,\ldots, m_t)$ whose support is exactly $A$. With this identification, there is a homeomorphism
\[
G_{k,l}^m([t])\cong \bigvee_{A\subseteq\underline{t}}  {\Gred_{k,l}^{m}(A)}_+
\]
Moreover, the functoriality in $[t]$ is defined on the right hand side as follows. Suppose $\alpha\colon[t]\to [s]$ is a pointed function. Suppose $A\subseteq \underline{t}$. If $\alpha$ sends some element of $A$ to the basepoint of $[s]$, then the corresponding summand $\Gred_{k,l}^{m}(A)$ is sent to the basepoint. Otherwise, this summand is sent to $\Gred_{k,l}^{m}(\alpha(A))$ using the surjection $A\twoheadrightarrow \alpha(A)$ defined by $\alpha$.
\end{rem}

Consider the functor
\[\begin{array}{ccc}
 \Fin \times \Epi^{\op} &\to& \Top \\
 ([t], u) & \to & [t]^{\wedge u}
 \end{array}
\]
One can think of this functor as a $\Fin-\Epi$-bimodule. It is often used to establish connections between categories of $\Fin$-modules and $\Epi$-modules. We have the following proposition:
\begin{prop}\label{prop: reduction}
There is a homeomorphism and an equivalence, natural in $[t]$ ranging over $\Fin$:
\[
G_{k,l}^m([t])\cong \int^{u\in \Epi}_{\mathrm s} [t]^{\wedge u} \wedge {\Gred_{k,l}^m(u)}_+ \simeq \int^{u\in \Epi}_{\mathrm h} [t]^{\wedge u} \wedge {\Gred_{k,l}^m(u)}_+
\]
\end{prop}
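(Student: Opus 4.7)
The plan is to reduce the identification of $G_{k,l}^m([t])$ with the strict coend to a co-Yoneda calculation, and then upgrade to the homotopy coend via Reedy cofibrancy. The key combinatorial observation is the decomposition
\[
[t]^{\wedge u} \;\cong\; \bigvee_{\emptyset \neq A \subseteq \underline{t}} \Epi(u, A)_+
\]
of pointed spaces (in fact pointed sets), where $\Epi(u,A)$ denotes the set of surjections $u \twoheadrightarrow A$. This records the unique factorization of a non-basepoint element of $[t]^{\wedge u}$, viewed as a function $f\colon u \to \underline{t}$, as $u \twoheadrightarrow \mathrm{Im}(f) \hookrightarrow \underline{t}$. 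The contravariant functoriality in $u \in \Epi^{\op}$ on the right corresponds to precomposition of surjections.

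Substituting this decomposition into the strict coend and commuting the wedge past the coend yields
\[
\int^{u\in \Epi}_{\mathrm s} [t]^{\wedge u} \wedge \Gred_{k,l}^m(u)_+ \;\cong\; \bigvee_{\emptyset \neq A \subseteq \underline{t}} \int^{u\in \Epi}_{\mathrm s} \Epi(u, A)_+ \wedge \Gred_{k,l}^m(u)_+.
\]
Since each $A$ is itself an object of $\Epi$ and $\Gred_{k,l}^m$ is a covariant functor on $\Epi$, the co-Yoneda lemma (applied levelwise and compatible with the topologies, which are discrete on the $\Epi(u,A)$-factor) identifies each summand on the right with $\Gred_{k,l}^m(A)_+$. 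By Remark~\ref{remark: decomposition}, the resulting wedge sum is homeomorphic to $G_{k,l}^m([t])$. I would then verify that this homeomorphism agrees with the natural assembly-style map sending $[\alpha, g] \in [t]^{\wedge u} \wedge \Gred_{k,l}^m(u)_+$, with $\alpha = \iota_A \circ \alpha'$ for $\alpha'\colon u \twoheadrightarrow A$, to the image of $\Gred_{k,l}^m(\alpha')(g)$ in the $A$-summand of $G_{k,l}^m([t])$. Naturality in $[t] \in \Fin$ is then a straightforward diagram chase, using the functoriality description given after equation~\eqref{eq: subquotient}.

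To pass from the strict to the homotopy coend, I would verify that both the covariant functor $\Gred_{k,l}^m\colon \Epi \to \Top$ and the contravariant functor $u \mapsto [t]^{\wedge u}$ on $\Epi$ are cofibrant in the generalized Reedy model structure of~\cite{BM}, viewing $\Epi$ as a generalized Reedy category under the cardinality degree function (every non-isomorphism strictly lowers degree). For the first functor, the latching object at $u$ consists of components indexed by multisets pulled back from proper quotients of $u$, and the complement is a disjoint union of free $\Aut(u)$-orbits on spaces of the form $\Inj(\CC^{ml},\CC^k)/\prod U(m_i)$, exactly as in Section~\ref{section: cofibrant}. For $u \mapsto [t]^{\wedge u}$, the decomposition above shows that the latching map is an inclusion of a union of free $\Aut(u)$-orbits into $[t]^{\wedge u}$. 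With both functors Reedy cofibrant, a variant of Lemma~\ref{lemma: coends} gives the equivalence between the strict and homotopy coends. The main obstacle will be bookkeeping the equivariant cofibrancy for the latching maps, but the structure is modeled on the argument already used in Section~\ref{section: cofibrant}.
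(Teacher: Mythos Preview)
Your argument for the strict coend is exactly the paper's: decompose $[t]^{\wedge u} \cong \bigvee_{A \subseteq \underline{t}} \Epi(u,A)_+$, apply co-Yoneda term by term, and identify the result with $G_{k,l}^m([t])$ via Remark~\ref{remark: decomposition}.

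For the passage to the homotopy coend the paper takes a shorter route than your Reedy argument. The same decomposition already exhibits $u \mapsto [t]^{\wedge u}$ as a wedge of representable functors on $\Epi^{\op}$, hence \emph{projectively} cofibrant. Since each $\Gred_{k,l}^m(u)_+$ is objectwise cofibrant (a disjoint union of manifolds with a disjoint basepoint), the $Q_pF \wedge Q_oG$ case of Lemma~\ref{lemma: coends} immediately gives the equivalence of strict and homotopy coends; no Reedy analysis of $\Gred_{k,l}^m$ is required. Your Reedy approach can be made to work, but your description of the latching object of $\Gred_{k,l}^m$ is garbled: with the cardinality degree on $\Epi$, every non-isomorphism strictly \emph{lowers} degree, so $\Epi^+$ consists only of isomorphisms and the latching object of any covariant functor at $u$ is the basepoint. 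Reedy cofibrancy of $\Gred_{k,l}^m$ therefore reduces to $\Sigma_{|u|}$-cofibrancy of each value, not to the ``components from proper quotients'' statement you wrote (which is closer to a matching-type object and does not map into $\Gred_{k,l}^m(u)$ for a covariant functor). The projective-cofibrancy argument sidesteps this bookkeeping entirely.
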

\begin{proof}
First of all, let us construct a natural map. Fix a surjective function $u_1\twoheadrightarrow u_2$. One has a map
\[
[t]^{\wedge u_2} \wedge {\Gred_{k,l}^m(u_1)}_+\to G_{k,l}^m([t]).
\]
The map is defined as follows. First, the surjection $u_1\twoheadrightarrow u_2$ induces a map $ \Gred_{k,l}^m(u_1)\twoheadrightarrow  \Gred_{k,l}^m(u_2)$, and therefore $[t]^{\wedge u_2} \wedge {\Gred_{k,l}^m(u_1)}_+\to  [t]^{\wedge u_2} \wedge {\Gred_{k,l}^m(u_2)}_+$. Second, there is a bijection of sets $[t]^{\wedge u_2}\cong {\underline{t}^{u_2}}_+$, so a non basepoint of this set is a map $f\colon u_2\to \underline{t}$. This defines a map $\Gred_{k,l}^m(u_2)\to \Gred_{k,l}^m(f(u_2))$. Finally, there is an inclusion of ${\Gred_{k,l}^m(f(u_2))}_+$ as a wedge summand of $G_{k,l}^m([t])$, as in Remark \ref{remark: decomposition}.

The map is natural in the variable $[t]$ (exercise for the reader). 

The following diagram commutes because $\Gred_{k,l}^m$ is functorial with respect to surjections.
\[
\begin{array}{ccc}
[t]^{\wedge u_2} \wedge {\Gred_{k,l}^m(u_1)}_+ &\to & [t]^{\wedge u_2} \wedge {\Gred_{k,l}^m(u_2)}_+\\
\downarrow & & \downarrow \\

[t]^{\wedge u_1} \wedge {\Gred_{k,l}^m(u_1)}_+ & \to & G_{k,l}^m([t])
\end{array}
\]
It follows that there is a natural transformation of functors of $[t]$ (recall that $\int_{\mathrm s}$ denotes strict coend)
\begin{equation}\label{eq: strict assembly}
 \int^{u\in \Epi}_{\mathrm s} [t]^{\wedge u} \wedge {\Gred_{k,l}^m(u)}_+ \to G_{k,l}^m([t]).
\end{equation}
We claim that this map is in fact an isomorphism. To see this notice that for each fixed $t$ there is an isomorphism of functors of $u$
\begin{equation}\label{eq: smash power}
[t]^{\wedge u}={\underline{t}^u}_+ \cong \bigvee_{A\subseteq \underline{t}} \Epi(u, A)_+
\end{equation}
For each $A$, the functor $u\mapsto \Epi(u, A)$ is a representable functor $\Epi^{\op}\to \Top$. By coYoneda lemma, there is an isomorphism
\[
 \int^{u\in \Epi}_{\mathrm s} [t]^{\wedge u} \wedge {\Gred_{k,l}^m(u)}_+ \cong \bigvee_{A\subseteq \underline{t}} {\Gred_{k,l}^m(A)}_+
\]
and the right hand is identified with $G_{k,l}^m([t])$, again as in Remark \ref{remark: decomposition}.

It follows that the map~\eqref{eq: strict assembly} is in fact an isomorphism. On the other hand, Equation~\eqref{eq: smash power} shows that the functor $u\mapsto [t]^{\wedge u}$ is cofibrant in the projective model structure on the functor category $[\Epi^{\op}, \Top]$. It follows that the natural map from the homotopy coend to the strict coend is an equivalence:
\[
\int^{u\in \Epi}_{\mathrm h} [t]^{\wedge u} \wedge {\Gred_{k,l}^m(u)}_+ \stackrel{\simeq}{\to} \int^{u\in \Epi}_{\mathrm s} [t]^{\wedge u} \wedge {\Gred_{k,l}^m(u)}_+.
\]
\end{proof}
As a consequence, we have a simplified coend formula for $G_{k,l}^m(X)$ where $X$ is a CW complex.
\begin{prop}\label{prop: little coend}
Let $X$ be a finite pointed CW-complex. There is a homeomorphism and an equivalence, natural in $X$
\[
G_{k,l}^m(X)\cong \int^{u\in \Epi}_{\mathrm s} X^{\wedge u} \wedge {\Gred_{k,l}^m(u)}_+\simeq \int^{u\in \Epi}_{\mathrm h} X^{\wedge u} \wedge {\Gred_{k,l}^m(u)}_+.
\]
The statement remains true if $\Epi$ is replaced with $\Epi^{\le k}$.
\end{prop}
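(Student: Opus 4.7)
The plan is to deduce this from equation \eqref{eq: G_kl^m} and Proposition \ref{prop: reduction} by a Fubini argument for coends, followed by a standard $\Gamma$-space identification.

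By equation \eqref{eq: G_kl^m}, for any finite pointed CW-complex $X$ we have
\[
G_{k,l}^m(X) \cong \int_{\mathrm s}^{[t]\in\Fin} X^t \wedge G_{k,l}^m([t]) \simeq \int_{\mathrm h}^{[t]\in\Fin} X^t \wedge G_{k,l}^m([t]).
\]
Substituting the coend formula for $G_{k,l}^m([t])$ from Proposition \ref{prop: reduction} produces a double coend, and swapping the order of integration gives
\[
G_{k,l}^m(X) \simeq \int_{\mathrm h}^{u\in\Epi} \Bigl(\int_{\mathrm h}^{[t]\in\Fin} X^t \wedge [t]^{\wedge u}\Bigr) \wedge \Gred_{k,l}^m(u)_+,
\]
and analogously for strict coends. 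Fubini for strict coends is automatic, and for homotopy coends it holds because $u \mapsto [t]^{\wedge u}$ is projectively cofibrant (as shown in the proof of Proposition \ref{prop: reduction}).

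The key step is to identify the inner coend $\int^{[t]\in\Fin} X^t \wedge [t]^{\wedge u}$ with $X^{\wedge u}$. This is the standard fact that the $\Gamma$-space $[t] \mapsto [t]^{\wedge u}$ extends, via its Kan extension along $\Fin \hookrightarrow \CW$, to the $u$-fold smash power $X \mapsto X^{\wedge u}$. Concretely, the assembly map sends $(x_1,\ldots,x_t)\wedge g \mapsto (x_{g(1)},\ldots,x_{g(|u|)})$ for a function $g\colon u \to \underline t$ representing a non-basepoint element of $[t]^{\wedge u}$. Using the coend relation induced by factoring $g$ through its image, one reduces to representatives with $g$ surjective, where well-definedness is immediate: any vanishing $x_i=*$ is hit by $g$ and collapses the image to the basepoint. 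A bijectivity check, followed by the usual compact-Hausdorff argument as in the proof of Theorem \ref{t:polynom inverse}, upgrades this to a homeomorphism natural in both $X$ and $u$.

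Finally, the restriction to $\Epi^{\le k}$ follows from the observation that $\Gred_{k,l}^m(u)$ is empty whenever $|u| > \lfloor k/l \rfloor$: the constraints $m_i \ge 1$ and $\sum m_i = m$ force $|u| \le m$, while non-emptiness of $\Inj(\CC^{ml}, \CC^k)$ forces $m \le \lfloor k/l \rfloor \le k$. Hence $\Gred_{k,l}^m(u)_+$ is a point for $|u| > k$, and the coend over $\Epi$ agrees with the one over $\Epi^{\le k}$. The principal technical point is the $\Gamma$-space identification in the middle paragraph; it reduces to explicit bookkeeping with coend relations, but the identification of the inner coend is the key conceptual input.
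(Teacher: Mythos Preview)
Your proof is correct and follows the same Fubini strategy as the paper: combine equation~\eqref{eq: G_kl^m} with Proposition~\ref{prop: reduction}, swap the order of integration, and identify the inner coend $\int^{[t]\in\Fin} X^{[t]}\wedge [t]^{\wedge u}$ with $X^{\wedge u}$. The only real difference is in this last identification: the paper writes $[t]^{\wedge u}$ as the total homotopy cofiber of the cube $A\mapsto \Fin(A_+,[t])$ over subsets $A\subseteq u$ and then applies coYoneda termwise, whereas you construct the assembly map explicitly and invoke a compact--Hausdorff argument as in Theorem~\ref{t:polynom inverse}; your route has the advantage of yielding the strict homeomorphism directly rather than only the homotopy equivalence. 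You also spell out the truncation to $\Epi^{\le k}$, which the paper leaves to the reader.
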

\begin{proof}
We prove the part for the homotopy coend, and the proof of the strict part is identical.
By the standard coend formula (see equation \ref{eq: G_kl^m}), there is an equivalence
\[
G_{k,l}^m(X)\simeq \int^{[t]\in \Fin}_{\mathrm h} X^{[t]} \wedge G_{k,l}^m([t]).
\]
By Proposition~\ref{prop: reduction}, there is an equivalence
\[
G_{k,l}^m([t]) \simeq \int^{u\in \Epi}_{\mathrm h} [t]^{\wedge u} \wedge {\Gred_{k,l}^m(u)}_+.
\]
It follows that there is an equivalence
\[
G_{k,l}^m(X)\simeq \int^{[t]\in \Fin}_{\mathrm h} X^{[t]} \wedge \left(\int^{u\in \Epi}_{\mathrm h} [t]^{\wedge u} \wedge {\Gred_{k,l}^m(u)}_+\right).
\]
By associativity of coend (``Fubini theorem''), the right hand side is equivalent to
\[
\int^{u\in \Epi}_{\mathrm h} \left(\int^{[t]\in \Fin}_{\mathrm h} X^{[t]} \wedge  [t]^{\wedge u}\right) \wedge {\Gred_{k,l}^m(u)}_+.
\]
It remains to show that there is a natural equivalence
\[
\int^{[t]\in \Fin}_{\mathrm h} X^{[t]} \wedge  [t]^{\wedge u} \simeq X^{\wedge u}.
\]
This is elementary. The argument goes as follows. The set $[t]^{\wedge u}$ is equivalent to the total homotopy cofiber of the cubical diagram $A\mapsto \Fin(A_+, [t])$, where $A$ ranges over subsets of $u$, and the maps are induced by collapsing the complement of a subset to the basepoint. By the coYoneda lemma, $\int^{[t]\in \Fin}_{\mathrm h} X^{[t]} \wedge  \Fin(A_+, [t]) \simeq X^A$. It follows that $\int^{[t]\in \Fin}_{\mathrm h} X^{[t]} \wedge  [t]^{\wedge u}$ is equivalent to the total homotopy cofiber of the cubical diagram  $A\mapsto X^A$, where $A$ ranges over subsets of $u$. The total cofiber is equivalent to $X^{\wedge u}$.
\end{proof}

Our next step is to use Proposition~\ref{prop: little coend} to describe the subquotient spectra $\SSS^{k,l}_m$.
Let $\mathtt{I}\colon \Epi^{\op}\to \Top$ be the (unique) functor defined by
\[
\mathtt{I}(\underline{t})=\left\{\begin{array}{cc} S^0 & t = 1 \\ * & t \ne 1 \end{array}\right.
\]
\begin{lem}\label{lemma: stable coend}
There are equivalences, where $\Epi$ can be replaced with $\Epi^{\le k}$
\[
\SSS^{k,l}_m\simeq \Sigma^\infty\int^{\underline{t}\in\Epi}_{\mathrm h}\mathtt{I}\wedge {\Gred_{k,l}^m}_+ \simeq \int^{\underline{t}\in\Epi}_{\mathrm h}\Sigma^\infty \mathtt{I}\wedge {\Gred_{k,l}^m}_+.
\]
\end{lem}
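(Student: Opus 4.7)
The plan is to obtain the two equivalences as a direct consequence of Proposition~\ref{prop: little coend} together with the fact that the stabilization functor $\partial_1$ preserves homotopy colimits. Recall that $\SSS^{k,l}_m$ is defined as the stabilization of $G_{k,l}^m$, i.e., $\SSS^{k,l}_m\simeq \hocolim_n\Sigma^{-n}\Sigma^\infty G_{k,l}^m(S^n)$. The idea is to substitute the coend expression for $G_{k,l}^m(S^n)$ coming from Proposition~\ref{prop: little coend}, then interchange $\hocolim_n$, $\Sigma^{-n}$, and $\Sigma^\infty$ with the coend, and finally compute the resulting stabilization pointwise in $u$.

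First, I would observe that all three of $\Sigma^\infty$, $\Sigma^{-n}$, and $\hocolim_n$ commute with homotopy colimits (the first is a left adjoint, the second is an autoequivalence of $\Sp$, and sequential hocolims commute with all hocolims). Since homotopy coends are homotopy colimits, we may move all three past the coend $\int_{\mathrm h}^{u\in\Epi}$. This yields
\[
\SSS^{k,l}_m\simeq \int^{u\in\Epi}_{\mathrm h}\hocolim_n \Sigma^{-n}\Sigma^\infty\bigl((S^n)^{\wedge u}\wedge {\Gred_{k,l}^m(u)}_+\bigr).
\]

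Next, I would compute the inner stabilization objectwise. For $|u|=t$ we have $(S^n)^{\wedge u}=S^{nt}$, and using $\Sigma^\infty(A\wedge B)\simeq \Sigma^\infty A\wedge \Sigma^\infty B$ we get
\[
\Sigma^{-n}\Sigma^\infty\bigl((S^n)^{\wedge u}\wedge {\Gred_{k,l}^m(u)}_+\bigr)\simeq \Sigma^{n(t-1)}\Sigma^\infty{\Gred_{k,l}^m(u)}_+,
\]
with the transition maps in $\hocolim_n$ being $(t-1)$-fold suspensions. When $t=1$ the sequence is constant at $\Sigma^\infty{\Gred_{k,l}^m(u)}_+$; when $t\geq 2$ the connectivity of the transition maps tends to infinity so the colimit vanishes. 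This is exactly the assertion that the stabilization of the functor $X\mapsto X^{\wedge u}$ is $\Sigma^\infty\mathtt{I}(u)$ (this is the $t=1$ case of the classical fact that the first derivative of the $t$-th smash power functor is a sphere spectrum in degree one and trivial otherwise). Thus the inner term is equivalent to $\Sigma^\infty\mathtt{I}(u)\wedge {\Gred_{k,l}^m(u)}_+$, giving the second stated equivalence.

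For the first equivalence, pull $\Sigma^\infty$ back out of the coend using the fact that $\Sigma^\infty$ commutes with homotopy colimits and with smash products. Finally, the replacement of $\Epi$ by $\Epi^{\leq k}$ is justified by noting that, by definition, $\Gred_{k,l}^m(u)$ is empty (and hence ${\Gred_{k,l}^m(u)}_+$ is the one-point space) whenever $|u|>m$, since the indexing tuples $(m_1,\dots,m_t)$ in Definition~\ref{definition: Gred} are required to satisfy $m_i>0$ and $\sum m_i=m\leq \lfloor k/l\rfloor\leq k$, so the coend picks up nothing for $|u|>k$. The only subtle point is the careful justification that $\partial_1$ commutes with the coend, and the identification of the stabilization of $X\mapsto X^{\wedge t}$; both are standard and pose no real obstacle.
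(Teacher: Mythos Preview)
Your argument is correct and follows essentially the same route as the paper: invoke Proposition~\ref{prop: little coend}, commute stabilization past the homotopy coend, and identify the stabilization of $X\mapsto X^{\wedge u}$ with $\Sigma^\infty\mathtt{I}(u)$. The paper phrases the last step simply as ``$\partial_1(X^{\wedge u})\simeq \Sigma^\infty S^0$ for $|u|=1$ and $*$ for $|u|>1$'', whereas you unpack it explicitly; one small slip is that for $t\ge 2$ it is the connectivity of the \emph{terms} $\Sigma^{n(t-1)}\Sigma^\infty{\Gred_{k,l}^m(u)}_+$ (not of the transition maps) that tends to infinity, forcing the colimit to vanish.
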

\begin{proof}
By definition,
$\SSS^{k,l}_m$ is the stabilization of the functor
\[
X\mapsto \int^{u\in \Epi}_{\mathrm h} X^{\wedge u} \wedge {\Gred_{k,l}^m(u)}_+
\]
The right hand side is a weighted homotopy colimit of reduced functors from $\FCW\to \Top$. Since stabilization commutes with such homotopy colimits, it follows that there is an equivalence
\[
\SSS^{k,l}_m\simeq  \int^{u\in \Epi}_{\mathrm h} \partial_1(X^{\wedge u}) \wedge {\Gred_{k,l}^m(u)}_+
\]
where $\partial_1(X^{\wedge u})$ denotes the stabilization of the functor $X\mapsto X^{\wedge u}$.
Observe that $\partial_1(X^{\wedge u})$ is equivalent to $\Sigma^\infty S^0$ if $|u|=1$, and is equivalent to $*$ if $|u|>1$. It follows that the functor $u\mapsto \partial_1(X^{\wedge u})$ is equivalent to $\Sigma^\infty \mathtt{I}$ as a functor $\Epi^{\op}\to \Sp$. The lemma follows.
\end{proof}
Recall once again that ${\Gred_{k,l}^m}_+$ is defined by the following formula,
\begin{multline*}
{\Gred_{k,l}^m(\underline{t})}_+=\bigvee_{\underset{ m_i>0, \Sigma m_i = m\}}{\{(m_1, \ldots, m_t)\mid}}  {\Inj(\CC^{ml}, \CC^k)/_{\prod_{i=1}^t U(m_i)}}_+ \cong  \\ \cong \bigvee_{\underset{ m_i>0, \Sigma m_i = m\}}{\{(m_1, \ldots, m_t)\mid}} U(k)/\left(\prod_{i=1}^t U(m_i) \times U(k-lm)\right)_+.
\end{multline*}
In the special case $l=1, k=m$ we get that
\begin{equation}\label{eq: basic case}
{\Gred_{m,1}^{m}(\underline{t})}_+= \bigvee_{\underset{ m_i>0, \Sigma m_i = m\}}{\{(m_1, \ldots, m_t)\mid}} U(m)/\prod_{i=1}^t U(m_i)_+.
\end{equation}
And in general, there are equivalences
\begin{multline} \label{equation: general reduction}
{\Gred_{k,l}^m(\underline{t})}_+\simeq \bigvee_{\underset{ m_i>0, \Sigma m_i = m\}}{\{(m_1, \ldots, m_t)\mid}} U(k)/U(k-lm)_+\wedge_{U(m)} U(m)/\prod_{i=1}^t U(m_i)_+\simeq \\ \simeq U(k)/U(k-lm)_+\wedge_{U(m)} {\Gred_{m,1}^{m}(\underline{t})}_+.
\end{multline}
It is easily checked that the last equivalence is functorial in $\underline{t}$ and therefore we have an equivalence of functors $\Epi\to \Top$
\[
{\Gred_{k,l}^m}_+\simeq U(k)/U(k-lm)_+\wedge_{U(m)} {\Gred_{m,1}^{m}}_+.
\]
And upon applying lemma~\ref{lemma: stable coend} we obtain an equivalence
\begin{equation}\label{equation: stable reduction}
\SSS^{k,l}_{m}\simeq U(k)/U(k-lm)_+\wedge_{U(m)}\SSS^{m,1}_{m}= \Inj(\CC^{lm}, \CC^k)_+\wedge_{U(m)} \SSS^{m,1}_{m}.
\end{equation}
We remind the reader that $U(m)$ is considered a subgroup of $U(k)$ via the diagonal map $U(m)\hookrightarrow U(lm)$ followed by the inclusions $U(lm)\hookrightarrow U(lm)\times U(k-lm)\hookrightarrow U(k)$. Alternatively, $U(m)$ acts on $\Inj(\CC^{lm}, \CC^k)$ through its obvious action on $\CC^{lm}=\CC^l\otimes \CC^m$.
%
\section{Connection with the complex of direct-sum decompositions}\label{ss: Ln}
Equation~\eqref{equation: stable reduction} reduces the problem of describing $\SSS^{k,l}_{m}$ for general $k, l, m$ to describing $\SSS^{m,1}_{m}$ for all $m$. In this section we use Lemma~\ref{lemma: stable coend} to show that $\SSS^{m,1}_{m}$ is equivalent to the suspension spectrum of the complex of direct-sum decompositions of $\CC^m$, which we denote $\cL^\diamond_m$. This leads to a complete description of $\SSS^{k,l}_{m}$ in terms of the complexes $\cL^\diamond_m$ (Theorem~\ref{theorem: main}).

The complexes $\cL^\diamond_m$ were first introduced in~\cite{Ar}, and were studied in detail in~\cite{Banff} and~\cite{AL}. They play a role in orthogonal calculus, and also in describing the subquotients of the rank filtration of $K$-theory~\cite{AL-Crelle, AL-Fundamenta}. They have some remarkable homotopical properties, that we will recall in the next section (Proposition~\ref{proposition: L_m facts}).

Our proof that $\SSS^{m,1}_{m}$ is equivalent to the suspension spectrum of $\cL^\diamond$ goes though an intermediate complex, which we call the complex of {\it ordered} direct-sum decompositions. Let us give the formal definition.
\begin{define}
Let $\cD^{\mathrm{o}}_m$ be the following category objects in topological spaces. Its objects are ordered tuples $(E_1, \ldots, E_t)$ of pairwise orthogonal proper, non-trivial vector subspaces of $\CC^m$, whose direct sum is $\CC^m$. A morphism $(E_1, \ldots, E_t)\to (F_1, \ldots, F_s)$ consists of a surjective function $\alpha\colon \{1,\ldots, t\}\twoheadrightarrow \{1,\ldots, s\}$ such that for each $1\le i\le t$, $E_i\subseteq F_{\alpha(i)}$.
\end{define}
We call the category $\cD^{\mathrm{o}}_m$ the category of proper, {\it ordered} direct-sum decompositions of $\CC^m$. The set of objects and the set of morphisms both have a topology. There is a natural action of $U(m)$ on $\cD^{\mathrm{o}}_m$, and object and morphism sets of are topologized as unions of $U(m)$-orbits.

There is a convenient presentation of $\cD^{\mathrm{o}}_m$ as the Grothendieck construction applied to the functor $\Gred_{m,1}^m$ of Definition~\ref{definition: Gred}. Let us recall the definition of (a version of) the Grothendieck construction.
\begin{define}\label{def: grothendieck}
Suppose $\cC$ is a small category and $F\colon \cC \to \uTop$ is a functor. The Grothendieck construction on $F$ (a.k.a the wreath product of $\cC$ and $F$) is the following pointed topological category, denoted $\cC\wr F$. The objects of $\cC\wr F$ are pairs $(c, x)$ where $c$ is an object of $\cC$, and $x\in F(c)$. A morphism $(c, x)\to (d, y)$ in $\cC\wr F$ is a morphism $\alpha\colon c \to d$ in $\cC$ such that $F(\alpha)(x)=y$. The space of objects of $\cC\wr F$ is topologized as the disjoint union $\coprod_c F(c)$ indexed by the objects of $\cC$, and the space of morphisms is topologized as the disjoint union $\coprod_{c\to d}F(c)$, indexed by morphisms of $\cC$.
\end{define}
The following well-known lemma can be thought of as a topological analogue of Thomason's homotopy colimit theorem.
\begin{lem}\label{lemma: topological thomason theorem}
Suppose $\cC$ is a small category and $F\colon \cC \to \uTop$ is a functor. There is a natural equivalence
\[
\hocolim_{\cC}F\simeq |\cC\wr F|.
\]
\end{lem}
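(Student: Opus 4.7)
The strategy is to exhibit both sides as geometric realizations of the same simplicial space, namely the bar construction $B_\bullet(*, \cC, F)$, and then invoke the classical fact that this bar construction models the homotopy colimit.

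First I would unpack the nerve of $\cC \wr F$. An $n$-simplex in $N(\cC\wr F)$ is, by definition of the morphisms in Definition~\ref{def: grothendieck}, a sequence of composable morphisms
\[
(c_0,x_0) \xrightarrow{\alpha_1} (c_1,x_1) \xrightarrow{\alpha_2} \cdots \xrightarrow{\alpha_n} (c_n,x_n)
\]
where each $\alpha_i\colon c_{i-1}\to c_i$ is a morphism of $\cC$ and $F(\alpha_i)(x_{i-1})=x_i$. The constraint $F(\alpha_i)(x_{i-1})=x_i$ means that $x_1,\ldots,x_n$ are determined by $x_0$ and the underlying chain $c_0\to\cdots\to c_n$. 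Hence there is a natural homeomorphism
\[
N_n(\cC \wr F) \;\cong\; \coprod_{c_0\to c_1\to\cdots\to c_n} F(c_0),
\]
the coproduct ranging over all chains of $n$ composable morphisms in $\cC$. A quick check of face and degeneracy maps shows that they are exactly the standard ones: inner faces compose morphisms in $\cC$ and leave $x_0$ untouched, the $0$-th face applies $F(\alpha_1)$ to $x_0$ and drops $c_0$, the last face drops $c_n$, and degeneracies insert identities.

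Next I would identify this simplicial space with the Bousfield--Kan bar construction / simplicial replacement $B_\bullet(*, \cC, F)$ used to compute the homotopy colimit. By standard homotopy theory of topological diagrams, the realization $|B_\bullet(*, \cC, F)|$ is a model for $\hocolim_\cC F$; this is the topological analogue of the usual bar-construction formula, e.g.\ via the two-sided simplicial coend $\int^{c\in\cC} B(c/\cC)\times F(c)$, which is naturally homeomorphic to $|B_\bullet(*,\cC,F)|$. Both identifications are natural in $F$.

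Combining the two steps gives a natural homeomorphism
\[
|\cC\wr F| = |N_\bullet(\cC\wr F)| \;\cong\; |B_\bullet(*, \cC, F)|
\]
which upon passing through the standard equivalence $|B_\bullet(*,\cC,F)|\simeq \hocolim_\cC F$ yields the lemma. The main potential obstacle is a point-set one: verifying that the bar construction genuinely models $\hocolim$ in the category $\uTop$ of (compactly generated weak Hausdorff) unpointed spaces. This is handled by noting that the simplicial space $B_\bullet(*, \cC, F)$ is Reedy cofibrant (its latching maps are coproducts of identities of the $F(c)$'s along inclusions of chains containing no identities), so its realization is a homotopy colimit in the Reedy sense, which agrees with $\hocolim_\cC F$.
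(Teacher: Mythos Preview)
Your proof is correct and follows essentially the same approach as the paper's: both identify the simplicial nerve of $\cC\wr F$ with the Bousfield--Kan simplicial model $\coprod_{c_0\to\cdots\to c_n} F(c_0)$ for $\hocolim_\cC F$, and both check that the face and degeneracy maps agree. Your write-up is more thorough in one respect: you explicitly address the point-set issue of why the realization of this simplicial space computes the homotopy colimit (via Reedy cofibrancy), which the paper simply takes as the standard Bousfield--Kan model.
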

\begin{proof}
It is easy to see that the simplicial nerve of $\cC\wr F$ is {\it isomorphic}, as a simplicial space, to Bousfield and Kan's simplicial model for $\hocolim_{\cC}F$. In fact, both simplicial spaces are given in simplicial degree $k$ by the space
\[
\coprod_{c_0\to\cdots\to c_k} F(c_0).
\]
The $i$-th face map $d_i$ is defined by dropping $c_i$ and, if $i=0$, using the functoriality of $F$ to map $F(c_0)$ to $F(c_1)$. The degeneracy map $s_i$ is defined by duplicating $c_i$.
\end{proof}
Now recall that we have a functor $\Gred_{m,1}^m\colon \Epi\to \uTop$ (Definition~\ref{definition: Gred}). Let $\Epi^{> 1}$ be the full subcategory of $\Epi$ consisting of sets of cardinality greater than $1$. By slight abuse of notation we denote the restriction of $\Gred_{m,1}^m$ to $\Epi^{>1}$ also by $\Gred_{m,1}^m$.
The  following lemma is straightforward from the definitions:
\begin{lem}\label{lemma: Grothendieck}
There is an isomorphism of topological categories \[\Epi^{>1}\wr \Gred_{m,1}^m\cong {\cD^{\mathrm{o}}_m}.\]
\end{lem}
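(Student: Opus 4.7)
The plan is to unpack both sides and exhibit a tautological identification. On objects, an object of $\Epi^{>1}\wr \Gred_{m,1}^m$ is by Definition~\ref{def: grothendieck} a pair $(\underline{t},x)$ with $|\underline{t}|>1$ and $x\in\Gred_{m,1}^m(\underline{t})$; by Definition~\ref{definition: Gred} with $k=m$, $l=1$, such an $x$ lies in a unique component indexed by a tuple $(m_1,\ldots,m_t)$ with all $m_i>0$ and $\sum m_i=m$, and equals a coset in $U(m)/\prod_{i=1}^t U(m_i)$. I would use the standard $U(m)$-equivariant homeomorphism
\[
U(m)\big/\textstyle\prod_{i=1}^t U(m_i)\;\cong\;\bigl\{(E_1,\ldots,E_t)\bigm| E_i\perp E_j,\ \bigoplus_i E_i=\CC^m,\ \dim E_i=m_i\bigr\}
\]
to match $(\underline{t},x)$ with an ordered orthogonal decomposition. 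The condition $m_i>0$ forces each $E_i$ to be nontrivial, and $|\underline{t}|>1$ combined with $m_i>0$ forces each $E_i$ to be proper, which is exactly the object data of $\cD^{\mathrm{o}}_m$.

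Next I would identify morphisms. A morphism $(\underline{t},x)\to(\underline{s},y)$ in the wreath product is a surjection $\alpha\colon\underline{t}\twoheadrightarrow\underline{s}$ with $\Gred_{m,1}^m(\alpha)(x)=y$. Unpacking the functoriality of $\Gred_{m,1}^m$ as defined via the map~\eqref{eq: functor}: if $x$ corresponds to $(E_1,\ldots,E_t)$ in the component $(m_1,\ldots,m_t)$, then $\alpha_*(m_1,\ldots,m_t)=(n_1,\ldots,n_s)$ with $n_j=\sum_{i\in\alpha^{-1}(j)}m_i$, and $\Gred_{m,1}^m(\alpha)(x)$ is the decomposition $(F_1,\ldots,F_s)$ defined by $F_j=\bigoplus_{i\in\alpha^{-1}(j)}E_i$. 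Therefore the equation $\Gred_{m,1}^m(\alpha)(x)=y=(F_1,\ldots,F_s)$ is equivalent to the condition $E_i\subseteq F_{\alpha(i)}$ for every $1\le i\le t$, which is precisely the morphism condition defining $\cD^{\mathrm{o}}_m$. Composition on both sides is composition of surjections, so it is automatically preserved.

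Finally I would check that the topologies agree. The object space of $\Epi^{>1}\wr\Gred_{m,1}^m$ is topologized as the disjoint union $\coprod_{|\underline{t}|>1}\Gred_{m,1}^m(\underline{t})=\coprod_{(m_1,\ldots,m_t)}U(m)/\prod U(m_i)$, and the object space of $\cD^{\mathrm{o}}_m$ is topologized as the disjoint union of $U(m)$-orbits of decompositions of fixed type; these agree via the homeomorphism above. For morphisms, Definition~\ref{def: grothendieck} topologizes $\Mor(\Epi^{>1}\wr\Gred_{m,1}^m)$ as $\coprod_{\alpha}\Gred_{m,1}^m(\mathrm{source}(\alpha))$, which under the identification becomes the disjoint union of orbits of pairs (decomposition, surjection) $\alpha$, and this matches how $\Mor(\cD^{\mathrm{o}}_m)$ is topologized as a union of $U(m)$-orbits indexed by combinatorial type.

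No step here is genuinely difficult; the only point that deserves care is verifying that the map~\eqref{eq: functor} defining the functoriality of $\Gred_{m,1}^m$ really does correspond under the homogeneous-space identification to the direct-sum operation $F_j=\bigoplus_{i\in\alpha^{-1}(j)}E_i$ on subspaces, which is precisely what was built into the construction of~\eqref{eq: functor} via the inclusions $\CC^{n_j}\hookrightarrow\CC^{\sum_{i\in\alpha^{-1}(j)}m_i}$.
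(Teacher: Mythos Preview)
Your proposal is correct and is precisely the kind of unpacking the paper has in mind; the paper itself offers no argument beyond declaring the lemma ``straightforward from the definitions.'' Your explicit identification of objects via the homeomorphism $U(m)/\prod_i U(m_i)\cong\{\text{ordered orthogonal decompositions of type }(m_1,\ldots,m_t)\}$ and of morphisms via the direct-sum description of the functoriality~\eqref{eq: functor} is exactly what makes the lemma tautological.
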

Given a space $X$, let $X^\diamond$ denote the unreduced suspension of $X$. We have the following connection between $\SSS^{m,1}_{m}$ and $\cD^{\mathrm{o}}_m$.
\begin{prop}\label{prop: ordered}
There is a natural equivalence
\[
\SSS^{m,1}_{m}\simeq \Sigma^\infty |\cD^{\mathrm{o}}_m|^\diamond.
\]
\end{prop}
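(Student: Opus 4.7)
The plan is to realize $\SSS^{m,1}_m$ as the cofiber of a natural map coming from a cone structure on a Grothendieck construction. By Lemma~\ref{lemma: stable coend}, we already have
\[\SSS^{m,1}_m\simeq\int^{u\in\Epi}_{\mathrm h}\Sigma^\infty\mathtt{I}(u)\wedge\Gred_{m,1}^m(u)_+,\]
so the task is to compute this homotopy coend explicitly.

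First, I would introduce two auxiliary functors $\Epi^{\op}\to\Top_*$: the constant functor $\epsilon\equiv S^0$, and the functor $J$ with $J(\underline{1})=\ast$ and $J(\underline{t})=S^0$ for $t>1$. There is a natural transformation $\iota\colon J\to\epsilon$ defined as the basepoint inclusion $\ast\hookrightarrow S^0$ at $\underline{1}$ and as $\id_{S^0}$ elsewhere; its naturality reduces to a short check in the only nontrivial case, a surjection $\underline{t}\twoheadrightarrow\underline{1}$ with $t>1$. Because $\iota$ is pointwise a cofibration, its pointwise cofiber computes the cofiber in the projective model structure on $\Top_*^{\Epi^{\op}}$, and this cofiber is weakly equivalent to $\mathtt{I}$ (at $\underline{1}$ it is $S^0/\ast=S^0$, at $\underline{t}$ for $t>1$ it is the contractible reduced cone on $S^0$). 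Thus $J\to\epsilon\to\mathtt{I}$ is a cofiber sequence of functors.

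Applying the homotopy coend $\int^u_{\mathrm h}(-)\wedge\Gred_{m,1}^m(u)_+$, which preserves cofiber sequences, yields a cofiber sequence of pointed spaces whose rightmost term stabilizes to $\SSS^{m,1}_m$. The leftmost term $\int^u_{\mathrm h} J\wedge\Gred_{m,1}^m{}_+$ only sees chains in $\Epi^{>1}$, so equals $\hocolim_{\Epi^{>1}}\Gred_{m,1}^m{}_+\simeq|\cD^{\mathrm{o}}_m|_+$ by Lemmas~\ref{lemma: topological thomason theorem} and~\ref{lemma: Grothendieck}. The middle term equals $\hocolim_\Epi\Gred_{m,1}^m{}_+\simeq|\Epi\wr\Gred_{m,1}^m|_+$; since $(\underline{1},\ast)$ is a terminal object of $\Epi\wr\Gred_{m,1}^m$, its nerve is canonically the unreduced cone $C|\cD^{\mathrm{o}}_m|$ with base $|\cD^{\mathrm{o}}_m|$ and apex $(\underline{1},\ast)$. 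The map between the two terms is the tautological inclusion of base into cone, and its pointed cofiber is $C|\cD^{\mathrm{o}}_m|/|\cD^{\mathrm{o}}_m|\simeq|\cD^{\mathrm{o}}_m|^\diamond$ (based at the collapsed base). Applying $\Sigma^\infty$ then gives the desired equivalence.

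I expect the main subtlety to be choosing the right direction of the auxiliary cofiber sequence. The ``opposite'' candidate $\mathtt{I}\to\epsilon\to J$ is not a natural transformation: at a surjection $\underline{t}\twoheadrightarrow\underline{1}$ with $t>1$, the naturality square would force the composite $S^0=\mathtt{I}(\underline{1})\to\mathtt{I}(\underline{t})=\ast\hookrightarrow S^0=\epsilon(\underline{t})$ (which is the zero map) to equal $\id_{S^0}=\epsilon(\underline{t}\to\underline{1})\circ\id_{S^0}$, which it does not. After one notices the correct direction $J\to\epsilon\to\mathtt{I}$, the rest is routine identification via the Grothendieck-construction model for homotopy colimits together with the standard fact that $CY/Y\simeq Y^\diamond$.
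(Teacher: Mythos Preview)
Your proof is correct and follows essentially the same approach as the paper. The paper introduces the same auxiliary functors (calling your $\epsilon$ and $J$ by the names $\mathtt{S}$ and $\mathtt{S}^{>1}$), forms the same cofiber sequence $\mathtt{S}^{>1}\to\mathtt{S}\to\mathtt{I}$, and identifies the resulting coends in the same way via Lemmas~\ref{lemma: topological thomason theorem} and~\ref{lemma: Grothendieck}. The only presentational difference is that the paper collapses the middle term directly to $\Gred_{m,1}^m(\underline{1})_+=S^0$ using finality of $\underline{1}$ in $\Epi$, whereas you keep the explicit cone model $C|\cD^{\mathrm o}_m|_+$; conversely, the paper justifies the restriction of the $J$-coend to $\Epi^{>1}$ via a left Kan extension argument, which is a bit cleaner than ``only sees chains in $\Epi^{>1}$.''
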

\begin{proof}
We saw in lemma~\ref{lemma: stable coend} that
\[
\SSS^{m,1}_{m}\simeq \Sigma^\infty \int^{\Epi}_{\mathrm h}\mathtt{I}\wedge  {\Gred_{m,1}^m}_+
\]
where $\mathtt{I}\colon\Epi^{\op}\to \Top$ is the functor that sends $\underline{1}$ to $ S^0$ and sends all other objects to $*$. So we need to show that there is an equivalence of pointed spaces
\[
\int^{\Epi}_{\mathrm h}\mathtt{I}\wedge  {\Gred_{m,1}^m}_+ \simeq |\cD^{\mathrm{o}}_m|^\diamond.
\]

Let $\mathtt{S}\colon \Epi^{\op}\to \Top$ be the constant functor $\mathtt{S}(\underline{t})\equiv S^0$. Let $\mathtt{S}^{>1}\colon \Epi^{\op}\to \Top$ be the functor $\mathtt{S}^{>1}(\underline{1})=*$ and $\mathtt{S}^{>1}(\underline{t})\equiv  S^0$ for $t>1$. Then there is a homotopy cofibration sequence of functors $\mathtt{S}^{>1}\to \mathtt{S}\to \mathtt{I}$. It follows that there is a homotopy cofibration sequence of coends
\[
 \int^{\Epi}_{\mathrm h}\mathtt{S}^{>1}\wedge  {\Gred_{m,1}^m}_+ \to  \int^{\Epi}_{\mathrm h}\mathtt{S}\wedge  {\Gred_{m,1}^m}_+ \to  \int^{\Epi}_{\mathrm h}\mathtt{I}\wedge {\Gred_{m,1}^m}_+
\]
It is a standard fact that $$ \int^{\Epi}_{\mathrm h}\mathtt{S}\wedge  {\Gred_{m,1}^m} \simeq {\hocolim_{\Epi}}^*  ({\Gred_{m,1}^m}_+)\cong (\hocolim_{\Epi}  {\Gred_{m,1}^m})_+$$
(here $\hocolim^*$ denotes pointed homotopy colimit, while $\hocolim$ denotes unpointed homotopy colimit).
Since $\Epi$ has a final object $\underline{1}$, it follows that $${\hocolim_{\Epi}}^*  {\Gred_{m,1}^m}_+\simeq {\Gred_{m,1}^m}_+ (\underline{1}) = S^0.$$
On the other hand, since $\underline{1}$ is the initial object of $\Epi^{\op}$, and $\mathtt{S}^{>1}(\underline{1})=*$ it follows easily that $\mathtt{S}^{>1}$ is equivalent to the functor obtained by restricting $\mathtt{S}$ to the subcategory ${\Epi^{>1}}^{\op}$ of sets of cardinality greater than $1$, and then taking derived left Kan extension back to ${\Epi}^{\op}$.
By standard adjunctions, it follows that there are equivalences
\[
 \int^{\Epi}_{\mathrm h}\mathtt{S}^{>1}\wedge  {\Gred_{m,1}^m}_+ \simeq  \int^{\Epi^{>1}}_{\mathrm h}\mathtt{S}\wedge  {\Gred_{m,1}^m}_+\simeq (\hocolim_{\Epi^{>1}} {\Gred_{m,1}^m})_+
\]
It follows that there is a homotopy cofibration sequence
\[
(\hocolim_{\Epi^{>1}} {{\Gred_{m,1}^m}})_+ \to  S^0 \to  \int^{\underline{t}\in\Epi}_{\mathrm h}\mathtt{I}\wedge {\Gred_{m,1}^m}_+
\]
By lemma~\ref{lemma: Grothendieck} ${\cD^{\mathrm{o}}_m}$ is the Grothendieck construction on $\Gred_{m,1}^m$. It follows by Lemma~\ref{lemma: topological thomason theorem} that
\[
\hocolim_{\Epi^{>1}} {\Gred_{m,1}^m} \simeq |\cD^{\mathrm{o}}_m|.
\]
So we have a homotopy cofibration sequence
\[
|\cD^{\mathrm{o}}_m|_+ \to S^0 \to \int^{\underline{t}\in\Epi}_{\mathrm h}\mathtt{I}\wedge {\Gred_{m,1}^m}_+
\]
This implies that $\int^{\underline{t}\in\Epi}_{\mathrm h}\mathtt{I}\wedge {\Gred_{m,1}^m}_+\simeq |\cD^{\mathrm{o}}_m|^\diamond$.
\end{proof}
Our next step is to show that $\cD^{\mathrm o}_m$ can be replaced with a smaller category, which we call the poset of {\it unordered} direct-sum decompositions. First, the definition.

\begin{define}\label{definition: decompositions}
Let $\cD_m$ be the following category objects in topological spaces. Its objects are  {\em unordered} sets $\{E_i\mid i\in I\}$ of pairwise orthogonal proper, non-trivial vector subspaces of $\CC^m$, whose direct sum is $\CC^m$. There is a unique morphism $\{E_i\mid i\in I\}\to \{F_j\mid j\in J\}$ if for each $i\in I$ there is a (necessarily unique) $j\in J$ such that $E_i\subseteq F_j$. In keeping with recent literature, the geometric realization of $\cD_m$ will be denoted $\cL_m$, and its unreduced suspension is therefore $\cL_m^\diamond$.
\end{define}

As with $\cD^{\mathrm{o}}_m$, there is a natural action of $U(m)$ on $\cD_m$ and both the sets of objects and morphisms of $\cD_m$ are topologized as unions of $U(m)$-orbits.
We note that for any two objects $P, Q$ of $\cD^{\mathrm{o}}_m$ there is at most one morphism from $P$ to $Q$. In other words, $\cD^{\mathrm{o}}_m$ is {\it a topological preorder}. By contrast, the category $\cD_m$ is a topological poset: it is the poset of isomorphism classes of $\cD^{\mathrm{o}}_m$. The category $\cD_m$ will be referred to as the category, or poset, of proper, {\em unordered} direct-sum decompositions of $\CC^m$.

There is a topological functor $q\colon \cD^{\mathrm{o}}_m\to \cD_m$, which forgets the order of the components.
\begin{prop}\label{proposition: ordered to unordered}
The natural functor $q\colon \cD^{\mathrm{o}}_m\to \cD_m$ induces an equivalence of geometric realizations $|\cD^{\mathrm{o}}_m|\xrightarrow{\simeq} |\cD_m|$
\end{prop}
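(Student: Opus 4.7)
The plan is to apply a version of Quillen's Theorem A to the forgetful functor $q\colon \cD^{\mathrm{o}}_m\to \cD_m$. Since $\cD_m$ is a topological poset, for each object $P = \{E_i\mid i\in I\}\in \cD_m$ the over-category $q/P$ coincides with the full subcategory of $\cD^{\mathrm{o}}_m$ on those ordered decompositions $(F_1,\ldots,F_s)$ that refine $P$. The bulk of the work is to show that the classifying space $|q/P|$ is contractible, and then to upgrade this combinatorial contractibility to the topological statement.

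First I would establish the combinatorial contractibility. Observe that $\cD^{\mathrm{o}}_m$ is in fact a preorder: between any two ordered decompositions the required surjection $\alpha$, if it exists, is forced pointwise by the containment conditions $E_i\subseteq F_{\alpha(i)}$, hence is unique. Inside $q/P$ sits the full sub-groupoid $T_P$ whose objects are the $|I|!$ orderings of $P$ itself; by uniqueness every pair of objects of $T_P$ is connected by a unique isomorphism, so $T_P$ is an indiscrete groupoid and $|T_P|\simeq *$. I would then show that the inclusion $T_P\hookrightarrow q/P$ admits a left adjoint: fix any single ordering $\sigma_0$ of $P$ and set $L(F_1,\ldots,F_s):= (E_{\sigma_0(1)},\ldots,E_{\sigma_0(|I|)})$, with unit the unique morphism in $\cD^{\mathrm{o}}_m$ from $(F_1,\ldots,F_s)$ to $L(F_1,\ldots,F_s)$. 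The universal property $\Hom_{q/P}((F_\bullet),(P,\sigma'))\cong \Hom_{T_P}(L(F_\bullet),(P,\sigma'))$ reduces to comparing two singletons, both forced by the preorder structure, so the adjunction is automatic. Consequently $|q/P|\simeq |T_P|\simeq *$.

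To upgrade this fiberwise contractibility to the required topological equivalence, I would use the description of $\cD^{\mathrm{o}}_m$ as the Grothendieck construction $\Epi^{>1}\wr \Gred^m_{m,1}$ from Lemma~\ref{lemma: Grothendieck} combined with Lemma~\ref{lemma: topological thomason theorem}, which gives $|\cD^{\mathrm{o}}_m|\simeq \hocolim_{\Epi^{>1}}\Gred^m_{m,1}$. A parallel presentation of $\cD_m$ is obtained by quotienting $\Gred^m_{m,1}(\underline t)$ by the $\Sigma_t$-action permuting the ordered parts, and the functor $q$ becomes the evident comparison map of diagrams. Concretely, one stratifies $\cD_m$ by unordered partition type of $m$; on each stratum $q$ restricts to a principal bundle for a finite product of symmetric groups, and over any point of the stratum the fiber of $|q|$ is the classifying space of the indiscrete groupoid of orderings, which is contractible. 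These local deformation retractions patch together along the stratification to yield the global equivalence $|\cD^{\mathrm{o}}_m|\xrightarrow{\simeq}|\cD_m|$.

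The hard part will be the topological upgrade of the adjunction argument: combinatorially one must choose an ordering $\sigma_0$ of $P$ to define the left adjoint, but no globally continuous choice of such $\sigma_0$ exists as $P$ varies over $\cD_m$. The obstruction is resolved precisely by the observation that the structure group acts through an indiscrete groupoid on the fibers, so even though no global section exists, the fiberwise contractibility is enough to conclude via a topological version of Quillen's Theorem A (equivalently, the $\infty$-categorical cofinality criterion applied to the Grothendieck-construction presentations above).
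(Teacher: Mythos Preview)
Your overall strategy matches the paper's: both invoke a topological Quillen Theorem A for $q$. Your contractibility argument for $|q/P|$ is correct but overelaborate --- the paper simply observes that any single ordering $\widetilde\Lambda$ of $P$ is already a terminal object of $q/\Lambda$, since from any refining ordered decomposition there is a unique morphism to $\widetilde\Lambda$; no adjunction with $T_P$ is needed.

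The real gap in your proposal is the ``topological upgrade'' step. You correctly identify that the fiberwise contractibility does not automatically globalize, and you ultimately fall back on ``a topological version of Quillen's Theorem A'' without naming one or checking its hypotheses. Your sketch of ``patching local deformation retractions along the stratification'' is not a proof; such patching arguments can fail without additional control on how strata fit together. The paper closes this gap by appealing to a specific result, \cite[Theorem 4.7]{EbRW}, which requires not just contractible comma categories but two fibration conditions: that the target map from the morphism space to the object space of $\cD^{\mathrm{o}}_m$ is a fibration, and that the analogous map for $q/\cD_m$ over $\cD_m$ is a fibration. The paper verifies both by noting that each is a $U(m)$-equivariant map between disjoint unions of $U(m)$-orbits, hence automatically a fibration. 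You have the ingredients for this (the Grothendieck-construction description you cite), but you never actually state or verify the fibrancy conditions that make the topological Theorem A applicable.
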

\begin{proof}
We are going to use Quillen's theorem A. We need a version of it that is valid for topological categories. There are several such versions scattered in the literature, we will use~\cite[Theorem 4.7]{EbRW}. According to this theorem, it is enough if we prove the following
\begin{enumerate}
\item \label{contractible}For every object $\Lambda$ of $\cD_m$, the classifying space of the over category $q/\Lambda$ is contractible.
\item \label{fibrant} The map from the morphism space of $\cD^{\mathrm{o}}_m$ to the object space of $\cD^{\mathrm{o}}_m$, that sends every morphism to its target, is a fibration (in the language of~\cite{EbRW}, this means that $\cD^{\mathrm{o}}_m$ is right fibrant).
\item \label{fibration} The map from the space of objects of the over category $q/\cD_m$ to the space of objects of $\cD_m$, that sends a morphism to its target, is a fibration. Here $q/\cD_m$ is the category of arrows in $\cD_m$ of the form $q(\Theta)\to \Lambda$, where $\Theta$ is an object of $\cD^{\mathrm{o}}_m$.
\end{enumerate}
For part~\eqref{contractible}, let $\Lambda=\{F_i \mid i\in I\}$ be an object of $\cD_m$, i.e., an unordered collection of pairwise orthogonal non-trivial subspaces of $\CC^m$ whose direct sum is $\CC^m$. Let $t$ be the number of elements of $I$ and choose a bijection $I\cong \{1, \ldots, t\}$. Then $\widetilde \Lambda=(F_1, \ldots, F_t)$ is a choice of lift of $\Lambda$ to an object of $\cD^{\mathrm{o}}_m$. An object of $q/\Lambda$ consists of an object $\Theta=(E_1, \ldots, E_s)$ such that each $E_i$ is a subspace of $F_j$ for some (necessarily unique) $j$. It follows that there exists a {\em unique} surjection $\alpha\colon \{1, \ldots, s\}\twoheadrightarrow \{1, \ldots, t\}$ such that $E_i\subset F_{\alpha(i)}$ for all $i$. This means that there is a unique morphism from $\Theta$ to $\widetilde \Lambda$ in $q/\Lambda$. Thus the category $q/\Lambda$ has a (not necessarily unique) terminal object, and therefore its classifying space is contractible.

For part~\eqref{fibrant}, using the identification of $\cD^{\mathrm{o}}_m$ with the Grothendieck construction $\Epi^{>1}\wr \Gred_{m,1}^m$ (Lemma~\ref{lemma: Grothendieck}), the map from the space of morphisms of $\cD^{\mathrm{o}}_m$ to the space of objects of $\cD^{\mathrm{o}}_m$ which sends each morphism to its target, has the following form
\begin{equation}\label{eq: target}
\coprod_{\underline{s}\twoheadrightarrow \underline{t}\in \Epi^{>1}} \coprod_{\underset{ m_i>0, \Sigma m_i = m\}}{\{(m_1, \ldots, m_s)\mid}} U(m)/\prod_{i=1}^s U(m_i) \to \coprod_{\underline{t}\in \Epi^{>1}} \coprod_{\underset{ n_i>0, \Sigma n_j = m\}}{\{(n_1, \ldots, n_t)\mid}} U(m)/\prod_{j=1}^t U(n_j)
\end{equation}
where for every surjective function $\underline{s}\twoheadrightarrow \underline{t}$, the space $U(m)/\prod_{i=1}^s U(m_i)$ is sent to $U(m)/\prod_{j=1}^s U(n_j)$, where for each $j=1, \ldots, t$, $n_j=\Sigma_{i\in \alpha^{-1}(j)} m_i$, by the canonical quotient map associated with the sub-conjugation of $\prod_{i=1}^s U(m_i)$ into $\prod_{j=1}^s U(n_j)$ induced by $\alpha$. The map~\eqref{eq: target} is clearly a fibration. Indeed, it is a $U(m)$ equivariant map between disjoint union of $U(m)$-orbits, and such a map is necessarily a fibration.

Finally, the proof of part~\eqref{fibration} is similar to that of part~\eqref{fibrant}. Since $\cD_m$ is the poset of isomorphism classes of the pre-order $\cD^{\mathrm{o}}_m$, the space of objects of the category $q/\cD_m$ is the quotient of the space of morphisms of $\cD^{\mathrm{o}}_m$ by the action of the groupoid of isomorphisms of the target. Similarly, the space of objects of $\cD_m$ is the quotient of the space of objects of $\cD^{\mathrm{o}}_m$ by the groupoid of isomorphisms. This means that we have the following map
\[
\left(\coprod_{\underline{s}\twoheadrightarrow \underline{t}} \coprod_{\underset{ m_i>0, \Sigma m_i = m\}}{\{(m_1, \ldots, m_s)\mid}} U(m)/\prod_{i=1}^s U(m_i)\right)_{\operatorname{Iso}(t)} \to \left(\coprod_{\underline{t}} \coprod_{\underset{ n_i>0, \Sigma n_j = m\}}{\{(n_1, \ldots, n_t)\mid}} U(m)/\prod_{j=1}^t U(n_j)\right)_{\operatorname{Iso}(t)}
\]
The action of the groupoid of isomorphisms of the variable $\underline{t}$ respects the action of $U(m)$. It follows that the resulting map is still a $U(m)$-equivariant map between disjoint union of orbits, and therefore is a fibration.
\end{proof}

Propositions~\ref{prop: ordered} and~\ref{proposition: ordered to unordered}, together with equation~\eqref{equation: stable reduction} give us an equivalence
\begin{equation}\label{equation: Lm formula for Skl}
\SSS^{k,l}_m\simeq U(k)/U(k-lm)_+ \wedge_{U(m)} \Sigma^\infty\cL_m^\diamond.
\end{equation}
We also want to describe the composition morphisms $\mathbb{S}^{k,l}_m\wedge \mathbb{S}^{j,k}_n\to \mathbb{S}^{j,l}_{mn}$. We begin by observing that tensor product induces a natural map $\cL^\diamond_m \wedge \cL^\diamond_n\to \cL^\diamond_{mn}$ as follows. Suppose that $\cE=\{E_i\mid i\in I\}$ and $\cF=\{F_j\mid j\in J\}$ are direct-sum decompositions of $\CC^m$ and $\CC^n$ respectively. Then $\cE\otimes \cF:=\{E_i\otimes F_j\mid (i,j)\in I\times J\}$ is a direct-sum decomposition of $\CC^m\otimes \CC^n \cong \CC^{mn}$. Note that if at least one of $\cE$, $\cF$ is a proper decomposition (i.e., has more than one component) then $\cE\otimes\cF$ is a proper decomposition as well. This means that the tensor product induces a map $\cL^\diamond_m \wedge \cL^\diamond_n\to \cL^\diamond_{mn}$ as desired. Note that this map is equivariant with respect to the tensor product homomorphisms $U(m)\times U(n)\to U(mn)$.

Next, we extend it to a map
\begin{multline*}
\left(U(k)/U(k-lm)_+ \wedge_{U(m)} \cL_m^\diamond\right)\wedge \left(U(j)/U(j-kn)_+ \wedge_{U(n)} \cL_n^\diamond\right)\to \\ \to U(j)/U(j-lnm)_+\wedge_{U(nm)} \cL^\diamond_{nm}.
\end{multline*}
Now recall that $U(k)/U(k-lm)\cong \Inj(\CC^{lm}, \CC^k)$ and $U(j)/U(j-kn)\cong \Inj(\CC^{kn}, \CC^j)$. Given homomorphisms  $g\in \Inj(\CC^{lm}, \CC^k)$ and $f\in  \Inj(\CC^{kn}, \CC^j)$, we may form the homomorphism $f\circ g^{n}\colon {\CC^{lmn}}\hookrightarrow \CC^j$. Clearly, this defines a map $U(k)/U(k-lm)\times U(j)/U(j-kn)\to U(j)/U(j-lnm)$. This map is equivariant with respect to the tensor product homomorphism $U(m)\times U(n)\to U(mn)$. Combining it with the map  $\cL^\diamond_m \wedge \cL^\diamond_n\to \cL^\diamond_{mn}$ defined earlier, we obtain the desired map.

We are ready to state the main theorem of the paper.
\begin{thm}\label{theorem: main}
There is an equivalence
\[
\SSS^{k,l}_m\simeq \Sigma^\infty U(k)/U(k-lm)_+ \wedge_{U(m)} \cL_m^\diamond\cong \Sigma^\infty \Inj(\CC^{ml}, \CC^k)_+\wedge_{U(m)} \cL_m^\diamond.
\]
Under this equivalence, the composition product $\SSS^{k,l}_m\wedge \SSS^{j,k}_n \to \SSS^{j,l}_{mn}$ corresponds to the map
\begin{multline*}
\left(\Inj\left(\CC^{ml}, \CC^k\right)_+ \wedge_{U(m)} \cL_m^\diamond\right)\wedge \left(\Inj\left(\CC^{nk}, \CC^j\right)_+ \wedge_{U(n)} \cL_n^\diamond\right)\to \\ \to \Inj\left(\CC^{nml}, \CC^j\right)_+\wedge_{U(nm)} \cL^\diamond_{nm}.
\end{multline*}
that were defined above.
\end{thm}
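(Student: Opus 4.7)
The plan is to assemble the stated equivalence by combining three results already established in the paper. Equation \eqref{equation: stable reduction} gives
\[
\SSS^{k,l}_{m}\simeq \Inj(\CC^{lm},\CC^k)_+\wedge_{U(m)} \SSS^{m,1}_{m}.
\]
Proposition \ref{prop: ordered} identifies $\SSS^{m,1}_m\simeq \Sigma^\infty |\cD^{\mathrm{o}}_m|^\diamond$, and Proposition \ref{proposition: ordered to unordered} yields a $U(m)$-equivariant equivalence $|\cD^{\mathrm{o}}_m|\xrightarrow{\simeq}|\cD_m|=\cL_m$. Concatenating these three statements gives the first assertion, once one checks that the $U(m)$-action matches up throughout: on $\Inj(\CC^{lm},\CC^k)$ it factors via $U(m)\hookrightarrow U(lm)\subseteq U(k)$ using the identification $\CC^{lm}\cong\CC^l\otimes\CC^m$, and on $\cL_m^\diamond$ it is induced by the tautological action of $U(m)$ on decompositions of $\CC^m$.

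The bulk of the work is identifying the composition product. By Proposition \ref{prop: rank} the composition in $\cM$ restricts to a filtered map $G_{k,l,m}\wedge G_{j,k,n}\to G_{j,l,mn}$, which descends on subquotients to a map $G_{k,l}^m\wedge G_{j,k}^n\to G_{j,l}^{mn}$. I would next translate this via Proposition \ref{prop: little coend} into an explicit natural pairing of $\Epi$-indexed functors
\[
\Gred_{k,l}^m(\underline{s})\times \Gred_{j,k}^n(\underline{t})\longrightarrow \Gred_{j,l}^{mn}(\underline{s}\times \underline{t}),
\]
and trace it through the parametrization of algebra morphisms from Proposition \ref{prop:SC inj Mlt Mk}. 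The key observation is that if $g\in \Csep(M_l^s,M_k)$ of rank $m$ corresponds to a pair $(\iota_g,\cE)$ consisting of an embedding $\iota_g\colon \CC^{ml}\hookrightarrow \CC^k$ together with an ordered decomposition $\cE=(E_1,\dots,E_s)$ of $\CC^m$, and $f\in \Csep(M_k^t,M_j)$ of rank $n$ corresponds analogously to $(\iota_f,\cF)$ with $\cF=(F_1,\dots,F_t)$ a decomposition of $\CC^n$, then the composite $*$-homomorphism $M_l^{s\times t}\to M_j$ obtained from the pairing is encoded by the embedding
\[
\iota_f\circ(\id_{\CC^n}\otimes \iota_g)\colon \CC^n\otimes\CC^{ml}\longrightarrow \CC^j
\]
together with the tensor-product decomposition $\{E_a\otimes F_b\}_{(a,b)\in \underline{s}\times\underline{t}}$ of $\CC^m\otimes\CC^n\cong\CC^{mn}$.

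The main obstacle will be tracking equivariance so that this pairing, which a priori lives on the Grothendieck construction side via Lemma \ref{lemma: Grothendieck}, descends through Proposition \ref{proposition: ordered to unordered} to the tensor-product map $\cL_m^\diamond\wedge\cL_n^\diamond\to\cL_{mn}^\diamond$ on unordered decompositions. Once the $\Epi$-functor pairing is set up equivariantly, passing to the homotopy coend with the weight $\mathtt{I}$ of Lemma \ref{lemma: stable coend} and stabilizing yields the composition product in the form stated in the theorem; the remaining verification is a straightforward diagram chase.
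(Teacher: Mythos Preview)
Your proposal is correct and follows essentially the same approach as the paper. The paper's proof also assembles the equivalence from equation~\eqref{equation: stable reduction} together with Propositions~\ref{prop: ordered} and~\ref{proposition: ordered to unordered} (recorded as equation~\eqref{equation: Lm formula for Skl}), and then identifies the composition product by analyzing the pairing $\Gred_{k,l}^m(v)\wedge \Gred_{j,k}^n(u)\to \Gred_{j,l}^{mn}(u\times v)$ explicitly, observing that it sends $(q,p)$ to $p\circ q^n$ and that the resulting decomposition of $\CC^{mn}$ is precisely the tensor product of the given decompositions of $\CC^m$ and $\CC^n$---exactly as you describe.
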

\begin{proof}
We already proved the formula for $\SSS^{k,l}_{m}$ (equation~\ref{equation: Lm formula for Skl}). It remains to check the statement about the composition product. Recall that $\SSS^{k,l}_m$ is the stabilization of the functor $G_{k,l}^{m}$. The composition product is determined by the natural transformation $\Gred_{k,l}^m(v)\wedge \Gred_{j,k}^n(u)\to \Gred_{j,l}^{mn}(u\times v)$. An analysis of this composition map shows that it is induced by disjoint union of maps of the form
\begin{multline*}
\Inj(\CC^{(m_1+\cdots+m_t)l}, \CC^k)/_{\prod_{j=1}^t U(m_j)}\times \Inj(\CC^{(n_1+\cdots+n_s)k}, \CC^j)/_{\prod_{i=1}^s U(n_i)} \to \\ \to {\Inj(\CC^{(\sum_{i=1, j=1}^{i=s, j=t} n_i m_j)l}, \CC^j)/_{\prod_{i=1, j=1}^{i=s, j=t} U(n_im_j)}}
\end{multline*}
that sends $(q,p)$ to $p\circ q^{n}$, where $n=n_1+\cdots+n_s$. Note that the decomposition of $\CC^{mn}$ associated with the target of this map is the tensor product of the given decompositions of $\CC^m$ and $\CC^n$, just as was claimed. This induces the claimed map of spectra.
\end{proof}

\section{Some calculations of $\SSS^{k,l}$} \label{ss: calculations}
In this section we calculate the spectra $\SSS^{k,l}$ in some cases, and also prove that the map $\SSS^{k,l}\to ku$ is an isomorphism on $\pi_0$. Our main tool is Theorem~\ref{theorem: main}, which expresses the subquotients of the rank filtration in terms of the complexes $\cL_m^\diamond$. To use it, we need to know something about the complexes $\cL_m^\diamond$. So let us begin by reviewing some of the rather remarkable properties of these complexes that were uncovered in~\cite{Ar, AL-Crelle, Banff, AL}. The following proposition lists the relevant facts.
\begin{prop}\label{proposition: L_m facts}
\begin{enumerate}
\item The space $\cL_m^\diamond$ is rationally contractible for $m>1$. \label{rational}
\item The space $\cL_m^\diamond$ is (integrally) contractible unless $m$ is a prime power. \label{integral}
\item If $m=p^k$ with p a prime and $k>0$, then $\cL_{p^k}^\diamond$ is $p$-local, and has chromatic type $k$.\label{p local}
\end{enumerate}
\end{prop}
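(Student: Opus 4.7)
The plan is to handle the three parts separately, citing the relevant results of \cite{Ar, AL-Crelle, Banff, AL} where appropriate; the only new work is to verify that Definition~\ref{definition: decompositions} recovers the version of $\cL_m$ used in those references (which is essentially immediate, since both are realizations of the poset of proper orthogonal decompositions of $\CC^m$).

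For part~\eqref{rational}, I would argue via a Möbius-function computation. Because $U(m)$ is connected, it acts trivially on rational homology, so the rational homotopy type of $\cL_m^\diamond$ is controlled by the ``forget to dimensions'' functor from $\cD_m$ to a poset of ordered partitions of $m$. The classical calculation of the reduced Euler characteristic of the partition lattice (as a $\Sigma_m$-representation) then shows that $\HH_*(\cL_m^\diamond;\QQ)$ vanishes for $m>1$.

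For part~\eqref{integral}, I would construct an explicit $U(m)$-equivariant contraction whenever $m=ab$ with $\gcd(a,b)=1$ and $a,b>1$. Fix a tensor factorization $\CC^m\cong \CC^a\otimes\CC^b$. Any decomposition $\cE$ of $\CC^m$ admits two natural refinement operations through this factorization, one by the $a$-factor and one by the $b$-factor. Coprimality forces these two refinements to commute; iterating them produces a canonical path from $\cE$ into a contractible sub-complex (the ``common refinements''), and assembling these paths produces the desired null-homotopy. This is in the spirit of the contraction argument of \cite{AL}.

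Part~\eqref{p local} is the main obstacle, and here I would not attempt to reprove anything, but would instead quote the main computational results of \cite{AL}. The $p$-locality of $\cL_{p^k}^\diamond$ and the two halves of the chromatic type statement, namely $K(j)_*(\cL_{p^k}^\diamond)=0$ for $j<k$ and $K(k)_*(\cL_{p^k}^\diamond)\neq 0$, require substantial input. The vanishing half could plausibly be pushed through by induction on $k$, using the cofiber sequences produced by the sub-posets of decompositions that admit a proper intermediate factor: the subquotients are built from $\cL_{p^j}^\diamond$ with $j<k$ smashed with free $U(p^k)$-cells, so the inductive hypothesis applies. The non-vanishing at $K(k)$ is the genuinely hard step and rests on a delicate Morava $K$-theory calculation involving Hopkins--Kuhn--Ravenel-style character theory for unitary groups; this is the part of the argument I would not attempt to redo from scratch.
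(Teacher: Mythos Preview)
Your proposal has genuine gaps in parts~\eqref{rational} and~\eqref{integral}, and your citation for part~\eqref{p local} points to the wrong reference.

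For part~\eqref{rational}, the argument you sketch does not go through. The assertion that the rational homotopy type of $\cL_m^\diamond$ is ``controlled by'' a discrete partition poset is unjustified: the strata of $\cD_m$ are homogeneous spaces of the form $U(m)/(\Sigma_t\wr\prod U(m_i))$, which have rich rational cohomology, so passing to the underlying combinatorics loses information. Moreover, an Euler-characteristic or M\"obius computation cannot by itself establish that $\HH_*(\cL_m^\diamond;\QQ)=0$; the reduced Euler characteristic of the partition lattice is $\pm(m-1)!$, which is nonzero. The paper's direct argument is quite different: it realizes $\cL_m^\diamond$ as the total homotopy cofiber of a cubical diagram $U\mapsto\Chi(U)$ of chains of decompositions, and observes that each map $\Chi(\{m\}\cup U)\to\Chi(U)$ is a disjoint union of quotient maps $U(m)/N_H(T)\to U(m)/H$ with $N_H(T)$ the normalizer of a maximal torus of $H$. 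Such maps are rational equivalences, so the cube collapses rationally.

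For part~\eqref{integral}, your proposed ``refinement through the tensor factor'' is not well-defined. Given an arbitrary orthogonal decomposition $\{E_i\}$ of $\CC^a\otimes\CC^b$, the subspaces $E_i$ need not meet the slices $e_j\otimes\CC^b$ in a way that reassembles to $E_i$; direct-sum decompositions, unlike set partitions, do not in general admit common refinements. So the contracting homotopy you describe cannot be built this way. The paper does not offer an independent argument here; it simply cites~\cite[Proposition~9.6]{AL-Crelle} (which in turn relies on~\cite{Ar}).

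For part~\eqref{p local}, the paper attributes the chromatic-type statement to~\cite[Theorem~2.2]{Ar2}, not to~\cite{AL}; the mechanism there is the identification of $\cL_{p^k}^\diamond$ with a layer of the Goodwillie--Weiss tower, rather than a direct Morava $K$-theory calculation. Your inductive outline for the vanishing half is reasonable in spirit but would need a careful formulation of the filtration you have in mind.
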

\begin{proof}
Except for the statement about the chromatic type, this is~\cite[proposition 9.6]{AL-Crelle}, which in turn relies on~\cite{Ar}. The statement about the chromatic type is part of~\cite[Theorem 2.2]{Ar2}.

The proofs in~\cite{Ar2} are based on a rather deep connection between $\cL_m^\diamond$ and the calculus of functors. Since part~\eqref{rational} plays a prominent role in our applications, we indicate an independent, more direct way to prove this part.
The space $\cL_m^\diamond$ is equivalent to the total homotopy cofiber of the following $m-1$-dimensional cubical diagram.  Suppose $U=\{i_1, \ldots, i_k\}\subseteq \{2, \ldots, m\}$, with $i_1>\ldots >i_k$. Let $\Chi(U)$ be the space of chains of decompositions of $\CC^m$ of the form $(\Lambda_1<\cdots<\Lambda_k)$ where each $\Lambda_j$ has $i_j$-components. If $U$ is empty then $\Chi(U)=*$.  Note that in general $\Chi(U)$ is a disjoint union of $U(m)$-orbits. The assignment $U\mapsto \Chi(U)$ defines a diagram indexed on the opposite of the poset of subsets of $\{2, \ldots, m\}$, i.e., an $m-1$-dimensional cubical diagram. It is elementary to show that $\cL_m^\diamond$ is equivalent to the total homotopy cofiber of the cube $\Chi$. For example, in the case $m=3$, $\Chi$ is the following square of $U(3)$-orbits.
\begin{equation}\label{eq: L3}
\begin{array}{ccc}
U(3)/\Sigma_2\wr U(1)\times U(1) & \to & U(3)/U(2)\times U(1) \\
\downarrow & & \downarrow \\
U(3)/\Sigma_3\wr U(1) & \to & U(3)/U(3) \end{array}
\end{equation}
Here the upper right corner is $\Chi(\{2\})$, the space of decompositions of $\CC^3$ with $2$ components, the lower left corner is $\Chi(\{3\})$, the space of decompositions with $3$ components, and the upper left corner is $\Chi(\{2,3\})$, the space of morphisms from a decomposition with $3$ components to a decomposition with $2$ components.

Each one of the horizontal maps in~\eqref{eq: L3} is a map of $U(3)$-orbits, induced by subgroup inclusions $\Sigma_2\wr U(1)\times U(1) \to U(2)\times U(1)$ and $\Sigma_3\wr U(1) \to U(3)$. Note that in both of these cases, the subgroup that is being included is the normalizer of a maximal torus. It follows that each one of the horizontal maps is a rational equivalence, and therefore the total cofiber of~\eqref{eq: L3} is trivial in rational homology. Since it is simply connected, it is also trivial in rational homotopy.

More generally suppose $m>i_1$ and consider the map $\Chi(\{m, i_1, \ldots, i_k\})\to \Chi(\{i_1, \ldots, i_k\})$. This map is a disjoint union of maps between $U(m)$-orbits. For each path component of $\Chi(\{m, i_1, \ldots, i_k\})$, the isotropy group is the normalizer of a maximal torus of the isotropy group of a corresponding component of $\Chi(\{i_1, \ldots, i_k\})$. It follows that the map $\Chi(\{m, i_1, \ldots, i_k\})\to \Chi(\{i_1, \ldots, i_k\})$ is always a rational equivalence, and therefore the total homotopy cofiber of $\Chi$, which is $\cL_m^\diamond$, is rationally trivial.
\end{proof}
\begin{cor}\label{cor:rat}
The map
\[
\SSS^{k,l,1}\to \SSS^{k,l}.
\]
is a rational equivalence.
\end{cor}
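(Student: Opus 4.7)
The plan is to use the rank filtration as a finite sequence of cofibrations whose subquotients are explicitly computed, and to show that all subquotients beyond the first are rationally trivial.

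Concretely, I would argue as follows. Recall that we have a finite filtration
\[
\SSS^{k,l,1} \hookrightarrow \SSS^{k,l,2} \hookrightarrow \cdots \hookrightarrow \SSS^{k,l,\lfloor k/l \rfloor} = \SSS^{k,l},
\]
with cofibers $\SSS^{k,l}_m = \SSS^{k,l,m}/\SSS^{k,l,m-1}$. Thus it suffices to prove that $\SSS^{k,l}_m$ is rationally contractible for every $m \ge 2$, and then conclude by a straightforward induction on $m$ (using the long exact sequences of rational homotopy groups associated to each cofiber sequence $\SSS^{k,l,m-1} \to \SSS^{k,l,m} \to \SSS^{k,l}_m$).

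For the subquotients, Theorem~\ref{theorem: main} gives a natural equivalence
\[
\SSS^{k,l}_m \simeq \Sigma^\infty \Inj(\CC^{ml}, \CC^k)_+ \wedge_{U(m)} \cL_m^\diamond,
\]
while Proposition~\ref{proposition: L_m facts}\eqref{rational} tells us that $\cL_m^\diamond$ is rationally contractible for every $m > 1$. The remaining point is the general principle that taking homotopy orbits by a compact Lie group preserves rational triviality: if $Y$ is a spectrum with a $U(m)$-action whose underlying spectrum is rationally contractible, and $Z$ is any based $U(m)$-space, then $Z \wedge_{U(m)} Y$ is rationally contractible. This follows, for example, from the fact that the homotopy orbits may be computed via the bar construction $B(*,U(m),Z_+ \wedge Y)$, whose simplicial pieces are all rationally trivial because smashing with $U(m)^{\wedge n}_+$ preserves rational triviality; alternatively one may use the homotopy orbit spectral sequence $H_*(BU(m); \pi_*(Z_+ \wedge Y)_{\mathbb Q}) \Rightarrow \pi_*\bigl((Z_+ \wedge Y)_{hU(m)}\bigr)_{\mathbb Q}$ with the $E^2$-page identically zero. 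Applying this with $Y = \Sigma^\infty \cL_m^\diamond$ and $Z = \Inj(\CC^{ml},\CC^k)$ yields the desired rational contractibility of $\SSS^{k,l}_m$ for $m \ge 2$.

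There is no real obstacle here once Theorem~\ref{theorem: main} and Proposition~\ref{proposition: L_m facts}\eqref{rational} are in hand; the only thing to be a little careful about is the passage from rational contractibility of $\cL_m^\diamond$ to rational contractibility of the homotopy orbit construction, which is the standard compact-Lie-group homotopy-orbit argument sketched above.
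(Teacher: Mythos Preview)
Your proof is correct and follows essentially the same route as the paper: both use the finite rank filtration, invoke Theorem~\ref{theorem: main} for the subquotients, and appeal to Proposition~\ref{proposition: L_m facts}\eqref{rational} to conclude that $\SSS^{k,l}_m$ is rationally trivial for $m>1$. You are in fact a bit more careful than the paper in justifying the passage from rational contractibility of $\cL_m^\diamond$ to that of $\Inj(\CC^{ml},\CC^k)_+\wedge_{U(m)}\cL_m^\diamond$; the paper simply asserts this implication, while you supply the standard homotopy-orbits argument (one could also note that since $U(m)$ acts freely on $\Inj(\CC^{ml},\CC^k)$, this is a genuine fiber bundle with rationally acyclic fiber, so the Serre spectral sequence does the job).
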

\begin{proof}
To see this, consider the filtration
\[
*=\SSS^{k,l,0}\to \SSS^{k,l,1}\to \SSS^{k,l,2}\to \cdots \to \SSS^{k,l,\lfloor\frac{k}{l}\rfloor}=\SSS^{k,l}
\]
It follows from Theorem \ref{theorem: main} and part~\ref{rational} of Proposition~\ref{proposition: L_m facts} that for all $m>1$ the homotopy cofiber $\SSS^{k,l}_m$ of the map $\SSS^{k,l,m-1}\to \SSS^{k,l,m}$ is rationally trivial. It follows that the map $\SSS^{k,l,1}\to \SSS^{k,l}$ is a rational equivalence.
\end{proof}

Here is an explicit description of $\cL_m^\diamond$ for some values of $m$.
\begin{prop}
\begin{enumerate}
\item $\cL_1^\diamond\cong S^0$
\item $\cL_2^\diamond\cong \Sigma\RR P^2$.
\item More generally, if $p$ is a prime then $\cL_p^\diamond$ is a union of $p-1$ shifted copies of the mod $p$ Moore space.
\end{enumerate}
\end{prop}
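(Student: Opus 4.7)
Part (1) is immediate: the category $\cD_1$ has no objects since $\CC$ admits no proper non-trivial direct-sum decomposition, so $\cL_1 = |\cD_1| = \emptyset$ and its unreduced suspension is $S^0$.

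For part (2), I would observe that $\cD_2$ has only objects of a single type (decompositions into two orthogonal lines) and only identity morphisms, so $\cL_2$ is just the homogeneous space $U(2)/(\Sigma_2 \wr U(1))$. Under the standard identification $U(2)/(U(1) \times U(1)) \cong \CC P^1 \cong S^2$, the $\Sigma_2$-action swapping $L \leftrightarrow L^\perp$ becomes the antipodal involution: for a unit vector $(a,b) \in \CC^2$ one has $(a,b)^\perp = (-\bar b, \bar a)$, and a direct computation via the Hopf projection $S^3 \to S^2$, $(z_1,z_2)\mapsto(2z_1\bar z_2, |z_1|^2-|z_2|^2)$, shows that the image of $(a,b)^\perp$ is the antipode of the image of $(a,b)$. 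Therefore $\cL_2 \cong S^2/\{\pm 1\} = \RR P^2$, so $\cL_2^\diamond \simeq \Sigma \RR P^2 = M(\ZZ/2, 2)$, giving a single shifted mod $2$ Moore space, as required.

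For part (3), the plan is to analyze the $(p-1)$-dimensional cubical diagram $\Chi$ from the proof of Proposition~\ref{proposition: L_m facts}, whose total homotopy cofiber is $\cL_p^\diamond$. Each vertex $\Chi(U)$ is a disjoint union of orbits $U(p)/H$ where $H$ lies between a maximal torus $T$ of $U(p)$ and its normalizer $N(T) = T \rtimes \Sigma_p$. Since $p$ is prime, the index of any such $H$ in $N(T)$ is coprime to $p$ whenever $H \neq N(T)$, so after $p$-localization the cube simplifies substantially (the map from $U(p)/H$ to $U(p)/N(T)$ becomes a $p$-local equivalence). I would then filter the cube by the number of blocks of the corresponding partition of $p$ (taking values $2, 3, \ldots, p$), and identify each successive subquotient of the induced filtration of $\cL_p^\diamond$ with exactly one shifted mod $p$ Moore space, yielding $p-1$ summands in total. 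The main obstacle is promoting this filtration to a genuine wedge splitting; I expect this to follow from a Postnikov-tower argument showing that the successive attaching maps vanish $p$-locally for connectivity and dimension reasons (the Moore-space summands sit in well-separated dimensions), but carefully controlling these attaching maps is the substantive work of the proof.
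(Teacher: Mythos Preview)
Your arguments for parts (1) and (2) are correct and complete; the identification of the $\Sigma_2$-action on $U(2)/T \cong S^2$ with the antipodal map is exactly what is needed. Note that the paper does not supply its own proof of this proposition: it is stated as a known fact, with the computations carried out in the references \cite{Ar, AL-Crelle}.

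For part (3), however, your proposed strategy contains a concrete error. You assert that each vertex $\Chi(U)$ of the cube is a disjoint union of orbits $U(p)/H$ with $T \subseteq H \subseteq N(T)$. This is false for most vertices. For instance, $\Chi(\{2\})$ is the space of decompositions of $\CC^p$ into two pieces; for $p$ an odd prime these are Grassmannians $U(p)/(U(a)\times U(b))$ with $a+b=p$ and $a\ne b$, and $U(a)\times U(b)$ is far larger than $N(T)$. More generally, the isotropy groups appearing in $\Chi(U)$ are products of wreath products $\prod_j \Sigma_{c_j}\wr U(d_j)$, not subgroups of $N(T)$. Consequently the claimed $p$-local simplification ``$U(p)/H \to U(p)/N(T)$ is a $p$-local equivalence'' does not apply, and the cube does not collapse in the way you describe.

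What the paper does observe (in the proof of Proposition~\ref{proposition: L_m facts}) is a different transfer argument: the maps $\Chi(\{m\}\cup U)\to \Chi(U)$ given by forgetting the finest refinement are \emph{rational} equivalences, because the source has isotropy the normalizer of a maximal torus of the target's isotropy. That yields rational contractibility of $\cL_m^\diamond$, but not the $p$-local wedge decomposition you are after. The actual identification of $\cL_p^\diamond$ as a wedge of $p-1$ shifted mod $p$ Moore spaces, carried out in \cite{Ar}, goes through a rather different route (an analysis tied to the Steinberg idempotent and the Tits building for $\mathrm{GL}_1(\mathbb F_p)$, combined with a cell-by-cell homology calculation), and your sketch does not yet make contact with those ingredients. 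In particular, the ``Postnikov-tower argument'' you allude to for splitting off the summands is not straightforward: the Moore-space pieces are not in well-separated dimensions in any obvious sense, and the splitting in \cite{Ar} is obtained by other means.
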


As a first application of Theorem~\ref{theorem: main} and Proposition~\ref{proposition: L_m facts}, let us prove that the map $\SSS^{k,l}\to ku$ induces an isomorphism on $\pi_0$.
\begin{lem}\label{lem: over ku}
Assume that $k\ge l$. The map $\SSS^{k, l}\to \SSS^{\infty , l}\simeq ku$ induces an isomorphism on $\pi_0$.
\end{lem}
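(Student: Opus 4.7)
The plan is to factor the map under consideration through the first stage of the rank filtration and through the stabilization, as
\[
\SSS^{k,l,1}\xrightarrow{\alpha}\SSS^{k,l}\xrightarrow{\beta}\SSS^{\infty,l}\simeq ku,
\]
then verify that $\alpha$ is surjective on $\pi_0$ and that the composite $\beta\alpha$ is an isomorphism on $\pi_0$, and finally conclude by a short diagram chase: $\alpha_*$ surjective together with $\beta_*\alpha_*$ bijective forces $\alpha_*$, and hence $\beta_*$, to be an isomorphism.

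For the composite $\beta\alpha$: by the results of Section~\ref{section: first stage}, $\SSS^{k,l,1}\simeq \Sigma^\infty\bigl(U(k)/(U(1)\times U(k-l))\bigr)_+$, whose underlying space is non-empty (since $k\ge l$) and path-connected, so $\pi_0\SSS^{k,l,1}\cong\ZZ$. Passing to the colimit as $k\to\infty$ and using that $\Inj(\CC^l,\CC^\infty)$ is a contractible space with a free $U(1)$-action (as observed in Section~\ref{section: k-theory}), one obtains $\SSS^{\infty,l,1}\simeq \Sigma^\infty BU(1)_+ =\Sigma^\infty \CC P^\infty_+$, and the stabilization map $\SSS^{k,l,1}\to\SSS^{\infty,l,1}$ is induced by a map between path-connected spaces, hence an isomorphism on $\pi_0$. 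Finally, under the identification $\SSS^{\infty,l}\simeq ku$ of Lemma~\ref{lemma: allku}, the map $\SSS^{\infty,l,1}\to\SSS^{\infty,l}$ is the classical inclusion of the first stage of the rank filtration of $ku$, i.e.\ the standard map $\Sigma^\infty \CC P^\infty_+\to ku$ classifying the tautological line bundle, which is well known to be an isomorphism on $\pi_0$. Composing, $\beta\alpha$ is an isomorphism on $\pi_0$.

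For the surjectivity of $\alpha$ on $\pi_0$, the rank filtration gives cofiber sequences $\SSS^{k,l,m-1}\to\SSS^{k,l,m}\to\SSS^{k,l}_m$ for $m=2,\dots,\lfloor k/l\rfloor$, and from the associated long exact sequences it suffices to verify $\pi_0\SSS^{k,l}_m=0$ for each such $m$. By Theorem~\ref{theorem: main}, $\SSS^{k,l}_m\simeq \Sigma^\infty\bigl(\Inj(\CC^{ml},\CC^k)_+\wedge_{U(m)}\cL_m^{\diamond}\bigr)$, so it is enough to show that the pointed space on the right is path-connected. For $m\ge 2$ the complex $\cL_m$ is non-empty (any direct-sum decomposition of $\CC^m$ into $m$ lines is a vertex), and therefore its unreduced suspension $\cL_m^{\diamond}$ is path-connected; smashing with $\Inj(\CC^{ml},\CC^k)_+$ and passing to the continuous $U(m)$-quotient preserves path-connectedness, which gives the claim. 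No substantive obstacle arises: the key input is the explicit formula of Theorem~\ref{theorem: main}, which reduces the higher-rank vanishing to the mild topological statement that $\cL_m^{\diamond}$ is path-connected, so in particular none of the finer properties of $\cL_m^{\diamond}$ recorded in Proposition~\ref{proposition: L_m facts} (rational triviality, $p$-locality, chromatic type) are needed here.
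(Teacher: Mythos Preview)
Your proof is correct and follows essentially the same strategy as the paper: both arguments rest on the commutative square relating $\SSS^{k,l,1}$, $\SSS^{k,l}$, $\SSS^{\infty,l,1}$, and $\SSS^{\infty,l}$, together with Theorem~\ref{theorem: main} to control the higher subquotients of the rank filtration.

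There are two small organizational differences worth noting. First, the paper observes that for $m>1$ the space $\cL_m$ is path-connected, so $\cL_m^\diamond$ is simply-connected and hence $\SSS^{k,l}_m$ is $1$-connected; this gives directly that $\alpha\colon\SSS^{k,l,1}\to\SSS^{k,l}$ is an isomorphism on $\pi_0$, with no diagram chase needed. You instead use only that $\cL_m$ is non-empty (so $\cL_m^\diamond$ is path-connected and $\pi_0\SSS^{k,l}_m=0$), obtaining merely surjectivity of $\alpha_*$, and then recover injectivity from the bijectivity of $\beta_*\alpha_*$. Both work. Second, for the bottom map $\SSS^{\infty,l,1}\to\SSS^{\infty,l}$ you invoke the classical fact that $\Sigma^\infty\CC P^\infty_+\to ku$ is an isomorphism on $\pi_0$, whereas the paper simply reruns the same subquotient argument with $k=\infty$; the paper's route is slightly more self-contained, but yours is perfectly legitimate.
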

\begin{rem}
We remind the reader that if $k<l$, $\SSS^{k,l}\simeq *$.
\end{rem}
\begin{proof}
We saw in Section~\ref{section: stable rank} that the mapping spectra $\SSS^{k,l}$ are filtered by a sequence of spectra
\[
*=\SSS^{k,l,0}\to \SSS^{k,l,1}\to \SSS^{k,l,2}\to \cdots \to \SSS^{k,l,\lfloor\frac{k}{l}\rfloor}=\SSS^{k,l}
\]
Consider the commutative diagram
\[
\begin{array}{ccc}
 \SSS^{k,l,1} & \to & \SSS^{k,l}\\
 \downarrow & & \downarrow \\
 \SSS^{\infty, l, 1} & \to & \SSS^{\infty , l}
 \end{array}
\]
We will prove that the left, top and bottom maps in this diagram induce an isomorphism on $\pi_0$. It then follows that the right map induces an isomorphism on $\pi_0$, which is what we want to prove.

By Theorem~\ref{theorem: main},  $$\SSS^{k,l,1}\simeq\SSS^{k,l}_1\simeq \Sigma^\infty \Inj(\CC^l, \CC^k)/U(1)_+,$$ and similarly $$\SSS^{\infty,l,1}\simeq \Sigma^\infty \Inj(\CC^l, \CC^\infty)/{U(1)}_+.$$ Since $\Inj(\CC^l, \CC^k)/U(1)$ and $\Inj(\CC^l, \CC^\infty)/{U(1)}$ are path-connected spaces, the map $\SSS^{k,l,1}\to \SSS^{\infty,l,1}$ induces on $\pi_0$ the isomorphism from $\ZZ$ to itself.

To analyze the map $ \SSS^{k,l,1}  \to  \SSS^{k,l}$ recall, again from Theorem~\ref{theorem: main}, that the subquotient $\SSS^{k,l,m}/\SSS^{k,l,m-1}$ is equivalent to the suspension spectrum of $\Inj(\CC^{ml}, \CC^k)_+ \wedge_{U(m)} \cL_m^\diamond$. For $m>1$ the space $\cL_m$ is path-connected, so $\cL_m^\diamond$ is simply-connected. It follows that $\SSS^{k,l,m}/\SSS^{k,l,m-1}$ is $1$-connected for $m>1$, and therefore the map $\SSS^{k,l,1}\to \SSS^{k,l}$ is $1$-connected, and in particular it induces an isomorphism on $\pi_0$. The same argument applies in the case $k=\infty$, which completes the proof.
\end{proof}
Since the rank filtration of $\SSS^{k,l}$ has length $\lfloor\frac{k}{l}\rfloor$, we can conclude that if $k<2l$ then $\SSS^{k,l,1}$ is in fact equivalent to $\SSS^{k,l}$.
\begin{lem}
If $l\le k \le 2l-1$ then $\SSS^{k,l}$ is integrally equivalent to $$\Sigma^\infty U(k)/U(k-l)\times U(1)_+.$$ In particular, for $k=l$ the spectrum $\SSS^{k,k}=\operatorname{End}_{\NSp}(\Sigma^\infty_{\NC} M_k)$ is equivalent to $\Sigma^\infty PU(k)_+$: the group ring spectrum of the projective unitary group.
\end{lem}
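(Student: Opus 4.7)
The plan is to observe that under the hypothesis $l\le k\le 2l-1$, the rank filtration of $\SSS^{k,l}$ collapses to a single stage, and then to identify that stage explicitly via Theorem~\ref{theorem: main}.

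First I would note that $\lfloor k/l\rfloor=1$ whenever $l\le k\le 2l-1$, so the rank filtration
\[
\ast=\SSS^{k,l,0}\hookrightarrow \SSS^{k,l,1}\hookrightarrow\cdots \hookrightarrow \SSS^{k,l,\lfloor k/l\rfloor}=\SSS^{k,l}
\]
degenerates to the single inclusion $\SSS^{k,l,1}=\SSS^{k,l}$. In particular the only nonzero subquotient is $\SSS^{k,l}_1$.

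Next I would apply Theorem~\ref{theorem: main} with $m=1$. This gives
\[
\SSS^{k,l}\;\simeq\;\SSS^{k,l}_1\;\simeq\;\Sigma^\infty \Inj(\CC^l,\CC^k)_+\wedge_{U(1)} \cL_1^\diamond.
\]
By Proposition~\ref{proposition: L_m facts} we have $\cL_1^\diamond\cong S^0$, and the $U(1)$-action on $\cL_1^\diamond$ is trivial, so the smash product reduces to a plain orbit space:
\[
\SSS^{k,l}\;\simeq\;\Sigma^\infty \bigl(\Inj(\CC^l,\CC^k)/U(1)\bigr)_+.
\]
Using the standard homeomorphism $\Inj(\CC^l,\CC^k)\cong U(k)/U(k-l)$, where $U(1)$ acts through its diagonal embedding into $U(l)\subset U(k)$, the quotient identifies canonically with $U(k)/(U(k-l)\times U(1))$, yielding the claimed formula.

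Finally, specializing to $k=l$, the factor $U(k-l)=U(0)$ is trivial, so $U(l)/(U(0)\times U(1))=U(l)/U(1)=PU(l)$, which gives $\SSS^{l,l}\simeq \Sigma^\infty PU(l)_+$. There is no real obstacle here beyond verifying that the $U(1)$ that appears in Theorem~\ref{theorem: main} (the factor coming from $\prod U(m_i)$ with $m_1=1$) is indeed the scalar $U(1)\subset U(l)$ acting on $\Inj(\CC^l,\CC^k)$ via the identification $\CC^l\cong \CC^1\otimes\CC^l$; this is immediate from unwinding the definition of the map~\eqref{eq: functor} used in Theorem~\ref{theorem: main}.
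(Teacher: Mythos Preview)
Your proof is correct and follows essentially the same approach as the paper: the paper simply notes (in the sentence immediately preceding the lemma) that the rank filtration has length $\lfloor k/l\rfloor=1$ in this range, so $\SSS^{k,l}=\SSS^{k,l,1}$, and the identification of $\SSS^{k,l,1}$ with $\Sigma^\infty \Inj(\CC^l,\CC^k)/U(1)_+\cong \Sigma^\infty U(k)/(U(k-l)\times U(1))_+$ was already established earlier (in the proof of Lemma~\ref{lem: over ku} via Theorem~\ref{theorem: main}, and independently in Section~\ref{section: first stage}). Your write-up makes explicit the steps the paper leaves implicit.
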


\section{On the rationalization and $p$-localization of $\cM$}\label{section: localizations}
Let  $\cC$ be a stable presentable closed symmetric monoidal $\infty$-category. Denote by $\otimes$ the tensor product, by $1_\cC$ the unit and by $\underline{\Hom}(\bullet,\bullet)$ the internal hom.
For every $n\in \mathbb{Z}$ and an object $X \in \cC$ there is a natural multiplication by $n$ map $[n]\colon X \to X$. We will say that an object $X \in \cC$ is \emph{rational}
if for every $n\neq 0$ the map $[n]\colon X \to X$ is an isomorphism. Simlarly, we will say that $X$ is \emph{$p$-local} for a prime $p$, if $[n]\colon X \to X$ is an isomorphism for every $n$ that is not divisible by $p$. We denote the collection of rational objects in $\cC$ by $\cC_{\mathbb{Q}}$ and the collection of $p$-local objects by $\cC_{(p)}$. If $\cC=\cC_{\QQ}$ (resp. $\cC=\cC_{(p)}$) then we say that $\cC$ is rational (resp. $p$-local). The naturality of $[n]$ implies that $\cC_{\mathbb{Q}}$ and $\cC_{(p)}$ are closed in $\cC$ under all small limits and colimits and that for every $X\in \cC$ and $Y\in \cC_{\mathbb{Q}}$ (resp. $Y\in \cC_{(p)}$) we have $X\otimes Y, \underline{\Hom}(X,Y) \in \cC_{\mathbb{Q}}$ (resp. $X\otimes Y, \underline{\Hom}(X,Y) \in \cC_{(p)}$). We thus get that $\cC_{\mathbb{Q}}$ and $\cC_{(p)}$ are themselves stable presentable closed symmetric monoidal $\infty$-categories.
Further the inclusion $$i^{\cC}_\mathbb{Q}:\cC_\mathbb{Q} \subset \cC$$ admits a symmetric monoidal left adjoint called \emph{rationalization}
$$L^{\cC}_{\mathbb{Q}}\colon \cC \to \cC_{\mathbb{Q}}.$$
Same holds for $p$-localization.

Further, the left adjoints are given by the following formulas
$$L_{\mathbb{Q}}(X) = L_{\mathbb{Q}}(1_\cC)\otimes X = \colim \left[  X \xrightarrow{[1]}X \xrightarrow{[2]}X \xrightarrow{[3]}X \cdots \right]$$
$$L_{(p)}(X) = L_{{p}}(1_\cC)\otimes X = \colim \left[  X \xrightarrow{[p'_1]}X \xrightarrow{[p'_2]}X \xrightarrow{[p'_3]}X \cdots \right]$$
where $p'_1, p'_2, \ldots$ is the list of integers not divisible by $p$.

Since $\NSp$ is left-tensored over $\Sp$, we have that $\NSp_{\mathbb{Q}}$ (resp. $\NSp_{(p)}$) is left-tensored over $\Sp_{\mathbb{Q}}$ (resp. $\Sp_{(p)}$).
Let ${\cM^{\mathbb{Q}}}$ (resp. ${\cM^{(p)}}$) to be the full $\Sp_{\mathbb{Q}}$-enriched (resp. $\Sp_{(p)}$-enriched) subcategory of $\NSp_{\mathbb{Q}}$ (resp. $\NSp_{(p)}$) spanned by $$L^{\NSp}_{\mathbb{Q}}(\Sigma^\infty_{\NC} M_n) \quad \left(\mbox{ resp. } L^{\NSp}_{(p)}(\Sigma^\infty_{\NC} M_n)\right)$$ for  $n \in \mathbb{N}$.
\begin{lem}\label{l:M^Q}
For all $k,l\in \mathbb{N}$ there are equivalences
\[
\Hom_{{\cM^{\mathbb{Q}}}}(L^{\NSp}_{\mathbb{Q}}(\Sigma^\infty_{\NC} M_k),L^{\NSp}_{\mathbb{Q}}(\Sigma^\infty_{\NC} M_l)) \simeq L^{\Sp}_{\mathbb{Q}}\Hom_{\NSp} ( \Sigma^\infty_{\NC} M_k, \Sigma^\infty_{\NC} M_l)
\]
and
\[
\Hom_{{\cM^{(p)}}}(L^{\NSp}_{(p)}(\Sigma^\infty_{\NC} M_k),L^{\NSp}_{(p)}(\Sigma^\infty_{\NC} M_l)) \simeq L^{\Sp}_{(p)}\Hom_{\NSp} ( \Sigma^\infty_{\NC} M_k, \Sigma^\infty_{\NC} M_l)
\]
\end{lem}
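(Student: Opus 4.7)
The plan is to combine three ingredients: the adjunction defining $\NSp_{\mathbb Q}$ as a reflective subcategory of $\NSp$, the fact (from \cite{ABS1}) that the objects $\Sigma^\infty_{\NC} M_k$ are compact in $\NSp$, and the explicit colimit formula for $L^{\NSp}_{\mathbb Q}$ recorded at the start of this section. The $p$-local statement follows by the same argument after replacing the sequence $1,2,3,\ldots$ by the sequence $p_1', p_2', \ldots$ of positive integers prime to $p$, so I focus on the rational case and abbreviate $X = \Sigma^\infty_{\NC} M_k$, $Y = \Sigma^\infty_{\NC} M_l$.

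The first step is to use the adjunction $L^{\NSp}_{\mathbb Q} \dashv i^{\NSp}_{\mathbb Q}$ together with the fully faithful inclusion of $\cM^{\mathbb Q}$ into $\NSp_{\mathbb Q}$ to reduce the claim to the assertion
\[
\Hom_{\NSp}\bigl(X,\; i^{\NSp}_{\mathbb Q} L^{\NSp}_{\mathbb Q} Y\bigr) \;\simeq\; L^{\Sp}_{\mathbb Q}\Hom_{\NSp}(X, Y).
\]
This reduction requires upgrading the adjunction from mapping spaces to $\Sp$-enriched mapping spectra, and observing that the left-hand side $\Hom_{\cM^{\mathbb Q}}$, a priori an object of $\Sp_{\mathbb Q}$, coincides with the $\Sp$-valued hom of $\NSp$ (which is automatically rational whenever the target is, as is the case for $i^{\NSp}_{\mathbb Q} L^{\NSp}_{\mathbb Q} Y$). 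Both points follow from the fact that $L^{\NSp}_{\mathbb Q}$ is smashing, $L^{\NSp}_{\mathbb Q}(Z) \simeq L^{\NSp}_{\mathbb Q}(1)\otimes Z$, so the reflection is $\Sp$-linear and $i^{\NSp}_{\mathbb Q}$ preserves colimits.

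For the second step I substitute the explicit colimit formula
\[
i^{\NSp}_{\mathbb Q} L^{\NSp}_{\mathbb Q} Y \;\simeq\; \colim\!\Bigl[\,Y \xrightarrow{[1]} Y \xrightarrow{[2]} Y \xrightarrow{[3]} \cdots\Bigr],
\]
interpreted as a filtered colimit in $\NSp$, and apply $\Hom_{\NSp}(X, -)$. Since $X$ is compact in $\NSp$, this functor commutes with the filtered colimit; since $\NSp$ is stable, the functor is additive, so it carries each map $[n]$ to multiplication by $n$ on the mapping spectrum. The resulting colimit
\[
\colim\!\Bigl[\,\Hom_{\NSp}(X,Y) \xrightarrow{[1]} \Hom_{\NSp}(X,Y) \xrightarrow{[2]} \cdots\Bigr]
\]
is, by the very same formula applied in $\Sp$, a model for $L^{\Sp}_{\mathbb Q}\Hom_{\NSp}(X,Y)$, closing the argument.

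The main obstacle I expect is bookkeeping rather than genuine mathematics: carefully checking that the reflective adjunction and the colimit formula lift to the $\Sp$-enriched setting, and tracking the identification of the hom in the full enriched subcategory $\cM^{\mathbb Q}$ with the ambient hom in $\NSp_{\mathbb Q}$ (and thence with $\Hom_{\NSp}$ restricted to rational targets). All of this is routine once one unwinds the smashing property, so no deeper input is needed beyond compactness of the matrix-algebra generators.
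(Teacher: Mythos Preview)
Your proposal is correct and follows essentially the same route as the paper: reduce via the full inclusion $\cM^{\mathbb Q}\subset\NSp_{\mathbb Q}$ and the adjunction $L^{\NSp}_{\mathbb Q}\dashv i^{\NSp}_{\mathbb Q}$ to $\Hom_{\NSp}(X,L^{\NSp}_{\mathbb Q}Y)$, plug in the sequential colimit formula for $L^{\NSp}_{\mathbb Q}$, pull the colimit out using compactness of $X=\Sigma^\infty_{\NC}M_k$, and recognize the result as $L^{\Sp}_{\mathbb Q}\Hom_{\NSp}(X,Y)$. Your additional remarks about lifting the adjunction to the $\Sp$-enriched level via the smashing property are a welcome elaboration of points the paper leaves implicit.
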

\begin{proof}
We will go over the (very straightforward) proof of the rational case. The proof of the $p$-local case is practically identical.
\begin{multline*}
\Hom_{{\cM^{\mathbb{Q}}}}(L^{\NSp}_{\mathbb{Q}}(\Sigma^\infty_{\NC} M_k),L^{\NSp}_{\mathbb{Q}}(\Sigma^\infty_{\NC} M_l))
=\Hom_{\NSp_{\mathbb{Q}}}(L^{\NSp}_{\mathbb{Q}}(\Sigma^\infty_{\NC} M_k),L^{\NSp}_{\mathbb{Q}}(\Sigma^\infty_{\NC} M_l)) = \\ =\Hom_{\NSp}(\Sigma^\infty_{\NC} M_k,L^{\NSp}_{\mathbb{Q}}(\Sigma^\infty_{\NC} M_l))= \\=\Hom_{\NSp}(\Sigma^\infty_{\NC} M_k,\colim \left[  \Sigma^\infty_{\NC} M_l \xrightarrow{[1]}\Sigma^\infty_{\NC} M_l \xrightarrow{[2]}  \Sigma^\infty_{\NC} M_l \xrightarrow{[3]} \cdots \right] ) =  \\ =\colim \left[  \Hom_{\NSp}(\Sigma^\infty_{\NC} M_k,\Sigma^\infty_{\NC} M_l)\xrightarrow{[1]}\Hom_{\NSp}(\Sigma^\infty_{\NC} M_k,\Sigma^\infty_{\NC} M_l)\xrightarrow{[2]} \right. \\ \left. \xrightarrow{[2]} \Hom_{\NSp}(\Sigma^\infty_{\NC} M_k,\Sigma^\infty_{\NC} M_l )\xrightarrow{[3]} \cdots \right]  = L^{\Sp}_{\mathbb{Q}}\Hom_{\NSp} ( \Sigma^\infty_{\NC} M_k, \Sigma^\infty_{\NC} M_l).
\end{multline*}
Here the first equality is by definition, the second equality is using the adjunction $L^{\NSp}_{\mathbb{Q}}\vdash i^{\NSp}_{\mathbb{Q}}$, the third equality is the formula for $L^{\NSp}_{\mathbb{Q}}$, the forth equality is by the compactness of $\Sigma^\infty_{\NC} M_k$ and the fifth equality uses the formula for $L^{\Sp}_{\mathbb{Q}}$.
\end{proof}
The main theorems of~\cite{ABS1} have rational and $p$-local analogs with completely analogous proofs.
\begin{thm}\label{t:modules monoida-rl}
Let $\cD$ be a symmetric monoidal cocomplete rational (resp. $p$-local) $\infty$-category. Suppose that there is a small set $C$ of compact objects in $\cD$, that generates $\cD$ under colimits and desuspentions. Assume that $1_\cD \in C$ and $C$ is closed under tensor product.  Thinking of $\cD$ as left-tensored over $\Sp_{\mathbb{Q}}$ (resp. $\Sp_{(p)}$), we let $\cC$ be the full $\Sp_{\mathbb{Q}}$-enriched (resp. $\Sp_{(p)}$-enriched) subcategory of $\cD$ spanned by $C$. Then we have a natural symmetric monoidal  functor of categories left-tensored over $\Sp_{\mathbb{Q}}$ (resp. $\Sp_{(p)}$)
  $$P_{\Sp_{\mathbb{Q}}}(\cC)\xrightarrow{\sim}\cD \quad \left(\mbox{resp. } P_{\Sp_{(p)}}(\cC)\xrightarrow{\sim}\cD\right),$$
  which is an equivalence of the underlying $\infty$-categories and sends each representable presheaf $Y(c)$ to $c\in C$.
\end{thm}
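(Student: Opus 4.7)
The plan is to follow the proof of the corresponding unlocalized theorem of \cite{ABS1} essentially verbatim, with $\Sp$ replaced by $\Sp_\QQ$ (resp.\ $\Sp_{(p)}$) throughout. The key observation that makes this transcription work is that $\Sp_\QQ$ and $\Sp_{(p)}$ are themselves presentable stable symmetric monoidal $\infty$-categories, so Hinich's theory of enriched presheaves yields a presentable $\Sp_\QQ$-enriched $\infty$-category $P_{\Sp_\QQ}(\cC)$ equipped with a Day convolution symmetric monoidal structure built from the symmetric monoidal structure on $\cC$ inherited from $\cD$. Moreover, since $\cD$ is rational stable presentable, it is canonically $\Sp_\QQ$-enriched and $\Sp_\QQ$-tensored.

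First I would construct the comparison functor. The inclusion $\iota\colon\cC\hookrightarrow\cD$ is a symmetric monoidal $\Sp_\QQ$-enriched functor because $1_\cD\in C$ and $C$ is closed under $\otimes$. The universal property of $P_{\Sp_\QQ}(\cC)$ as the free $\Sp_\QQ$-enriched cocompletion of $\cC$ (in the symmetric monoidal sense) produces a unique colimit-preserving symmetric monoidal $\Sp_\QQ$-enriched functor
\[
F\colon P_{\Sp_\QQ}(\cC)\to \cD
\]
with $F\circ Y\simeq \iota$, where $Y$ is the enriched Yoneda embedding. By construction $F$ sends each representable $Y(c)$ to $c$.

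Next I would verify that $F$ is fully faithful. On representables this is the enriched Yoneda lemma: $\Map_{P_{\Sp_\QQ}(\cC)}(Y(c),Y(c'))\simeq \Hom_{\cC}(c,c')\simeq \Hom_{\cD}(c,c')$. To extend this to arbitrary objects, write $X\simeq\colim_\alpha Y(c_\alpha)$ in $P_{\Sp_\QQ}(\cC)$. For any $W\in P_{\Sp_\QQ}(\cC)$, express $W\simeq\colim_\beta Y(c_\beta)$ as a colimit of representables and compute, using that $c_\alpha$ is compact in $\cD$ (so $\Hom_\cD(c_\alpha,-)$ preserves arbitrary colimits in the stable setting) and that mapping spectra send colimits in the first variable to limits:
\[
\Map(X,W)\simeq \lim_\alpha\colim_\beta \Map(Y(c_\alpha),Y(c_\beta))\simeq \lim_\alpha\colim_\beta \Hom_{\cD}(c_\alpha,c_\beta)\simeq \Map_\cD(FX,FW).
\]
For essential surjectivity, the essential image of $F$ contains $C$, is closed under colimits (because $F$ is cocontinuous), and is closed under desuspensions (because $F$ preserves finite limits, being a cocontinuous functor between stable $\infty$-categories). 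By the generating hypothesis on $C$, this essential image must be all of $\cD$.

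The only substantive obstacle is the bookkeeping required to make sure that the enriched universal property and Day convolution construction really do go through for $\cV=\Sp_\QQ$ and $\cV=\Sp_{(p)}$ in place of $\cV=\Sp$. This is where Lemma \ref{l:M^Q} plays an implicit role by confirming that the enrichment $\Hom_{\cM^\QQ}$ is genuinely the rationalization of $\Hom_{\cM}$, so that nothing in the localized setting behaves differently from the unlocalized case. Once this general framework is in place, every step of the argument of \cite{ABS1} applies word-for-word, which is why the authors are justified in saying the proofs are completely analogous.
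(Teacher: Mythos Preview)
Your approach is correct and is exactly what the paper intends: the authors do not give a separate proof of this theorem, stating only that ``the main theorems of~\cite{ABS1} have rational and $p$-local analogs with completely analogous proofs,'' and your sketch is a reasonable outline of that transcription.

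One small correction: your invocation of Lemma~\ref{l:M^Q} is misplaced. That lemma concerns the specific category $\cM^{\mathbb Q}\subset\NSp_{\mathbb Q}$ and is used in the \emph{next} theorem (Theorem~\ref{theorem: rational main presentation}) to verify the hypotheses in that particular case; it plays no role in the proof of the present general statement about an arbitrary rational $\cD$. Also, be careful with the phrase ``$\Hom_\cD(c_\alpha,-)$ preserves arbitrary colimits in the stable setting'': compactness gives preservation of filtered colimits, and the extension to all colimits uses that in a stable category every colimit decomposes into a finite colimit followed by a filtered one, together with the fact that mapping spectra are exact in each variable.
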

\begin{thm}\label{theorem: rational main presentation}
The $\Sp_{\mathbb{Q}}$-enriched (resp. $\Sp_{(p)}$-enriched) category ${\cM^{\mathbb{Q}}}$ (resp. ${\cM^{(p)}}$) acquires a canonical symmetric monoidal structure, the category of presheaves $P_{\Sp_\mathbb{Q}} ({\cM^{\mathbb{Q}}})$ (resp. $P_{\Sp_{(p)}} ({\cM^{(p)}})$) acquires a canonical symmetric monoidal left $\Sp_\mathbb{Q}$-tensored (resp. $\Sp_{(p)}$-tensored) structure and we have a natural symmetric monoidal left $\Sp_\mathbb{Q}$-tensored (resp. $\Sp_{(p)}$-tensored) functor
$$P_{\Sp_\mathbb{Q}}({\cM^{\mathbb{Q}}})\xrightarrow{\sim}\NSp_\mathbb{Q} \quad \left(\mbox{resp. }, P_{\Sp_{(p)}}({\cM^{(p)}})\xrightarrow{\sim}\NSp_{(p)}\right)$$
which is an equivalence of the underlying $\infty$-categories.
\end{thm}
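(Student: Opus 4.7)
My plan is to deduce Theorem~\ref{theorem: rational main presentation} as a direct application of the rational (resp. $p$-local) analog Theorem~\ref{t:modules monoida-rl}, taking $\cD=\NSp_{\mathbb Q}$ (resp. $\NSp_{(p)}$) and $C=\{L^{\NSp}_{\mathbb Q}(\Sigma^\infty_{\NC}M_n)\mid n\geq 1\}$ (resp. the $p$-local analog). By construction ${\cM^{\mathbb Q}}$ (resp. ${\cM^{(p)}}$) is precisely the full $\Sp_{\mathbb Q}$-enriched (resp. $\Sp_{(p)}$-enriched) subcategory of $\cD$ spanned by $C$, so the conclusion of Theorem~\ref{t:modules monoida-rl} is exactly the desired statement. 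I only have to verify the hypotheses.

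First, $\NSp_{\mathbb Q}$ is a stable presentable rational closed symmetric monoidal $\infty$-category: it is stable and presentable as a smashing localization of the stable presentable $\NSp$, its symmetric monoidal structure is inherited from $\NSp$ via the symmetric monoidal left adjoint $L^{\NSp}_{\mathbb Q}$, and it is rational by construction. Next, the set $C$ contains the unit, because $L^{\NSp}_{\mathbb Q}$ is symmetric monoidal and $\Sigma^\infty_{\NC}M_1$ is the unit of $\NSp$. It is closed under tensor product, because the tensor product on $\NSp$ satisfies $\Sigma^\infty_{\NC}M_k\otimes \Sigma^\infty_{\NC}M_l\simeq \Sigma^\infty_{\NC}M_{kl}$ (shown in~\cite{ABS1}) and $L^{\NSp}_{\mathbb Q}$ is symmetric monoidal, hence
\[
L^{\NSp}_{\mathbb Q}(\Sigma^\infty_{\NC}M_k)\otimes L^{\NSp}_{\mathbb Q}(\Sigma^\infty_{\NC}M_l)\simeq L^{\NSp}_{\mathbb Q}(\Sigma^\infty_{\NC}M_{kl}).
\]

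The remaining point, which I expect to be the main technical obstacle, is that the objects of $C$ are compact in $\NSp_{\mathbb Q}$ and generate it under colimits and desuspensions. For compactness, the right adjoint $i^{\NSp}_{\mathbb Q}\colon \NSp_{\mathbb Q}\hookrightarrow \NSp$ preserves filtered colimits, since $\NSp_{\mathbb Q}$ is closed in $\NSp$ under all small colimits (as noted at the start of Section~\ref{section: localizations}); it follows formally that its left adjoint $L^{\NSp}_{\mathbb Q}$ preserves compact objects. Since $\Sigma^\infty_{\NC}M_n$ is compact in $\NSp$ by the main results of~\cite{ABS1}, its image $L^{\NSp}_{\mathbb Q}(\Sigma^\infty_{\NC}M_n)$ is compact in $\NSp_{\mathbb Q}$. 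For generation, given $Y\in \NSp_{\mathbb Q}$, view it in $\NSp$ via $i^{\NSp}_{\mathbb Q}$ and write it there as a colimit of desuspensions of the $\Sigma^\infty_{\NC}M_n$'s, using that these form compact generators of $\NSp$. Applying $L^{\NSp}_{\mathbb Q}$ and using $L^{\NSp}_{\mathbb Q}\circ i^{\NSp}_{\mathbb Q}\simeq \id$ together with preservation of colimits and desuspensions by $L^{\NSp}_{\mathbb Q}$ expresses $Y$ as a colimit of desuspensions of objects of $C$. The $p$-local case is identical, replacing $\mathbb Q$ by $(p)$ throughout. Having verified all hypotheses, Theorem~\ref{t:modules monoida-rl} supplies the canonical symmetric monoidal structures on ${\cM^{\mathbb Q}}$ and $P_{\Sp_{\mathbb Q}}({\cM^{\mathbb Q}})$ and the desired symmetric monoidal $\Sp_{\mathbb Q}$-tensored equivalence $P_{\Sp_{\mathbb Q}}({\cM^{\mathbb Q}})\xrightarrow{\sim}\NSp_{\mathbb Q}$.
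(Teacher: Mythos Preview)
Your proposal is correct and is exactly the argument the paper intends: the paper states Theorem~\ref{theorem: rational main presentation} (together with Theorem~\ref{t:modules monoida-rl}) as rational and $p$-local analogs of the main theorems of~\cite{ABS1} ``with completely analogous proofs'', meaning precisely that one applies Theorem~\ref{t:modules monoida-rl} with $\cD=\NSp_{\mathbb Q}$ (resp.\ $\NSp_{(p)}$) and $C=\{L^{\NSp}_{\mathbb Q}(\Sigma^\infty_{\NC}M_n)\}$ (resp.\ the $p$-local analog). Your verification of the hypotheses---in particular compactness via the observation that $\NSp_{\mathbb Q}$ is closed under colimits in $\NSp$, and generation by localizing a presentation in $\NSp$---spells out the details the paper leaves implicit.
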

\subsubsection*{An explicit presentation of ${\cM^{\mathbb{Q}}}$ and $\NSp_\mathbb{Q} $}
We can use our results to give a very explicit description of  ${\cM^{\mathbb{Q}}}$, and therefore of the noncommutative rational stable homotopy category.

The rationalization functor $L^{\Sp}_{\mathbb{Q}}\colon \Sp \to \Sp_{\mathbb{Q}}$ is a symmetric monoidal left adjoint. As explained in Section~\ref{section: first stage}, we have an induced adjunction
$$(L^{\Sp}_{\mathbb{Q}})_! \colon \Cat^{\otimes}_{\Sp}  \leftrightarrows  \Cat^{\otimes}_{\Sp_\QQ} \noloc i_!.$$
Composing adjunctions we obtain
$$(L^{\Sp}_{\mathbb{Q}}\circ\Sigma^{\infty}_+)_! \colon \Cat^{\otimes}  \leftrightarrows  \Cat^{\otimes}_{\Sp_\QQ} \noloc (\Omega^{\infty}\circ i)_!.$$
We denote
$$\mathbb{P}\Inj_{\infty}^{\Sp_\QQ}:= (L^{\Sp}_{\mathbb{Q}}\circ\Sigma^{\infty}_+)_! (\mathbb{P}\Inj_{\infty}) \in \Cat^{\otimes}_{\Sp_\QQ}$$
and we denote the mate of $L^{\Sp}_{\QQ}\circ\Sigma^{\infty} \circ \widetilde{\End}\in \Map_{\Cat^{\otimes}} (\mathbb{P}\Inj_{\infty}^{\mathrm{op}}, \NSp_\QQ)$ under this adjunction by
$$\widetilde{\mathrm{E}}_{\QQ}\colon (\mathbb{P}\Inj_{\infty}^{\Sp_\QQ})^\mathrm{op} \to \NSp_\QQ \in \Cat^{\otimes}_{\Sp_\QQ}.$$



\begin{prop}\label{p:rat}
The functor
$$\widetilde{\mathrm{E}}_{\QQ}\colon (\mathbb{P}\Inj_{\infty}^{\Sp_\QQ})^\mathrm{op} \to \NSp_\QQ \in \Cat^{\otimes}_{\Sp_\QQ}$$
is fully faithful as an $\Sp_\QQ$-enriched functor  with essential image $$\cM^{\mathbb{Q}} \subseteq \NSp_\QQ.$$
\end{prop}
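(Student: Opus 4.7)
The statement has two parts: identifying the essential image, and showing full faithfulness at the level of $\Sp_\QQ$-enriched mapping spectra.

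For the essential image, the plan is to unwind the construction of $\widetilde{\mathrm{E}}_\QQ$. By the adjunction $(L_\QQ^{\Sp}\circ\Sigma^\infty_+)_!\dashv (\Omega^\infty\circ i)_!$, the functor $\widetilde{\mathrm{E}}_\QQ$ has the same effect on objects as the underlying functor $\mathbb{P}\Inj_\infty^{\op}\to\NSp_\QQ$, namely $\widetilde{\mathrm{E}}_\QQ(\CC^k)\simeq L^{\NSp}_\QQ\Sigma^\infty_{\NC}M_k$ for every $k\ge 1$. Since the objects of $\mathbb{P}\Inj$ are, up to isomorphism, exactly the $\CC^k$ for $k\ge 1$, the essential image is precisely the collection of generators of $\cM^\QQ$, which is the full $\Sp_\QQ$-enriched subcategory $\cM^\QQ\subseteq\NSp_\QQ$.

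For full faithfulness, fix $k,l\ge 1$ and examine the map on mapping spectra. On the source, using that $(L_\QQ^{\Sp}\circ\Sigma^\infty_+)_!$ is computed by post-composing hom spaces, we get
\[
\Map_{(\mathbb{P}\Inj_\infty^{\Sp_\QQ})^{\op}}(\CC^k,\CC^l)\simeq L_\QQ^{\Sp}\Sigma^\infty_+\mathbb{P}\Inj(\CC^l,\CC^k).
\]
On the target, by Lemma~\ref{l:M^Q},
\[
\Map_{\cM^\QQ}\!\bigl(L^{\NSp}_\QQ\Sigma^\infty_{\NC}M_k,\,L^{\NSp}_\QQ\Sigma^\infty_{\NC}M_l\bigr)\simeq L_\QQ^{\Sp}\SSS^{k,l}.
\]
By the same mate-construction and naturality of the adjunction, the induced map between these two spectra is the rationalization of the map obtained by following the outer square of Lemma~\ref{l:big}, namely the composition
\[
\Sigma^\infty_+\mathbb{P}\Inj(\CC^l,\CC^k)\;\simeq\;\Sigma^\infty G_{k,l,1}(S^0)\;\simeq\;\SSS^{k,l,1}\;\longrightarrow\;\SSS^{k,l}.
\]

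The last step is then to invoke Corollary~\ref{cor:rat}, which says that the canonical inclusion $\SSS^{k,l,1}\to\SSS^{k,l}$ is a rational equivalence. Applying $L_\QQ^{\Sp}$ to the displayed composition therefore yields an equivalence, giving full faithfulness on each mapping spectrum.

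The only substantive step is verifying that the map induced by $\widetilde{\mathrm{E}}_\QQ$ on enriched mapping spectra is indeed the rationalization of the map from the outer square in Lemma~\ref{l:big}; this is a formal consequence of how mates along $(L_\QQ^{\Sp}\circ\Sigma^\infty_+)_!\dashv(\Omega^\infty\circ i)_!$ act on hom objects, combined with the commutativity already established in Lemma~\ref{l:big}. All other steps are direct applications of results already in hand: Lemma~\ref{l:pinj} and Section~\ref{section: first stage} for the identification $\Sigma^\infty_+\mathbb{P}\Inj(\CC^l,\CC^k)\simeq\SSS^{k,l,1}$, Lemma~\ref{l:M^Q} for the mapping spectra in $\cM^\QQ$, and Corollary~\ref{cor:rat} for the rational equivalence.
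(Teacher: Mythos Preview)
Your proposal is correct and follows the same approach as the paper: the essential image is handled by noting that the mate construction does not change the underlying functor on objects, and full faithfulness is deduced by combining Lemma~\ref{l:big} (to identify the induced map on mapping spectra with $\SSS^{k,l,1}\to\SSS^{k,l}$) with Corollary~\ref{cor:rat} (to see this map is a rational equivalence). You have simply spelled out in detail what the paper's two-line proof leaves implicit, including the invocation of Lemma~\ref{l:M^Q} and the identification from Section~\ref{section: first stage}.
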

\begin{proof}
The statement about the  essential image is not effected by changing enrichment and is thus  clear from the description  of the functor $\End$.
The fully-faithfulness follows from lemma \ref{l:big} and corollary \ref{cor:rat}.
\end{proof}

In view of theorem \ref{theorem: rational main presentation}  and  proposition \ref{p:rat}.
We get the following result:
\begin{thm}\label{t:rational spectra}
We have a sequence of equivalences of symmetric monoidal $\infty$-categories $$\mathrm{Fun}(\mathbb{P}\Inj_{\infty}, \Sp_{\mathbb{\QQ}}) \cong P_{\Sp_\mathbb{Q}}((\mathbb{P}\Inj_{\infty}^{\Sp_\QQ})^{\mathrm{op}})
\cong \NSp_{\QQ}.$$
\end{thm}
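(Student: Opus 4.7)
The plan is to assemble both equivalences from the infrastructure already in place. The right-hand equivalence $P_{\Sp_\QQ}((\mathbb{P}\Inj_\infty^{\Sp_\QQ})^{\op}) \simeq \NSp_\QQ$ follows almost formally: Proposition~\ref{p:rat} provides an equivalence of symmetric monoidal $\Sp_\QQ$-enriched $\infty$-categories between $(\mathbb{P}\Inj_\infty^{\Sp_\QQ})^{\op}$ and $\cM^{\mathbb{Q}}$ (via $\widetilde{\mathrm{E}}_\QQ$), and Theorem~\ref{theorem: rational main presentation} then identifies $P_{\Sp_\QQ}(\cM^{\mathbb{Q}})$ with $\NSp_\QQ$ as symmetric monoidal $\infty$-categories. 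Chaining these, using the functoriality of the symmetric monoidal presheaf construction $P_{\Sp_\QQ}(-)$ under equivalences of symmetric monoidal $\Sp_\QQ$-enriched $\infty$-categories, yields the claim.

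For the left-hand equivalence $\mathrm{Fun}(\mathbb{P}\Inj_\infty, \Sp_\QQ) \simeq P_{\Sp_\QQ}((\mathbb{P}\Inj_\infty^{\Sp_\QQ})^{\op})$, I would apply the change-of-enrichment adjunction $(L^{\Sp}_{\QQ}\circ \Sigma^\infty_+)_! \dashv (\Omega^\infty \circ i)_!$ already recalled in Section~\ref{section: first stage}. Unwinding the definition of presheaves, the right-hand side is $\mathrm{Fun}_{\Sp_\QQ}(\mathbb{P}\Inj_\infty^{\Sp_\QQ}, \Sp_\QQ)$; since by construction $\mathbb{P}\Inj_\infty^{\Sp_\QQ} = (L^{\Sp}_{\QQ}\circ \Sigma^\infty_+)_!(\mathbb{P}\Inj_\infty)$, the adjunction provides a natural equivalence
\begin{equation*}
\mathrm{Fun}_{\Sp_\QQ}(\mathbb{P}\Inj_\infty^{\Sp_\QQ}, \Sp_\QQ) \simeq \mathrm{Fun}(\mathbb{P}\Inj_\infty, (\Omega^\infty \circ i)_!(\Sp_\QQ)).
\end{equation*}
The $\cS$-enriched category $(\Omega^\infty \circ i)_!(\Sp_\QQ)$ is the underlying $\infty$-category of $\Sp_\QQ$, i.e.\ $\Sp_\QQ$ itself, producing the desired equivalence.

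The step that requires the most care is the symmetric monoidal compatibility. Both functor $\infty$-categories carry Day convolution structures inherited from the symmetric monoidal structure of $\mathbb{P}\Inj_\infty$ (or its $\Sp_\QQ$-enriched version), and each of the two equivalences must respect these. For the right equivalence the compatibility is immediate, since $\widetilde{\mathrm{E}}_\QQ$ is symmetric monoidal by Proposition~\ref{p:rat} and Theorem~\ref{theorem: rational main presentation} is itself a symmetric monoidal statement. For the left equivalence I would invoke the symmetric monoidal version of the change-of-enrichment adjunction on $\Cat^{\otimes}$ and $\Cat^{\otimes}_{\Sp_\QQ}$ already recorded in Section~\ref{section: first stage}, together with the compatibility of Day convolution with symmetric monoidal change of enrichment. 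This last compatibility is the principal technical ingredient; it is subsumed by Hinich's framework for enriched symmetric monoidal $\infty$-categories referenced in the introduction.
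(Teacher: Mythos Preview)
Your proof is correct and follows essentially the same route as the paper: the right-hand equivalence is obtained by combining Proposition~\ref{p:rat} with Theorem~\ref{theorem: rational main presentation}, and the left-hand equivalence is obtained by unwinding $P_{\Sp_\QQ}$ as enriched functors and applying the change-of-enrichment adjunction $(L^{\Sp}_{\QQ}\circ\Sigma^{\infty}_+)_! \dashv (\Omega^{\infty}\circ i)_!$. Your explicit discussion of the symmetric monoidal compatibility is additional care that the paper's proof leaves implicit.
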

\begin{proof}
The second equivalence   is an immediate corollary  of Theorem \ref{theorem: rational main presentation}  and  Proposition \ref{p:rat}. For the first equivalence note that
$$P_{\Sp_\mathbb{Q}}((\mathbb{P}\Inj_{\infty}^{\Sp_\QQ})^{\mathrm{op}}) = \mathrm{Fun}_{\Sp_\QQ}(\mathbb{P}\Inj_{\infty}^{\Sp_\QQ}, \Sp_{\mathbb{\QQ}}),$$
where $\mathrm{Fun}_{\Sp_\QQ}$ stands for $\Sp_\QQ$-enriched functors.
But we have an induced adjunction
$$(L^{\Sp}_{\mathbb{Q}}\circ\Sigma^{\infty}_+)_! \colon \Cat\leftrightarrows  \Cat_{\Sp_\QQ} \noloc (\Omega^{\infty}\circ i)_!,$$
so we have natural equivalences
$$\Fun_{\Sp_\QQ} (\mathbb{P}\Inj^{\Sp_\QQ}_{\infty},\Sp_\QQ)\simeq \Fun_{\Sp_\QQ}((L^{\Sp}_{\mathbb{Q}}\circ \Sigma^{\infty}_+)_! (\mathbb{P}\Inj_{\infty}),\Sp_\QQ)\simeq $$ $$\Fun(\mathbb{P}\Inj_{\infty}, (\Omega^{\infty}\circ i)_!\Sp_\QQ)\simeq \Fun(\mathbb{P}\Inj_{\infty}, \Sp_\QQ),$$
and are done.
\end{proof}

\subsubsection*{$p$-local and chromatic picture}
Now instead of rationalizing, suppose we fix a prime $p$ and localize everything at $p$. One can obtain further information about the $p$-localization of $\cM$. It follows from Proposition~\ref{prop: Lm facts intro} parts~\eqref{prime powers} and~\eqref{p-local} that the rank filtration of ${\cM^{(p)}}$ is  constant except at powers of $p$. Therefore it is natural to regrade the filtration of $\SSS^{k,l}$ as follows
\[
\SSS^{k,l,1} \hookrightarrow \SSS^{k,l,p} \hookrightarrow \SSS^{k,l,p^2}\hookrightarrow \cdots \hookrightarrow \SSS^{k,l,p^i}\cdots
\]
Let us loosely refer to $\SSS^{k,l,p^i}$ as morphisms of filtration $i$.
We can say that the $p$-localization of $\cM$ is a filtered category in the sense that the composition of a morphism of filtration $i$ and a morphism of filtration $j$ has filtration $i+j$. With this grading, (the $p$-localization) of $\cM$ is a graded category in the usual sense, that composition adds degrees.

Lastly, let us mention that the last part of Proposition~\ref{proposition: L_m facts} implies the following
\begin{cor}
Fix a prime $p$ and localize everything at $p$. The map
\[
\SSS^{k,l, p^n}\to \SSS^{k,l}
\]
induces an isomorphism on Morava $K(i)$-theory for $i\le n$.
\end{cor}
We wonder if one can use this lemma to say something interesting about the chromatic localization of $\NSp$, but we will not pursue this here.

\end{document}